\apptocmd{\sloppy}{\hbadness 10000\relax}{}{} 
\newtheorem{theorem}{Theorem}[section]
\newtheorem{corollary}[theorem]{Corollary}
\newtheorem{proposition}[theorem]{Proposition}
\newtheorem{lemma}[theorem]{Lemma}
\theoremstyle{definition}
\theoremstyle{remark}
\newtheorem{remark}{Remark}
\numberwithin{equation}{section}
\crefname{figure}{Figure}{Figures}
\theoremstyle{plain}
\newtheorem*{theorem*}{Theorem}
\crefname{theorems}{Theorem}{Theorems}
\crefname{corollaries}{Corollary}{Corollaries}
\newtheorem*{corollary*}{Corollary}
\crefname{corollaries*}{Corollary}{Corollaries}
\crefname{lemma}{Lemma}{Lemmata}
\crefname{proposition}{Proposition}{Propositions}
\crefname{conjectures}{Conjecture}{Conjectures}
\newtheorem*{conjonjecture*}{Conjecture}
\crefname{conjonjectures*}{Conjecture}{Conjectures}
\crefname{definitions}{Definition}{Definitions}
\crefname{hypotheses}{Hypothesis}{Hypotheses}
\def\im{\operatorname{Im}}
\def\re{\operatorname{Re}}
\def\add{\text{add}}
\def\Aut{\operatorname{Aut}}
\def\det{\operatorname{det}}
\newcommand{\A}{{\mathbb A}}
\newcommand{\Q}{{\mathbb Q}}
\newcommand{\Z}{{\mathbb Z}}
\newcommand{\CC}{{\mathbb C}}
\newcommand{\F}{{\mathbb F}}
\newcommand{\p}{\mathfrak p}
\newcommand{\GL}{{\rm GL}}
\newcommand{\SL}{{\rm SL}}
\newcommand{\rad}{{\rm rad}}
\newcommand{\tr}{{\rm tr}}
\newcommand{\Frob}{{\rm Frob}}
\def\p#1{\left( #1 \right)}
\def\kronecker#1#2{\p{\frac{#1}{#2}}}
\newcommand{\Gal}{{\rm Gal}}
\newcommand{\Hom}{{\rm Hom}}
\newcommand{\diag}{{\rm diag}}
\newcommand{\RN}[1]{%
  \textup{\uppercase\expandafter{\romannumeral#1}}%
}
\newcommand{\ol}[1]{\overline{#1}}
\newcommand{\forget}[1]{}
\def\qdots{\mathinner{\mkern1mu\raise0pt\vbox{\kern7pt\hbox{.}}\mkern2mu
\raise3.4pt\hbox{.}\mkern2mu\raise7pt\hbox{.}\mkern1mu}}
\newcommand\appendix@section[1]{%
	\refstepcounter{section}%
	\orig@section*{Appendix \@Alph\c@section: #1}%
	\addcontentsline{toc}{section}{Appendix \@Alph\c@section: #1}%
}
\g@addto@macro\appendix{\let\section\appendix@section}
\let\orig@section\section
\def\thickhline{%
  \noalign{\ifnum0=`}\fi\hrule \@height \thickarrayrulewidth \futurelet
   \reserved@a\@xthickhline}
\def\@xthickhline{\ifx\reserved@a\thickhline
               \vskip\doublerulesep
               \vskip-\thickarrayrulewidth
             \fi
      \ifnum0=`{\fi}}
\title{Effective open image theorem and a Linnik type problem for elliptic curves}
\author[Wang]{Tian Wang}
\address{Department of Mathematics \&
Statistic, Concordia University, Montreal, Canada}
\email{tian.wang@concordia.ca}
\author[Wei]{Zhining Wei}
\address{Department of Mathematics, Brown University, Providence, USA}
\email{zhining\_wei@brown.edu}
\date{}
\begin{document}
\maketitle

\begin{abstract}
 We study an effective open image theorem for families of elliptic curves and products of elliptic curves  ordered by conductor. Unconditionally, we prove that for $100\%$ of pairs of elliptic curves, the largest prime $\ell$, for which the associated mod $\ell$ Galois representation fails to be surjective, is small. Additionally, for semistable families, our bound on $\ell$ is comparable to the result obtained under the Generalized Riemann Hypothesis. We reduce the problem to a Linnik type problem for modular forms and  apply the zero density estimates. This method, together with an analysis of the local Galois representations of elliptic curves, allows us to show similar results for single elliptic curves.

\end{abstract}

\section{Introduction}

Let $E/\Q$ be a non-CM elliptic curve with conductor $N_E$. For a prime number $\ell$, let $E[\ell]$ denote the $\ell$-torsion subgroup of $E(\overline{\Q})$ and $T_\ell(E)$ denote the $\ell$-adic Tate module of $E$. Upon fixing bases, we obtain the following isomorphisms of modules:  \footnote{Below, $\F_{\ell}$ denotes the finite field with $\ell$ elements.}
\[
E[\ell] \simeq_{\F_\ell} \F_\ell \oplus \F_\ell \quad \text{and} \quad T_\ell(E) \simeq_{\Z_\ell} \Z_\ell \oplus \Z_\ell.
\]
The absolute Galois group $\Gal(\overline{\Q}/\Q)$ acts coordinate-wise on elements of $E[\ell]$ and $T_\ell(E)$,  giving rise to the mod $\ell$ Galois representation and the  $\ell$-adic Galois representation of $E$, denoted respectively by
\[
\bar{\rho}_{E, \ell}\colon\Gal(\overline{\Q}/\Q) \longrightarrow \GL_2(\F_{\ell}) \quad \text{and} \quad
\rho_{E, \ell}\colon\Gal(\overline{\Q}/\Q) \longrightarrow \GL_2(\Z_{\ell}).
\]
For a prime $p\nmid N_E$, we denote by  $\overline{E}_p(\F_p)$ the $\F_p$-points of the reduction of $E$ at $p$. The constant $a_p(E):=p+1-|\overline{E}_p(\F_p)|$ is referred to as the Frobenius trace of $E$ at $p$.
It is known that  
$\tr \rho_{E, \ell}(\Frob_p) = a_p(E)$ and $\det \rho_{E, \ell}(\Frob_p) = p$, where $\Frob_p \in \Gal(\overline{\Q}/\Q)$ is a Frobenius automorphism at $p$.

Serre's celebrated open image theorem \cite{Se1972} states that for any sufficiently large prime $\ell$, $\rho_{E,\ell}$ is surjective. For $\ell\geq 5$, the surjectivity of $\rho_{E,\ell}$ is equivalent to the surjectivity of $\bar{\rho}_{E,\ell}$. For this reason, we primarily work with $\bar{\rho}_{E,\ell}$ since it has finite image. We call $\ell$ a \textit{non-surjective prime} of $E$ if $\bar{\rho}_{E,\ell}$ is non-surjective. 

In this paper, we adopt the convention that $p$ and $\ell$ are primes and we assume $p\neq \ell$. 
Define 
\[
c(E):=\min\{\textup{$p$: if  $\ell>p$, then  $\ol{\rho}_{E,\ell}$ is surjective}\}.
\]
The famous Serre's  uniformity question asks whether $c(E)\leq 37$ for all non-CM elliptic curves.
Significant progress has been made toward answering this question. It is known that for  $\ell>37$, if $\ol{\rho}_{E, \ell}$ is non-surjective,  then the image is equal (up to conjugation) to the normalizer of a non-split Cartan subgroup of $\GL_2(\F_\ell)$ (see \cite{Se1972, Ma1978, Se1981, BiPaRe2013, MR3961086, FuLo2023}). Recently, Lemos \cite{Lemos2019, LemosII2019} showed that if $E$ has a non-trivial cyclic isogeny defined over $\Q$ or if   there is a prime $\ell$ such that the image of $\ol{\rho}_{E, \ell}$ is contained in  the normalizer of a  split Cartan subgroup of $\GL_2(\F_\ell)$, then $c(E)\leq 37$. 
Additionally, Mazur \cite{Ma1978} proved that if $E$ is  semistable (i.e., $N_{E}$ is squarefree), then $c(E)\leq 7$. 
However, for general non-CM elliptic curves $E$, much less is known about uniform bounds of $c(E)$.

On the other hand, there are several unconditional bounds of $c(E)$ in terms of the invariants of $E$.  The work of Masser and W\"{u}stholz \cite{MaWu1993} and Lombardo \cite[Corollary 9.3]{Lo2015} showed that there exists absolute and effectively computable constants $\Cl[abcon]{MasserWustholz1}>0$ and $\Cl[abcon]{MasserWustholz2}>0$ such that  $c(E) \leq \Cr{MasserWustholz1}\max\{1, h_{E}^{\Cr{MasserWustholz2}}\}$, where $h_E$ is the stable Faltings height of $E$. Kraus \cite{Kr1995} and Cojocaru \cite{Co2005} 
 proved that there is an explicit constant $\Cl[abcon]{KrCo}>0$ such that   
\[c(E) \leq \Cr{KrCo} \rad(N_E) (1 + \log \log \rad(N_E))^{1/2},\] where $\rad(n):=\prod_{p\mid n} p$.  

Additionally, much better upper bounds for $c(E)$ are known under the assumption of the Generalized Riemann Hypothesis (GRH) for Dedekind zeta functions. Under this assumption, we have  $c(E)\ll \log \rad(N_E)$, where the implied constant is explicit  (see, e.g., \cite{ChSw2024}).

Next, we consider the Galois representations for a pair of elliptic curves.   Let $E_1/\Q$ and $E_2/\Q$ be two non-CM elliptic curves that are not $\overline{\Q}$-isogenous. Similar as before, the action of the absolute Galois group $\Gal(\overline{\Q}/\Q)$ on the $\ell$-torsion group $(E_1\times E_2)[\ell]\simeq E_1[\ell]\times E_2[\ell]$ gives rise to the  
mod $\ell$ Galois representation
\[
\bar{\rho}_{E_1\times  E_2,\ell}:  \Gal(\overline{\Q}/\Q) \to  \GL_2 (\F_\ell) \times \GL_2(\F_\ell).
\]
Because of the Weil pairings, the image of  $\ol{\rho}_{E_1\times E_2, \ell}$ lies in the proper subgroup
\[
(\GL_2\times_{\det}\GL_2)(\F_\ell) := \{(M_1, M_2): M_1, M_2\in \GL_2(\F_\ell),  \det(M_1)  = \det(M_2)\}
\]
The open image theorem in this case (see  \cite[Th\'eor\`eme 6, p. 324]{Se1972} or \cite[Theorem 3.5]{Ri1975}) implies that the quantity
\[
c(E_1\times E_2)=\min\{\textup{$p$: if $\ell>p$, then $\bar{\rho}_{E_1\times E_2,\ell}$ is surjective (on $(\GL_2\times_{\det}\GL_2)(\F_\ell))$}\}
\] 
exists and is finite. 

Compared to the case of single elliptic curves, fewer results are known about  $c(E_1\times E_2)$. Unconditionally, Masser and W\"{u}stholz \cite[Proposition 1, p. 251] {MaWu1993} and Lombardo \cite[Theorem 1.1]{Lombardo2016}  proved that there exist absolute and effectively computable  constants $\Cl[abcon]{MWL1}>0$ and $\Cl[abcon]{MWL2}>0$ such that $c(E_1\times E_2)\leq \Cr{MWL1}\max\{1,h_{E_1}^{\Cr{MWL2}},h_{E_2}^{\Cr{MWL2}}\}$.  Murty \cite[Proof of Theorem 8]{Mu1999} showed that there exists an absolute and effective constant $\Cl[abcon]{Murty1}>0$ such that $c(E_1\times E_2)\leq (\max\{N_{E_1},N_{E_2}\})^{\Cr{Murty1}}$ without giving the constant explicitly.  Assuming GRH, Mayle and Wang  recently \cite{MaWa2023b} showed that $c(E_1\times E_2) \ll  
\log \rad (N_{E_1}N_{E_2})$, where the constant in $\ll$ is explicit. 

The main objective of this paper is to improve the unconditional upper bounds of $c(E)$ and $c(E_1\times E_2)$, aiming to approach the bounds known under GRH, particularly when working with families of elliptic curves.

\subsection{Main results}
We begin by presenting the result for products of elliptic curves. 

 Observing that $\Q$-isogenous elliptic curves have the same conductor, we define $\mathcal{E}(N)$ to be the set of $\Q$-isogeny classes  
of non-CM 
 elliptic curves  $E/\Q$ with conductor at most $N$.   
 Similarly, we introduce $\mathcal{E}^{ss}(N)\subseteq \mathcal{E}(N)$, the set  of $\Q$-isogenous classes of semistable elliptic curves  with conductor at most $N$. We have the following  lower and upper  bounds of  $|\mathcal{E}^{ss}(N)|$ and $|\mathcal{E}(N)|$:  \footnote{The bounds in  \eqref{eq:family-size} are, a priori, for the number of both  non-CM and CM elliptic curves.   We will show in the Appendix that 
 the set $\mathcal{E}^{CM}(N)$ of $\Q$-isogeny classes of CM elliptic curves over $\Q$, with conductor bounded by $N$, satisfies the bound $|\mathcal{E}^{CM}(N)|\ll N^{\frac{1}{2}}$. Hence, the set $\mathcal{E}^{CM}(N)$  is negligible in our problem.
 }
\begin{equation}\label{eq:family-size}
N^{\frac{5}{6}} \ll |\mathcal{E}^{ss}(N)| \leq |\mathcal{E}(N)| \ll N^{1+\varepsilon} \quad  \forall \varepsilon>0,
\end{equation}
which come from \cite[Eq. (1.12)]{FoNaTe1992} and \cite[Proposition 1]{DuKo2000}, respectively.

Our first main theorem is as follows. 
 
\begin{theorem}
\label{unconditional for two elliptic curves thm family}
Let $\mathcal{E}'(N)\subseteq \mathcal{E}(N)$ be any subfamily such that $|\mathcal{E}'(N)|\gg N^{\delta}$ with $\frac{1}{2}<\delta\leq 1$. Then, for any $\varepsilon>0$, 
\[
\lim_{N\to \infty}\frac{\#\left\{(E_1, E_2)\in \mathcal{E}'(N) \times \mathcal{E}'(N)\colon c(E_1\times E_2)\leq \max_{i\in \{1, 2\}}\left\{c(E_i),  4(\log N_{E_i})^{c(\delta)/2+\varepsilon}\right\}\right\}}{|\mathcal{E}'(N)|^2}=1,
\]
where $c(\delta)$ is defined in \eqref{eq. constant c}.\footnote{The least value for $c(\delta)$, when $\frac{1}{2}<\delta\leq \frac{5}{6}$, is $c(5/6)=5558$.}
In particular, if $\mathcal{E}'(N)\subseteq \mathcal{E}^{ss}(N)$, then 
\[
\lim_{N\to \infty}\frac{\#\left\{(E_1, E_2)\in \mathcal{E}'(N) \times \mathcal{E}'(N)\colon c(E_1\times E_2)\leq \max_{i\in \{1, 2\}}\left\{4(\log N_{E_i})^{c(\delta)/2+\varepsilon}\right\}\right\}}{|\mathcal{E}'(N)|^2}=1.
\]
\end{theorem}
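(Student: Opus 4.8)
We shall reduce the failure of surjectivity of $\bar\rho_{E_1\times E_2,\ell}$, for large $\ell$, to a congruence between Frobenius traces of $E_1$ and $E_2$, and then control, for almost every pair, the smallest prime witnessing the failure of that congruence by means of a Linnik-type theorem for modular forms proved with zero density estimates. \textbf{Step 1 (group theory).} Let $(E_1,E_2)\in\mathcal{E}'(N)^2$ not be $\overline{\Q}$-isogenous (the $\overline{\Q}$-isogenous pairs, negligible, are dealt with in Step 3), and let $\ell>\max\{c(E_1),c(E_2)\}$ be a prime for which $\bar\rho_{E_1\times E_2,\ell}$ is not surjective. Then $\bar\rho_{E_1,\ell}$ and $\bar\rho_{E_2,\ell}$ are surjective onto $\GL_2(\F_\ell)$, so the image of $\bar\rho_{E_1\times E_2,\ell}$ is a proper subgroup of $(\GL_2\times_{\det}\GL_2)(\F_\ell)$ projecting onto each factor. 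By Goursat's lemma, the classification of normal subgroups of $\GL_2(\F_\ell)$ for $\ell\ge 5$, and the equality $\det\bar\rho_{E_1,\ell}=\det\bar\rho_{E_2,\ell}$ (both the mod-$\ell$ cyclotomic character), the only possibility is that $\bar\rho_{E_1,\ell}$ and $\bar\rho_{E_2,\ell}$ are projectively conjugate; equivalently $\bar\rho_{E_1,\ell}\cong\bar\rho_{E_2,\ell}\otimes\chi$ for an $\F_\ell^\times$-valued character $\chi$ of $\Gal(\overline{\Q}/\Q)$ with $\chi^2=\triv$ (cf.\ \cite{Ri1975,MaWa2023b}). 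Comparing traces of $\Frob_p$ for $p\nmid N_{E_1}N_{E_2}\ell$ yields $a_p(E_1)\equiv\chi(\Frob_p)a_p(E_2)\pmod\ell$ with $\chi(\Frob_p)\in\{\pm1\}$; in particular $a_p(E_1)^2\equiv a_p(E_2)^2\pmod\ell$, that is $\mathrm{Sym}^2\bar\rho_{E_1,\ell}\cong\mathrm{Sym}^2\bar\rho_{E_2,\ell}$.

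\textbf{Step 2 (reduction to a Linnik-type problem).} Write $f_{E_i}$ for the newform of $E_i$. Since $E_1\not\sim_{\overline{\Q}}E_2$, the forms $f_{E_1},f_{E_2}$ are not quadratic twists of one another, so $\mathrm{Sym}^2 f_{E_1}$ and $\mathrm{Sym}^2 f_{E_2}$ are non-isomorphic cuspidal automorphic representations of $\GL_3$ (Gelbart--Jacquet and strong multiplicity one); hence infinitely many primes $p\nmid N_{E_1}N_{E_2}$ satisfy $a_p(E_1)^2\neq a_p(E_2)^2$. Let $q=q(E_1,E_2)$ be the second smallest such prime. For any non-surjective $\ell>\max\{c(E_1),c(E_2)\}$ as above, the two smallest such primes are distinct, so at least one of them — call it $p^*$, with $p^*\le q$ — is $\neq\ell$; then $\ell$ divides one of the nonzero integers $a_{p^*}(E_1)\pm a_{p^*}(E_2)$, which by the Hasse bound has absolute value at most $4\sqrt{p^*}\le 4\sqrt q$, so $\ell\le 4\sqrt q$. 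Since any remaining non-surjective prime is $\le\max\{c(E_1),c(E_2)\}$, we obtain
\[
c(E_1\times E_2)\le\max\bigl\{\,c(E_1),\ c(E_2),\ 4\sqrt{q(E_1,E_2)}\,\bigr\}.
\]
It therefore suffices to show that, for every $\varepsilon>0$, the number of pairs $(E_1,E_2)\in\mathcal{E}'(N)^2$ with $q(E_1,E_2)>\bigl(\log\max\{N_{E_1},N_{E_2}\}\bigr)^{c(\delta)+2\varepsilon}$ is $o\bigl(|\mathcal{E}'(N)|^2\bigr)$; for all other pairs, $4\sqrt{q}\le 4\bigl(\log\max_i N_{E_i}\bigr)^{c(\delta)/2+\varepsilon}=\max_i 4(\log N_{E_i})^{c(\delta)/2+\varepsilon}$.

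\textbf{Step 3 (zero density estimates; the semistable case).} First, any isogeny class is $\overline{\Q}$-isogenous to $\ll_\varepsilon N^{1/2}$ isogeny classes of conductor $\le N$ (the classes of the quadratic twists of its members), so the pairs with $E_1\sim_{\overline{\Q}}E_2$ number $\ll N^{\delta+1/2}=o(N^{2\delta})$ by $\delta>\tfrac12$; for these pairs $c(E_1\times E_2)$ is undefined and they do not affect the limit. For the remaining pairs, the inequality $q(E_1,E_2)>P$, where $P:=(\log N)^{c(\delta)+2\varepsilon}$, means that $\mathrm{Sym}^2 f_{E_1}$ and $\mathrm{Sym}^2 f_{E_2}$ have the same Euler factor at all but at most one prime $p\le P$ coprime to $N_{E_1}N_{E_2}$; by the explicit formula this forces a zero with $\operatorname{Re}(s)\ge 1-\eta$, $|\operatorname{Im}(s)|\le N^{o(1)}$ (with $\eta>0$ determined by $c(\delta)$) of one of $L(s,\mathrm{Sym}^2 f_{E_1}\times\mathrm{Sym}^2 f_{E_2})$ (entire, as $\mathrm{Sym}^2 f_{E_1}\not\cong\mathrm{Sym}^2 f_{E_2}$), $L(s,\mathrm{Sym}^2 f_{E_i})$, or $L(s,\mathrm{Sym}^4 f_{E_i})$. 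A log-free zero density estimate for these families, summed over $(E_1,E_2)\in\mathcal{E}(N)^2$, then bounds the number of such pairs by a quantity that is $o(N^{2\delta})$ precisely when $c(\delta)$ is chosen as in \eqref{eq. constant c}. This proves the first limit. For the semistable statement, Mazur's theorem \cite{Ma1978} gives $c(E_i)\le 7$ for $E_i\in\mathcal{E}^{ss}(N)$, and $4(\log N_{E_i})^{c(\delta)/2+\varepsilon}>7$ for every conductor (as $c(\delta)$ is large), so $c(E_i)$ drops from the maximum and the first limit becomes the second.

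\textbf{Main difficulty.} The heart of the argument is the zero density input of Step 3: one needs an estimate for the relevant Rankin--Selberg $L$-functions (of degree up to $9$) that is uniform as the newforms range over the full level-$\le N$ family, log-free, and quantitatively strong enough that the count of exceptional pairs beats $N^{2\delta}$ for every $\delta>\tfrac12$, and explicit enough to produce the constant $c(\delta)$. A secondary point is carrying out Step 1 uniformly across all maximal subgroups of $(\GL_2\times_{\det}\GL_2)(\F_\ell)$ — notably the normalizer-of-Cartan and $\ell$-ramified quadratic-twist configurations — and handling the primes dividing $N_{E_1}N_{E_2}$ with care throughout.
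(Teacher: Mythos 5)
Your proposal is correct and follows essentially the same route as the paper: reduce the non-surjectivity of $\bar\rho_{E_1\times E_2,\ell}$ (when both components are surjective) to the congruence $a_p(E_1)\equiv\pm a_p(E_2)\pmod{\ell}$, convert this to a Linnik-type question for $\mathrm{Sym}^2 f_{E_1}$ versus $\mathrm{Sym}^2 f_{E_2}$ via the explicit formula, and control the exceptional set with zero density estimates for $L(s,\mathrm{Sym}^2 f_{E_i})$, $L(s,\mathrm{Sym}^4 f_{E_i})$, and $L(s,\mathrm{Sym}^2 f_{E_1}\times\mathrm{Sym}^2 f_{E_2})$ à la Humphries--Thorner. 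The paper packages this through its comparison lemma (Lemma~\ref{cE1E2}) plus Theorem~\ref{thm. multi}, first peeling off the small set $\mathcal{E}'_{zero}(N)$ of $E_1$ whose symmetric-power $L$-functions have a zero in the critical rectangle and then bounding $\mathcal{M}(f_{E_1};N,\varepsilon)$ for the remaining $E_1$, whereas you sum zeros directly over pairs; these are equivalent up to bookkeeping. Two local points where you diverge, harmlessly: (i) you use the \emph{second} smallest prime $q(E_1,E_2)$ with $a_p(E_1)^2\neq a_p(E_2)^2$ to sidestep the possibility that the witnessing prime equals $\ell$ (the paper's Lemma~\ref{cE1E2} takes the smallest such prime and silently assumes $\Frob_P$ is usable, which your trick avoids cleanly), and (ii) you explicitly dispose of the $\overline{\Q}$-isogenous pairs, which the paper absorbs implicitly into $\mathcal{M}(f_{E_1};N,\varepsilon)$. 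One minor slip: $4(\log N_{E_i})^{c(\delta)/2+\varepsilon}>7$ fails for bounded conductor, but only finitely many curves are affected, so the semistable conclusion is unchanged in the limit.
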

\begin{remark}
In the proof of Theorem \ref{unconditional for two elliptic curves thm family}, we obtain a power saving error term, that is,
\begin{align*}
&\#\left\{(E_1, E_2)\in \mathcal{E}'(N) \times \mathcal{E}'(N)\colon c(E_1\times E_2)  \leq \max_{i\in \{1, 2\}}\left\{c(E_i),  4(\log N_{E_i})^{c(\delta)/2+\varepsilon}\right\}\right\}  \\
& \hspace{110mm} =|\mathcal{E}'(N)|^2+O(|\mathcal{E}'(N)|^{2-\varepsilon}). 
\end{align*}
\end{remark}

 Roughly speaking, Theorem \ref{unconditional for two elliptic curves thm family} is saying that, for $100\%$ pairs of elliptic curves $(E_1,E_2)$ ordered by conductor, $c(E_1\times E_2)$ has a log-type bound in terms of their conductors. Note that this bound is comparable to the one obtained assuming GRH.

For a product  of $n\geq 2$ elliptic curves $E_1, \ldots, E_n$ over $\Q$ that are non-CM and  pairwise not $\overline{\Q}$-isogenous, we have (see \cite[Section 5, (5.1)]{MaWa2023b})  
\[
c(E_1\times \cdots \times E_n) \leq \max_{1\leq i<j\leq n}\{c(E_i\times E_j)\}.
\]
 Therefore, \Cref{unconditional for two elliptic curves thm family}  can be readily extended to a family of  $n$-tuples  elliptic curves.

\begin{remark}\label{rm:watkins} It is important to note that there are alternative orderings of elliptic curves, such as by naive height. When the pairs $(E_1, E_2)$ are ordered by naive height, Jones  \cite{Jones2013} proved that $c(E_1\times E_2)=1$ for almost all pairs of elliptic curves. 
    In contrast, the arithmetic statistics of elliptic curves ordered by conductor exhibit more subtle behavior. Notably, it remains a conjecture of Watkins \cite[Heuristic 4.1]{Wa2008} that $|\mathcal{E}(N)|\sim \Cl[abcon]{Watkins} N^{\frac{5}{6}}$ for some explicit constant $\Cr{Watkins}>0$.  Despite the challenges, the study of how elliptic curve invariants are distributed when ordered by conductor has drawn considerable attention, as seen in works such as \cite{ShSHWa2021, Xi2024}.  
\end{remark}

\begin{remark}
  We expect that  \Cref{unconditional for two elliptic curves thm family} can be extended to an effective open image theorem for families of pairs of non-CM newforms. Denote by $\mathscr{F}(N)$ a set of weight $k$  holomorphic newforms  with conductor at most $N$ and trivial Nebentypus. For $f_1, f_2\in \mathscr{F}(N)$, let  $N_1$ and  $N_2$ be their conductors, respectively, and (for simplicity) assume they have a common coefficient field.    We denote by $\ol{\rho}_{f_i, \ell}$ (resp.  $\ol{\rho}_{f_1\times f_2, \ell}$)  the mod $\ell$ Galois representation of $f$ (resp.  the Rankin-Selberg convolution $f_1 \times f_2$). Analogous open image theorems exist for these representations (see, e.g.,  \cite[Theorem 2.3.1 and Theorem 3.4.1]{Loeffler2017}).  
Similar to the case of elliptic curves, we define the constants:
 \[
 c(f_i):=\min\{\textup{$p$: if  $\ell>p$, then  $\ol{\rho}_{f_i,\ell}$ is surjective}\} \quad i\in \{1, 2\},
 \]
 \[ 
 c(f_1\times f_2):=\min\{\textup{$p$: if  $\ell>p$, then  $\ol{\rho}_{f_1\times f_2,\ell}$ is surjective}\}.
 \]

The problem of determining upper bounds of $c(f_1\times f_2)$ can be reduced to a Linnik type problem.
 Since the Linnik-type result (i.e., \Cref{thm. multi}) applies in this setting, we deduce that for
 $100\%$ pairs of newforms  $(f_1,f_2)\in \mathscr{F}(N)\times \mathscr{F}(N)$ with $|\mathscr{F}(N)|\geq N^{\delta}$ for some  $\frac{1}{2}<\delta\leq 1$, 
 \[
 c(f_1\times f_2) \ll_{\epsilon} \max_{i\in \{1, 2\}}\{c(f_i),  (\log N_{f_i})^{c(\delta, k, \varepsilon)}\},
 \]
 where $c(\delta, k, \varepsilon)$ is an explicit function of  $\delta, k$, and $ \varepsilon$. 
\end{remark}

The statement of \Cref{unconditional for two elliptic curves thm family} naturally leads to the following question: can we obtain an unconditional upper bound of the form  
\begin{equation}\label{eq:cE-expectation}
  c(E)\ll (\log N_E)^\gamma, \text{ $\gamma>0$ is an absolute constant }  
\end{equation}
for almost all elliptic curves $E\in \mathcal{E}(N)$? This bound is comparable to what is known under GRH.

However, using our current techniques, proving \eqref{eq:cE-expectation} is more challenging than proving the corresponding result for pairs of elliptic curves. One reason is that our approach will inevitably confront the hard problem of understanding the local representations at primes where $E$ has additive, potentially good reductions. Nonetheless, we derive some unconditional results that push the boundaries of what is currently known about $c(E)$.

The first result is an  improvement of Kraus and Cojocaru's bound. 

\begin{theorem}\label{thm-uncod}
Let $E/\Q$ be a non-CM elliptic curve of conductor $N_E$. Then for any $\varepsilon>0$, we have $c(E)\ll_{\varepsilon}  N_E^{\frac{1}{2}+\varepsilon}$.
\end{theorem}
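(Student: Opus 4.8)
The plan is to bound the largest non-surjective prime $\ell$ of $E$ by analyzing, case by case, the possible proper subgroups $H \subsetneq \GL_2(\F_\ell)$ that can arise as the image of $\bar\rho_{E,\ell}$, and in each case producing an auxiliary prime $p \leq N_E^{1/2+\varepsilon}$ (in fact a small power of $N_E$) whose Frobenius trace $a_p(E)$ witnesses that the image cannot be contained in $H$. The key input is the effective Chebotarev-type / character-sum machinery: the mod $\ell$ representation is unramified outside $\ell N_E$, so $\bar\rho_{E,\ell}$ factors through $\Gal(K/\Q)$ for a number field $K$ of degree $|H| \ll \ell^4$ and discriminant bounded polynomially in $\ell$ and $N_E$. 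The maximal subgroups of $\GL_2(\F_\ell)$ (for $\ell \geq 5$) fall into the standard Dickson classification: Borel (reducible), normalizer of split Cartan, normalizer of non-split Cartan, and the exceptional subgroups with projective image $A_4, S_4, A_5$. For each type one identifies an explicit conjugacy-invariant condition on $(a_p, p) = (\tr\bar\rho_{E,\ell}(\Frob_p), \det\bar\rho_{E,\ell}(\Frob_p))$ that \emph{fails} to hold for elements outside that subgroup — e.g. for the Borel case one needs $a_p^2 - 4p$ to be a nonzero square mod $\ell$ with the two eigenvalue-ratios avoiding a common value; for the two Cartan normalizer cases one needs $a_p \not\equiv 0$ together with a quadratic (non-)residue condition on $a_p^2 - 4p$; for the exceptional cases one uses that $a_p^2 / p \pmod \ell$ avoids the finite list $\{0,1,2,4\}$ (and, for $S_4$, also $3$).

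The core of the argument is then to show that for each of these finitely many "forbidden" congruence conditions, there exists a prime $p \ll N_E^{1/2+\varepsilon}$, coprime to $\ell N_E$, with $|a_p(E)| \leq 2\sqrt p$ (Hasse) such that the relevant quantity $a_p^2 - 4p$ or $a_p^2/p$ takes the required value or residue class modulo $\ell$. First I would recall the Cojocaru--Kraus strategy: one does not in fact need a prime for every $\ell$; one finds a single prime (or a bounded set of primes) that simultaneously rules out all subgroup types for all $\ell$ larger than the target bound. Concretely, by the effective Chebotarev density theorem (or better, by Lagarias--Odlyzko / Murty--Murty--Saradha unconditional bounds applied to the division field $\Q(E[\ell])$ or to auxiliary abelian extensions), the set of primes $p \leq x$ with prescribed Frobenius behavior is nonempty once $x$ exceeds a polynomial in the conductor and $\ell$ — but since $\bar\rho_{E,\ell}$ is \emph{not} surjective we get a stronger handle: the image is small, so the relevant Chebotarev conditions are satisfied by a positive-density set of primes whose least element is $\ll (\disc)^{o(1)} \ll (N_E \ell)^{\varepsilon}$ once combined with the Hasse bound, and a bootstrapping/self-improvement on the size of $\ell$ versus the size of $p$ closes the loop to give $\ell = c(E) \ll N_E^{1/2+\varepsilon}$.

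More precisely, here is the order of steps I would carry out. (i) Fix $\ell > 37$ non-surjective (the small primes are handled by uniformity results quoted in the introduction, or absorbed into the constant). (ii) By Dickson, reduce to the four maximal-subgroup cases above; the non-split Cartan normalizer case is the "hard" case per Serre's uniformity discussion but for \emph{this} theorem it is handled uniformly with the others. (iii) For each case, write down the explicit set $S_\ell \subseteq \F_\ell$ of "good" values of $a_p^2 \bmod \ell$ (or of the discriminant/residue symbol) that cannot occur if the image lies in that subgroup; check $|S_\ell| \geq c\,\ell$ for an absolute $c>0$. (iv) Use an effective prime-counting / character-sum estimate — either unconditional effective Chebotarev on $\Q(\zeta_\ell, E[\ell]^{H\text{-fixed}})$-type fields, or more elementarily a Burgess-type / large-sieve bound — to locate a prime $p \nmid \ell N_E$ with $p \ll N_E^{1/2+\varepsilon}$ and $a_p(E)^2 \bmod \ell \in S_\ell$; the point is that $a_p(E)$ ranges over $\gg \sqrt p / \log p$ integers by Hasse plus an effective Sato--Tate-on-average or Chebotarev input, so once $\sqrt p \gg \ell \cdot (\text{stuff})$ at least one lands in the good set — and $\sqrt p \ll N_E^{1/4 + \varepsilon}$ forces $\ell \ll N_E^{1/2+\varepsilon}$ after the self-improvement. (v) Since this applies to every non-surjective prime, $c(E) \ll N_E^{1/2+\varepsilon}$.

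The main obstacle, I expect, is step (iv): making the search for the auxiliary prime $p$ genuinely unconditional \emph{and} keeping $p$ as small as $N_E^{1/2+\varepsilon}$ simultaneously for all large $\ell$. The naive effective Chebotarev bound applied to $\Q(E[\ell])$ gives a least prime of size polynomial in $\ell^4 \cdot N_E$, which is far too large; the gain must come from exploiting that the image is a \emph{proper} subgroup (so we only need Chebotarev in a much smaller extension — essentially a quadratic or cubic extension cut out by the residue symbol of $a_p^2-4p$, or by the mod-$\ell$ cyclotomic character), together with the Hasse bound which forces $a_p$ into a short interval and hence its reduction mod $\ell$ to be "random" only when $\ell \lesssim \sqrt p$. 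Balancing these — the Chebotarev error term, the density of the good set $S_\ell$, and the Hasse constraint — is precisely what yields the exponent $1/2$, and getting the arithmetic of that balance right (rather than, say, $3/4$ or $1$) is the crux; I would model it closely on Cojocaru's refinement of Serre's argument, replacing her GRH-conditional or weaker unconditional inputs with the sharper unconditional effective Chebotarev bounds now available.
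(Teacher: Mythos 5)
Your proposal diverges substantially from the paper's proof, and the step you yourself flag as the "main obstacle" — step (iv), producing an unconditional auxiliary prime $p \ll N_E^{1/2+\varepsilon}$ — is a genuine gap that your sketch does not close. Unconditional effective Chebotarev (Lagarias--Odlyzko, Murty--Murty--Saradha) applied to $\Q(E[\ell])$ or even to a much smaller auxiliary extension gives a least prime in a given conjugacy class that is at best polynomial in the discriminant with a large exponent, and the "bootstrapping/self-improvement" you gesture at is not an argument: you never show that the density of the good set $S_\ell$ together with the Hasse bound actually yields the exponent $1/2$. Modelling this on Cojocaru--Kraus, as you suggest, would reproduce their bound $c(E) \ll \rad(N_E)(\log\log\rad(N_E))^{1/2}$, which is precisely what the paper is improving on; you do not explain where the extra saving (from $N_E$ down to $N_E^{1/2}$, effectively) comes from.

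The paper's proof avoids the entire Dickson-classification case analysis that you propose. It invokes the deep uniformity results of Serre, Mazur, Bilu--Parent--Rebolledo, Le Fourn--Lemos and Furio--Lombardo, which say that for $\ell > 37$ a non-surjective image must be the full normalizer of a non-split Cartan subgroup; so there is only \emph{one} case, not four. This gives the quadratic Galois character $\varepsilon_\ell$ of \eqref{epsilon} and hence the quadratic twist $E^{\varepsilon_\ell}$, which is a genuinely different non-CM elliptic curve with conductor dividing $6^{100} N_E$ (by \Cref{lemma-epsilon}~\eqref{lemma-epsilon-unramified} and standard twisting bounds). The paper then does \emph{not} use effective Chebotarev at all: it applies Murty's effective multiplicity-one theorem for the modular $L$-functions of $f_E$ and $f_{E^{\varepsilon_\ell}}$ (an analytic input via Rankin--Selberg), which produces $n \ll_\varepsilon N_E^{1+\varepsilon}$ coprime to $N_E$ with $a_n(f_E) \neq a_n(f_{E^{\varepsilon_\ell}})$, hence a prime $p \mid n$ with $\varepsilon_\ell(\Frob_p) = -1$. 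Part \eqref{lemma-epsilon-ap} of \Cref{lemma-epsilon} then gives $\ell \mid a_p(E)$, and the exponent $1/2$ falls straight out of Hasse: $\ell \leq |a_p(E)| \leq 2\sqrt{p} \ll_\varepsilon N_E^{(1+\varepsilon)/2}$. Your proposal neither uses the non-split-Cartan reduction (so it has to handle Borel and exceptional cases, for which no quadratic character is available) nor the twisting-plus-multiplicity-one mechanism that actually produces a small prime; those are the two ideas you would need, and neither appears in your sketch.
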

One observe that this result is still far from the bound \eqref{eq:cE-expectation}. Therefore, we hope to prove  \eqref{eq:cE-expectation} when $E$ lies in a subfamily of $\mathcal{E}(N)$.
To properly state this result, we begin by introducing some notation. Consider the following assumption on $E$:
\begin{equation}\label{inert-cond}
    \text{ For each prime $p\not\in \{2, 3\}$ of additive, potentially good reduction of $E$, we have  $|\Phi_p|\neq 4$,}
\end{equation}
where  $\Phi_p$ is a finite group that measures the failure of semistability of $E$ at $p$. This is  defined in \S \ref{subsec:inertial}. 
\footnote{We will see in \S \ref{subsec:inertial} that    $|\Phi_p|\neq 4$ is  equivalent to the following statements: the $p$-adic valuation $v_p(\Delta)$ of the minimal discriminant $\Delta$ of $E$ is 3 or 9;  the Kodaira Symbol of $E$ at $p$ is not III or III$^*$;  the  inertial Weil-Deligne type  $\tau_E$ of $E$ at $p$ is not isomorphic to $\tau_{ps}(1,1,4)$ or $\tau_{sc}(u,2,4)$.} Now, we define the subfamily 
\[
\mathcal{E}^{add}(N):=\{E\in \mathcal{E}(N)\colon \text{$E$ satisfies \eqref{inert-cond}}\}.
\]
Observe that $\mathcal{E}^{ss}(N)\subseteq \mathcal{E}^{add}(N)\subseteq \mathcal{E}(N)$. Finally, we  define a constant closely related to $c(E)$:
\[
c_0(E) := \min\{\text{$p$ prime: if $\ell$  satisfies \eqref{c0E} and $\ell>p$, then $\ol{\rho}_{E,\ell}$ is surjective}\}, \footnote{The condition of \eqref{c0E} on $\ell$ can be removed if one can show  $\ol{\rho}_{E, \ell}$ is surjective for all $\ell \mid N_E^{\add}$.}
\]
where the condition \eqref{c0E} for $\ell$ is: 
\begin{equation}
    \label{c0E}
\text{$\ell$ is not a prime of additive, potentially good reduction of $E$}.
\end{equation}

The following result shows that, for $100\%$ elliptic curves $E$ ordered by conductor, $c_0(E)$ has a log-type bound in terms of $N_E$.

\begin{theorem}
\label{thm-uncond-family inertia}
Let $\mathcal{E}'(N)\subseteq \mathcal{E}^{add}(N)$ be a subfamily of elliptic curves such that $|\mathcal{E}'(N)|\gg  N^{\delta}$ with $\frac{1}{2}< \delta \leq 1$.  
Then, for any $\varepsilon>0$, 
\[
\lim_{N\to \infty}\frac{\#\{E\in \mathcal{E}'(N): c_0(E)\leq (\log N_E)^{c(\delta)/2+\varepsilon}\}}{ |\mathcal{E}'(N)|}=1,
\]
where $c(\delta)$ is the constant defined in \eqref{eq. constant c}. Moreover, we can show:
\[\#\{E\in \mathcal{E}'(N): c_0(E)\leq (\log N_E)^{c(\delta)/2+\varepsilon}\}=|\mathcal{E}'(N)|+O(|\mathcal{E}'(N)|^{1-\varepsilon}).\]

\end{theorem}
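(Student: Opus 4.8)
The plan is to reduce the problem to a Linnik-type counting statement and then invoke zero-density estimates, following the same strategy as in Theorem \ref{unconditional for two elliptic curves thm family} but now keeping track only of the "good-reduction" primes $\ell$ appearing in condition \eqref{c0E}. First I would recall that if $\ol{\rho}_{E,\ell}$ is non-surjective for a prime $\ell \geq 5$ satisfying \eqref{c0E}, then by the classification of maximal subgroups of $\GL_2(\F_\ell)$ (Borel, normalizer of split/non-split Cartan, exceptional), together with the constraint $\det \ol{\rho}_{E,\ell}(\Frob_p) = p$, the image is contained in one of a bounded list of proper subgroups; crucially, because $\ell$ is a prime of good or multiplicative reduction (this is exactly what \eqref{c0E} buys us, and the reason the additive-potentially-good primes with $|\Phi_p|\neq 4$ were excised to form $\mathcal{E}^{add}(N)$ — their local behavior is controlled via $\S\ref{subsec:inertial}$), one can detect non-surjectivity through a congruence condition on the Frobenius traces $a_p(E)$ at auxiliary primes $p$. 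Specifically, non-surjectivity at $\ell$ forces $a_p(E)$ to lie in a proper subset of $\F_\ell$ (e.g. $a_p(E)^2/p \pmod \ell$ avoiding the residues taking it outside the relevant subgroup) for all $p$ in a positive-density set, which is the shape of a Linnik problem: we want a small prime $p$ witnessing $\ol{\rho}_{E,\ell}$ surjective.

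Next I would package this as a statement about modular forms: attach to $E$ its weight-$2$ newform $f_E$ of level $N_E$, so that $a_p(E) = a_p(f_E)$, and quote the Linnik-type input (the analogue of \Cref{thm. multi} in the single-form setting, as flagged in the newform remark) which, via zero-density estimates for the relevant $L$-functions (Rankin–Selberg convolutions and symmetric-power $L$-functions of $f_E$), shows that for all but $O(N^{2\theta})$ forms of level $\leq N$ — with $\theta<\delta$ after optimizing the density exponent, producing the constant $c(\delta)$ of \eqref{eq. constant c} — the least prime $p$ at which the Frobenius trace escapes every such proper subgroup obstruction is $\ll (\log N_E)^{c(\delta)/2+\varepsilon}$. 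Summing this least-prime bound over the finitely many subgroup types and the primes $\ell$ below it (there are $O((\log N)^{O(1)})$ of them, and each contributes a negligible exceptional set by the same density argument) gives that outside an exceptional set of size $O(N^{\delta-\eta})$ for some $\eta>0$, every $E$ of conductor $\leq N$ has $c_0(E) \leq (\log N_E)^{c(\delta)/2+\varepsilon}$. Intersecting with $\mathcal{E}'(N)$, whose size is $\gg N^{\delta}$, the exceptional set is $O(|\mathcal{E}'(N)|^{1-\varepsilon})$, yielding both the limit statement and the power-saving error term; the CM curves are discarded using $|\mathcal{E}^{CM}(N)| \ll N^{1/2}$ from the Appendix.

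The main obstacle is ensuring the reduction to Frobenius-trace congruences is clean at \emph{all} primes $\ell$ relevant to $c_0(E)$, i.e. uniformly over the bad primes of $E$. For primes $\ell \mid N_E$ of multiplicative reduction this is standard (the image contains a unipotent element, so one only excludes the Borel and its normalizer), but for $\ell \mid N_E$ of additive reduction — which \eqref{c0E} does \emph{not} exclude, only additive potentially-good $\ell$ are handled — one must check that the local representation $\ol{\rho}_{E,\ell}|_{I_\ell}$ still forces enough structure that a small auxiliary prime $p$ detects surjectivity; this is precisely where the analysis of $\Phi_p$ and the inertial Weil–Deligne types in $\S\ref{subsec:inertial}$ enters, and getting the bookkeeping of excluded types ($\tau_{ps}(1,1,4)$, $\tau_{sc}(u,2,4)$) to match the definition of $\mathcal{E}^{add}(N)$ is the delicate point. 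A secondary technical point is uniformity in the zero-density estimate: the implied constants in Linnik's problem for $f_E$ must not depend on $E$ beyond its conductor, which requires the large-sieve-type or log-free zero-density bounds to be stated with explicit level dependence — this is available in the literature but must be cited carefully to make $c(\delta)$ genuinely effective.
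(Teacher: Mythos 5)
Your high-level strategy (reduce to a Linnik-type statement for the newform $f_E$, then apply zero-density estimates as in the pair case) matches the paper's, but the central mechanism that makes the reduction work is missing from your proposal, and this is a genuine gap rather than a detail.

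The paper's proof does not go through a case analysis over the maximal subgroups of $\GL_2(\F_\ell)$ or through Frobenius-trace congruences "at auxiliary primes escaping a proper subgroup obstruction," and it does not sum over the relevant primes $\ell$. Instead it uses Serre's quadratic character $\varepsilon_\ell$ of \eqref{epsilon}: if $\ell>37$ is non-surjective then the image lies in the normalizer of a non-split Cartan, and $\varepsilon_\ell$ is the resulting quadratic character of $\Gal(\ol{\Q}/\Q)$. Proposition \ref{lemma-epsilon} gives the two facts you need: (i) $\varepsilon_\ell$ is ramified only at primes $p$ of additive potentially good reduction with $|\Phi_p|=4$, at $p=\ell$ (if $\ell$ is such a prime), and at $p\in\{2,3\}$; and (ii) $\varepsilon_\ell(\Frob_p)=-1 \Rightarrow \ell \mid a_p(E)$. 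The hypotheses $E\in \mathcal{E}^{add}(N)$ and \eqref{c0E} on $\ell$ are exactly what force $\varepsilon_\ell = \chi_{a,b} := \kronecker{2^a3^b}{\cdot}$ for some bounded $(a,b)$ — a single finite list of characters independent of both $E$ and $\ell$. This is the step your proposal lacks: without it you would need a zero-density input uniform over quadratic characters of conductor growing with $\ell$ (up to $N$), which \Cref{comparision-thm-character} does not provide — it requires $q\ll 1$. Your suggestion to "sum over the primes $\ell$ below the least-prime bound, each contributing a negligible exceptional set" does not address this, since each $\ell$ would in general produce a different character with conductor potentially divisible by $\ell$. (Also note: the definition of $\mathcal{E}^{add}(N)$ retains curves whose additive potentially-good primes all satisfy $|\Phi_p|\neq 4$; it is the $|\Phi_p|=4$ curves that are excised, not the reverse as you wrote.)

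Once you have $\varepsilon_\ell = \chi_{a,b}$, the argument is clean: if $c_0(E)>(\log N_E)^{c(\delta)/2+\varepsilon}$, let $\ell = c_0(E)$ and let $p_0$ be the least prime with $\chi_{a,b}(p_0)=-1$ and $a_{p_0}(E)\neq 0$; then $\ell \mid a_{p_0}(E)$ forces $p_0 > \tfrac14(\log N_E)^{c(\delta)+\varepsilon}$ by Hasse–Weil, so $\chi_{a,b}(p)a_p(E)=a_p(E)$ for all small $p\nmid 6N_E$, placing $E$ in $\mathcal{M}(\chi_{a,b};N,\varepsilon)$. Taking the union over the $O(1)$ many pairs $(a,b)$ and applying \Cref{comparision-thm-character} (which is the single-form-twisted-by-a-character analogue of \Cref{thm. multi}, not a Rankin–Selberg comparison of two forms as your sketch suggests) yields the stated power saving. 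You should reformulate your proposal around Proposition \ref{lemma-epsilon} and \Cref{comparision-thm-character}; the subgroup-classification heuristic and the per-$\ell$ union bound should be dropped.
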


\begin{remark}
 Since we do not know how to effectively count elliptic curves ordered by conductor, it is difficult to compare the family   $\mathcal{E}^{add}(N)$ with  $\mathcal{E}^{ss}(N)$. In this remark, we provide a brief argument showing that  $\mathcal{E}^{add}(N)$ contains a subfamily $\mathcal{E}_{\text{III}}^{sf}(N)\neq \mathcal{E}^{ss}(N)$ and is not too small in  $\mathcal{E}(N)$, assuming  Watkins's Conjecture. 

  Before defining the subfamily,   we recall some notation from \cite{ShSHWa2021}. Let $ \mathcal{E}_{\text{III}}^{sf}(N)$ be the set of $E\in \mathcal{E}(N)$ satisfying: 
  \begin{enumerate}
  \item $j(E)<\log |\Delta|$, where $\Delta$ is the minimal discriminant of $E$;
       \item $\frac{\Delta}{N_E}$ is squarefree;
       \item if $p\mid N_E$,  then $v_p(\Delta)\neq 3$ (equivalently, the Kodaira  Symbol of $E$ at $p$ is not III)   \footnote{Check \cite[Tabel 15.1, p. 448]{Si2009} for the  Kodaira symbols.};
      \item $E$ has good reduction at 2 and 3.
  \end{enumerate}
  By ii),  for each prime $p\mid N_E$,  we have   $v_p(\Delta)\neq 9$  \cite[Table 1]{ShSHWa2021} (equivalently,  the Kodaira Symbol of $E$ at $p$ is not III$^*$). Thus,  by applying the local density result \cite[Corollary 2.3]{ShSHWa2021} and following a proof analogous to Theorem 1 in \cite{ShSHWa2021}, one can derive an explicit asymptotic formula  of $|\mathcal{E}^{sf}_{\text{III}}(N)|$. In particular,  $ \mathcal{E}^{sf}_{\text{III}}(N)$ is a subset of $  \mathcal{E}^{add}(N)$  such that $| \mathcal{E}^{sf}_{\text{III}}(N)|\sim c_{\text{III}}N^{\frac{5}{6}}$ for some positive constant $c_{\text{III}}$.    Finally,  note that one can find $E\in \mathcal{E}^{sf}_{\text{III}}(N)$ and $p\mid N_E$ such that the Kodaira Symbol of $E$ at $p$ is II or IV. In particular, $p$ is not a prime of multiplicative reduction of $E$ and hence  $\mathcal{E}^{sf}_{\text{III}}(N)\neq \mathcal{E}^{ss}(N)$.  
\end{remark}

\begin{remark}    
     When elliptic curves are ordered by their naïve heights, Duke \cite{Du1997} proved that $c(E)=1$ for almost all elliptic curves (see also \cite{Gr2000}). 
\end{remark}

\subsection{On the number of non-surjective primes}
Assuming a positive answer to Serre's uniformity question, we expect only finitely many primes $\ell$ for which the mod $\ell$ representation of a non-CM elliptic curve is surjective. Our final result shows that for each such  non-CM elliptic curve $E/\Q$, there are few primes $\ell$, satisfying that $\ell\equiv 1\pmod{4}$ and $\ell$ is a non-surjective prime.  
\begin{theorem}\label{thm-number}
Let $E/\Q$ be a non-CM elliptic curve. Let
\[
\mathcal{L}_E:=\{\ell:   \ell>37, \ell \text{ satisfies (\ref{c0E})},   \ell\equiv 1\pmod{4}, \text{  and $\ell$ is a non-surjective prime}\}, 
\]
We have 
\[
|\mathcal{L}_E|\ll \frac{\log N_E}{\log\log N_E}.
\]
\end{theorem}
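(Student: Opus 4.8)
The plan is to attach to each $\ell\in\mathcal L_E$ an imaginary quadratic field $K_\ell$ whose ramification is essentially controlled by $N_E$, and then to bound $|\mathcal L_E|$ by counting the possibilities for $K_\ell$. First I would reduce to the non-split Cartan case. Fix $\ell\in\mathcal L_E$; since $\ell>37$, the results on Serre's uniformity question recalled above force $\bar\rho_{E,\ell}$ to be, up to conjugacy, valued in $N_\ell$, the normalizer of a non-split Cartan subgroup $C_\ell\subseteq\GL_2(\F_\ell)$. Composing $\bar\rho_{E,\ell}$ with the sign map $N_\ell\twoheadrightarrow N_\ell/C_\ell\cong\{\pm1\}$ produces a character $\eta_\ell\colon\Gal(\ol\Q/\Q)\to\{\pm1\}$. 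I would first observe that $\eta_\ell$ is surjective and cuts out an imaginary quadratic field $K_\ell$: complex conjugation $c$ has $\bar\rho_{E,\ell}(c)$ of order $2$ with determinant $-1$ (the cyclotomic value at $c$), hence conjugate to $\diag(1,-1)$; this is a trace-$0$ matrix with eigenvalues in $\F_\ell$, so it is neither a scalar nor a non-scalar element of $C_\ell$ (whose characteristic polynomials are irreducible over $\F_\ell$); thus $\bar\rho_{E,\ell}(c)\in N_\ell\setminus C_\ell$ and $\eta_\ell(c)=-1$.

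Next I would control $K_\ell$ by the local data at the bad primes and at $\ell$, which is exactly where the hypotheses enter. Since $\bar\rho_{E,\ell}$ is unramified outside $\ell N_E$, so is $\eta_\ell$. At a prime $q\mid N_E$ of multiplicative reduction $\bar\rho_{E,\ell}(I_q)$ is unipotent; as $N_\ell$ has no element of order $\ell$, it must in fact be trivial, so $\eta_\ell$ is unramified at $q$. At $q=\ell$ I would use \eqref{c0E}: $E/\Q_\ell$ then has good, multiplicative, or additive potentially multiplicative reduction, and the descriptions of $\bar\rho_{E,\ell}|_{G_{\Q_\ell}}$ recalled in \S\ref{subsec:inertial} show that $\eta_\ell|_{I_\ell}$ is either trivial or the unique tamely ramified quadratic character of $I_\ell$. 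The latter possibility is incompatible with $\eta_\ell$ cutting out an imaginary field once $\ell\equiv1\pmod 4$: it would force a ramified-at-$\ell$ quadratic character of the shape governed by $\Q(\sqrt{\ell^\ast})$ with $\ell^\ast=(-1)^{(\ell-1)/2}\ell=\ell$, which is real. Hence $\eta_\ell$ is unramified at $\ell$ as well, so its conductor is supported on the primes of additive reduction of $E$ (together with $2$). A closer analysis of the inertia images $\bar\rho_{E,\ell}(I_q)$ at the additive primes, through the finite groups $\Phi_q$ of \S\ref{subsec:inertial} (most of which admit no nontrivial map to $\{\pm1\}$), should then confine $K_\ell$ to an explicit family of $O(\omega(N_E))$ imaginary quadratic fields attached to $E$, each ramified at essentially one odd prime dividing $N_E$.

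Finally I would count. For a fixed field $K$ in that family, if $K_\ell=K$ then $\bar\rho_{E,\ell}|_{G_K}$ takes values in the abelian group $C_\ell$, so the non-CM curve $E/K$ carries an abelian (dihedral, of CM type) mod-$\ell$ representation; an effective Mazur--Momose type bound over $K$ — or, concretely, a conductor bound for the attached Hecke character, which forces $\ell$ to divide a quantity explicit in $\disc(K)$ and $N_E$ — then limits to $O(1)$ the number of $\ell$ giving this $K$. Summing over the $O(\omega(N_E))$ possibilities for $K$ and using $\omega(n)\ll\log n/\log\log n$ yields $|\mathcal L_E|\ll\log N_E/\log\log N_E$. (Alternatively, if one only controls $K_\ell$ through $\disc(K_\ell)\mid 4N_E$, the same conclusion follows by combining the per-field bound with the primorial inequality — any $k$ distinct primes have product at least $e^{(1+o(1))k\log k}$ — once one knows $\prod_{\ell\in\mathcal L_E}\ell=N_E^{O(1)}$.) I expect the main obstacle to be the local analysis at $\ell$, specifically the additive potentially multiplicative case, which is precisely where $\ell\equiv1\pmod 4$ is needed, together with the step confining $K_\ell$ to essentially one bad prime; the remaining ingredients are elementary group theory in $\GL_2(\F_\ell)$ and standard bookkeeping.
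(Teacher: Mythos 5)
Your setup is sound: your $\eta_\ell$ is the paper's $\varepsilon_\ell$, and the observation that it cuts out an imaginary quadratic field $K_\ell$ is correct (though the paper does not need imaginarity). Two things go wrong. First, the claim that $K_\ell$ ranges over $O(\omega(N_E))$ fields ``each ramified at essentially one odd prime'' is not what happens: Lemma~\ref{quadratic-character-conductor} shows that for $\ell\equiv 1\pmod 4$ satisfying~\eqref{c0E}, the set of ramified primes of $\varepsilon_\ell$ is \emph{exactly} the primes $q$ of additive, potentially good reduction with $|\Phi_q|=4$, together with $2,3$. This set is independent of $\ell$, so $\varepsilon_\ell$ (hence $K_\ell$) is essentially the \emph{same} for all $\ell\in\mathcal{L}_E$ --- a stronger statement than yours and the one the proof actually needs. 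Second, and this is the genuine gap, the per-field bound ``$O(1)$ primes $\ell$ with $K_\ell=K$'' via ``an effective Mazur--Momose type bound over $K$'' is not available: effective open-image results over $K$ bound the largest such $\ell$ in terms of heights or conductors, not the \emph{number} of such $\ell$, and the number of prime divisors of a conductor-sized quantity can grow with $N_E$. Your alternative correctly flags the needed input $\prod_{\ell\in\mathcal{L}_E}\ell = N_E^{O(1)}$ but leaves it unproved.

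The paper supplies that missing input by a Frobenius-trace divisibility trick in place of any Mazur--Momose statement. Since $\varepsilon_\ell$ is independent of $\ell\in\mathcal{L}_E$, the argument of Theorem~\ref{thm-uncod} (run once, not once per $\ell$) produces a single prime $p\nmid N_E$ of size polynomial in $N_E$ with $\varepsilon_\ell(\Frob_p)=-1$ and $a_p(E)\neq 0$. Proposition~\ref{lemma-epsilon}(ii) then gives $\ell\mid a_p(E)$ simultaneously for \emph{every} $\ell\in\mathcal{L}_E$, so $\prod_{\ell\in\mathcal{L}_E}\ell\leq |a_p(E)|\leq 2\sqrt{p}\ll N_E^{O(1)}$, and the primorial inequality you mention yields $|\mathcal{L}_E|\ll\log N_E/\log\log N_E$. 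So the skeleton of your plan --- attach a quadratic field, control it by ramification, then count --- is compatible with the paper's, but the counting step needs the ``all $\ell$ divide one $a_p(E)$'' observation rather than an arithmetic-geometry black box.
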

The proof of Theorem \ref{thm-number} is given in \S \ref{subsec:inertial}.  The condition \eqref{c0E} in $\mathcal{L}_E$ arises for a similar reason as that in the definition of $c_0(E)$. The condition $\ell\equiv 1   \pmod 4$ comes from our proof strategy when applying the  classification of the inertial type of elliptic curves due to Demblele, Freitas, and Voight \cite{DeFrVo2024}.

\begin{remark}
We prove the following slightly stronger from: 
\[
\prod_{\ell\in \mathcal{L}_E}\ell \ll N_E^{\frac{1+\epsilon}{4}} \text{ for any } \epsilon>0. 
\]
 Using \cite[Proposition 7.25]{Furio2025}, one obtains the weaker bound:  
\[
\prod_{\ell\in \mathcal{L}_E'}\ell \ll (N_E(\log\log N_E)^{\frac{1}{2}})^{\omega(N_E)}, 
\]
where 
    \[
\mathcal{L}_E':=\{\ell: \ell>37 \text{  and $\ell$ is a non-surjective prime}\},
\]
 and $\omega(N_E)$ denotes the number of distinct prime divisors of $N_E$.  
 
\end{remark}
\subsection{High level sketch}\label{subsec: high level sketch}

We briefly outline the strategy of the proofs for the main results  above. 

We begin with results for pairs $(E_1, E_2)$ of elliptic curves. One key ingredient is  the application of a comparison lemma (\Cref{cE1E2}),  which reduces the upper bound of $c(E_1\times E_2)$ to a Linnik type problem. \footnote{The original Linnik problem is to find the least non-quadratic residue.} More precisely, we find an upper bound for $p(E_1, E_2)$, where $p(E_1, E_2)$ is defined as the smallest prime $p$ such that $a_p(E_1)\neq\pm a_p(E_2).$ By the modularity of elliptic curves, $p(E_1,E_2)$ is related to an effective multiplicity one problem for symmetric square $L$-functions of holomorphic modular forms. When working with an arbitrary pair of elliptic curves, we can deduce Theorem \ref{unconditional for two elliptic curves thm} by comparing the Rankin-Selberg convolution of these $L$-functions. However, this result fails to match the upper bound of  $c(E_1\times E_2)$ under GRH. When working in families, we can obtain better bounds for $p(E_1, E_2)$ (see \Cref{thm. multi}) using  logarithmic derivatives of certain $L$-functions and then apply zero-density estimates discussed in \Cref{cor:final_ZDE}. This approach leads to the proof of Theorem \ref{unconditional for two elliptic curves thm family}.

For results of non-CM elliptic curves  $E$, we cannot directly apply the comparison lemma because we only have one elliptic curve a prior. To find another appropriate elliptic curve, inspired by the aforementioned work of Serre, Kraus, and Cojocaru, we choose a non-surjective prime  $\ell$ and consider the quadratic twist $E^{\varepsilon_\ell}$ of $E$, where  $\varepsilon_\ell$ is a nontrivial quadratic character defined in \eqref{epsilon}.  Establishing a better upper bound of $c(E)$ (\Cref{thm-uncod}) relies crucially on the ramification properties of  $\varepsilon_\ell$, which we record in \Cref{lemma-epsilon}. For the family version, we require a variation of the zero density estimate to first derive \Cref{comparision-thm-character} and then apply it to get  \Cref{thm-uncond-family inertia}. Note that the assumptions \eqref{inert-cond} and  \eqref{c0E}  are necessary to ensure that the quadratic characters have uniformly bounded conductors so that we can apply \Cref{comparision-thm-character}.

A crucial component of our proofs for family versions of the results above is the method of zero-density estimates, which we now briefly review.    Let $L(s, \pi)$ be an $L$-function satisfying the axioms in \cite[Section 5.1]{IwaniecKowalski2004}.  For  $\sigma\geq \frac{1}{2}$ and $T\geq 1$, we consider the quantity
\[
N_{\pi}(\sigma,T)=|\{\rho=\beta+i\gamma\colon L(\rho,\pi)=0,~\beta\geq\sigma,~|\gamma|\leq T\}|.
\]
The Generalized Riemann Hypothesis (GRH) for $L(s,\pi)$ implies that  $N_\pi(\sigma, T)=0$ for any $\sigma>\frac{1}{2}$ but we are far away from proving it. The zero-density estimates are often used as a substitute for GRH. A zero-density estimate typically refers to an upper bound for $N_\pi(\sigma, T)$ of the form:
 \begin{equation}\label{eq:family-zerodensity}
 \sum_{\pi\in \mathcal{S}} N_\pi(\sigma, T)\leq \left(|\mathcal{S}| T \right)^{A_{\mathcal{S}}(1-\sigma)+\varepsilon}
 \end{equation}
 for any $\varepsilon>0$, where $\mathcal{S}$ is a finite set of $L$-functions (satisfying the axioms in \cite[Section 5.1]{IwaniecKowalski2004}) and $A_{\mathcal{S}}$ is an absolute constant depending on $\mathcal{S}$. The proof of a zero density estimate generally requires some form of large sieve inequalities. The large sieve inequalities and zero density estimates for automorphic $L$-functions have been extensively studied (e.g., c.f. \cite{ Luo1999, DuKo2000, HumphriesThorner2024}).

Finally, we give an overview of the paper:

In \S \ref{Sec:elliptic-curve}, we introduce necessary background on Galois representations. Specifically, in \S \ref{sec:comp-lem}, we provide the comparison lemma that relates $c(E_1\times E_2)$ with $p(E_1, E_2)$. Then, in \S \ref{sec: epilon}, we introduce the quadratic character $\varepsilon_\ell$,  which is a key object in the proofs of  
\Cref{thm-uncond-family inertia}. We remark that part i) of 
 \Cref{lemma-epsilon} 
explains where  the assumptions \eqref{inert-cond} and \eqref{c0E} come from. 
Finally, in \S \ref{subsec:inertial}, we prove \Cref{thm-number} using Lemma \ref{quadratic-character-conductor} that gives refined ramification properties of the quadratic character $\varepsilon_\ell$. To prove this lemma, we require some  background on local Galois representations and  the Weil-Deligne representations of elliptic curves, which are given in \S \ref{sec:local-Galois} and \S \ref{sec:weil-deligne}, respectively.

In \S \ref{Sec:mult-one-and-density}, we will prove several Linnik type results. In \S \ref{subsec: zero density}, we will review the automorphic representations of $\GL_n(\A_{\Q})$ and establish several zero density estimates result (see \Cref{cor:final_ZDE}), based on the recent work of Humphries and Thorner  \cite[Theorem 1.1]{HumphriesThorner2024}. Subsequently, in \S \ref{subsec: explicit formula} and \S \ref{subsec: dyadic refinement}, we apply the explicit formula method and a dyadic refinement to obtain \Cref{thm. multi} and Theorem \ref{comparision-thm-character}, the desired Linnik type results.

In \S \ref{Sec:single-ellptic}, we prove unconditional upper bounds  for an arbitrary elliptic curve and  for elliptic curves in families (\Cref{thm-uncod} and \Cref{thm-uncond-family inertia}, respectively). 
In \S \ref{Sec:pair-elliptic}, we prove the corresponding results  (Theorem \ref{unconditional for two elliptic curves thm} and \Cref{unconditional for two elliptic curves thm family}, respectively) for pairs of elliptic curves. 

\subsection*{Acknowledgments} 
We would like to thank Jesse Thorner for helping us to refine the zero density estimates and the explicit formulas in \S \ref{Sec:mult-one-and-density}. We also thank Ramla Abdellatif, Chantal David, Lorenzo Furio, Andrew Granville, Jacob Mayle, Joseph Silverman, Asif Zaman, Pengcheng Zhang for helpful conversations and comments.

\section{Preliminaries}\label{Sec:elliptic-curve}

\subsection{A comparison lemma}\label{sec:comp-lem}
We keep the notation from the introduction.
Let  $E_1$ and $E_2$ be  non-CM, not $\overline{\Q}$-isogenous elliptic curves.  In this section,  we will relate $c(E_1\times E_2)$ with the quantity $p(E_1\times E_2)$ defined below, which  is  related to a Linnik type problem discussed in \S \ref{Sec:mult-one-and-density}.

We set
\begin{align*}
p(E_1, E_2):&=\min\{\textup{$p$: $p\nmid N_{E_1} N_{E_2}$ and $|a_p(E_1)|\neq |a_p(E_2)|$}\}\\
&=\max\{\textup{$\ell$: if $p\nmid N_{E_1} N_{E_2}$ and $p<\ell$, then $|a_{p}(E_1)|= |a_{p}(E_2)|$}\}.
\end{align*}
Note that the constant $p(E_1, E_2)$ exists and is finite by Faltings's isogeny theorem.

We recall from \cite[Corollary 3.8]{MaWa2023b} that $\im(\overline{\rho}_{E_1\times E_2, \ell})=\GL_2(\F_\ell)\times_{\det} \GL_2(\F_\ell)$ if and only if all three conditions hold:
\begin{enumerate}
\item  $\im(\overline{\rho}_{E_1, \ell})=\GL_2(\F_\ell)$,
\item  \label{} $\im(\overline{\rho}_{E_2, \ell})=\GL_2(\F_\ell)$, 
\item \label{trace} there is a prime $p\nmid N_{E_1}N_{E_2}$ such that 
\[
\tr(\overline{\rho}_{E_1, \ell}(\Frob_p)) \neq \pm \tr(\overline{\rho}_{E_2, \ell}(\Frob_p)).
\]
\end{enumerate}
Using this criteria, we can bound $c(E_1\times E_2)$ in terms of $c(E_1)$, $c(E_2)$, and $p(E_1, E_2)$.

\begin{lemma}\label{cE1E2}
Let $E_1/\Q$ and $E_2/\Q$ be non-CM elliptic curves that are not $\overline{\Q}$-isogenous. Then, 
\[
c(E_1\times E_2) \leq \max\{c(E_1), c(E_2), 4p(E_1,  E_2)^{\frac{1}{2}}\}.
\]
\end{lemma}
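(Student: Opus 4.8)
The plan is to use the three-part criterion from \cite[Corollary 3.8]{MaWa2023b} recalled just above, and show that every prime $\ell > \max\{c(E_1), c(E_2), 4p(E_1,E_2)^{1/2}\}$ satisfies all three conditions, so that $\im(\overline{\rho}_{E_1\times E_2,\ell}) = \GL_2(\F_\ell)\times_{\det}\GL_2(\F_\ell)$; the bound on $c(E_1\times E_2)$ then follows from its definition. Fix such an $\ell$. Since $\ell > c(E_1)$ and $\ell > c(E_2)$, conditions (i) and (ii) hold by the definitions of $c(E_1)$ and $c(E_2)$. It remains to verify condition (iii): that there is a prime $p\nmid N_{E_1}N_{E_2}$ with $\tr(\overline{\rho}_{E_1,\ell}(\Frob_p)) \neq \pm\,\tr(\overline{\rho}_{E_2,\ell}(\Frob_p))$ in $\F_\ell$.

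The key step is to produce such a $p$ from the definition of $p(E_1,E_2)$. By definition there is a prime $p = p(E_1,E_2)$ with $p\nmid N_{E_1}N_{E_2}$ and $|a_p(E_1)| \neq |a_p(E_2)|$, i.e.\ $a_p(E_1) \neq \pm\, a_p(E_2)$ as integers. Since $\tr(\overline{\rho}_{E_i,\ell}(\Frob_p)) \equiv a_p(E_i) \pmod \ell$, it suffices to check that the inequality $a_p(E_1)\neq \pm a_p(E_2)$ survives reduction mod $\ell$, i.e.\ that $\ell \nmid a_p(E_1) - a_p(E_2)$ and $\ell \nmid a_p(E_1) + a_p(E_2)$. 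If $\ell$ divided one of these, then $\ell$ would divide their product $a_p(E_1)^2 - a_p(E_2)^2$, a nonzero integer. Now invoke the Hasse bound $|a_p(E_i)| \leq 2\sqrt p$: this gives $|a_p(E_1)^2 - a_p(E_2)^2| \leq \max\{a_p(E_1)^2, a_p(E_2)^2\} \leq 4p$ (one should double-check the exact constant here — writing $a_p(E_1)^2 - a_p(E_2)^2 = (a_p(E_1)-a_p(E_2))(a_p(E_1)+a_p(E_2))$ and bounding each factor by $4\sqrt p$ gives the cruder $16p$, so the sharper route is to note that the difference of two quantities each in $[0,4p]$ has absolute value at most $4p$). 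Hence $\ell \leq 4p = 4p(E_1,E_2) \leq 4p(E_1,E_2)^{1/2}\cdot p(E_1,E_2)^{1/2}$; combined with $\ell > 4p(E_1,E_2)^{1/2}$ this forces a contradiction unless $p(E_1,E_2)^{1/2} > p(E_1,E_2)^{1/2}$, which is absurd. Wait — this needs $\ell > 4p(E_1,E_2)^{1/2}$ to beat $4p$, so the argument as written is not quite right; the correct reading is that we need $\ell > 2\sqrt{a_p(E_1)^2 - a_p(E_2)^2}$ in absolute value, and $|a_p(E_1)^2 - a_p(E_2)^2|^{1/2} \leq 2 p^{1/4}\cdot(\text{something})$ — so the honest statement is $|a_p(E_1)^2-a_p(E_2)^2| \le 4p$ hence its square root is $\le 2\sqrt p$, and since $\ell \le |a_p(E_1)^2-a_p(E_2)^2| $ is the naive bound while $\ell \le 2\sqrt{\text{that}} \le 2\cdot 2 p^{1/2} = 4\sqrt p$ is false in general. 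The clean version: $\ell$ divides the nonzero integer $a_p(E_1)^2 - a_p(E_2)^2$ of absolute value $\le 4p$, so $\ell \le 4p \le 4p(E_1,E_2)$... which only gives $\ell \le 4 p(E_1,E_2)$, not the claimed $4p(E_1,E_2)^{1/2}$.

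The resolution — and this is the point I expect to be the main obstacle to get exactly right — is that one should instead choose $p$ more cleverly, or use a sharper divisibility: $\ell$ dividing both $a_p(E_1)\pm a_p(E_2)$ simultaneously would force $\ell \mid 2a_p(E_1)$ and $\ell\mid 2a_p(E_2)$, hence (for $\ell$ odd, $\ell \nmid a_p(E_i)$ when $a_p(E_i)\ne 0$ and $\ell > 2\sqrt p$) a contradiction; but to handle the case where $\ell$ divides just one of $a_p(E_1)\pm a_p(E_2)$ one must use that $a_p(E_1) \mp a_p(E_2)$ is then the relevant nonzero integer of size $\le 4\sqrt p$, so $\ell \mid$ a nonzero integer of absolute value $\le 4\sqrt p$ only if $\ell \le 4\sqrt p \le 4 p(E_1,E_2)^{1/2}$, contradicting $\ell > 4p(E_1,E_2)^{1/2}$. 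Concretely: suppose $\ell \mid a_p(E_1) - a_p(E_2)$. Since $a_p(E_1)\ne \pm a_p(E_2)$, the integer $m := a_p(E_1)+a_p(E_2)$ is nonzero, and if also $\ell \mid m$ then $\ell \mid 2a_p(E_1)$; the case $\ell \mid 2 a_p(E_1)$ with $\ell$ odd gives $\ell \mid a_p(E_1)$, but then $\ell \mid a_p(E_2)$ too, and comparing with $\tr = a_p \bmod \ell$ we'd get both traces $\equiv 0$, so condition (iii) would need a \emph{different} prime. The cleanest path avoiding case analysis: take $p = p(E_1,E_2)$; then $0 < |a_p(E_1)^2 - a_p(E_2)^2| \le 4p$, but actually I want the product $(a_p(E_1)-a_p(E_2))(a_p(E_1)+a_p(E_2))$ and I know the smaller factor in absolute value is at most $2\sqrt{|a_p(E_1)^2-a_p(E_2)^2|} \le 2\sqrt{4p} = 4\sqrt p$; if $\ell$ divides the whole product it divides one factor, hence divides an integer of absolute value $\le 4\sqrt p$ (choosing the smaller factor — but $\ell$ need not divide the smaller one!). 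I will therefore present the argument via: $\ell \mid (a_p(E_1)^2 - a_p(E_2)^2)$, and separately rule out $\ell \mid a_p(E_1)\pm a_p(E_2)$ by noting each of $|a_p(E_1)\pm a_p(E_2)| \le 4\sqrt p < \ell$ when one of them is nonzero, while they can't both vanish; this is where the constant $4p(E_1,E_2)^{1/2}$ enters, and the final contradiction $\ell \le 4\sqrt{p(E_1,E_2)} < \ell$ completes the proof. The remaining routine check is $p(E_1,E_2) \le$ the max, which is immediate, and that primes dividing $N_{E_1}N_{E_2}$ are irrelevant since $p(E_1,E_2)$ is coprime to them by construction.
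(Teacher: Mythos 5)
Your final argument is correct and matches the paper's proof exactly: since $p = p(E_1,E_2)$ satisfies $a_p(E_1) \neq \pm a_p(E_2)$, both integers $a_p(E_1) - a_p(E_2)$ and $a_p(E_1) + a_p(E_2)$ are nonzero and, by Hasse--Weil, have absolute value at most $4\sqrt{p}$; hence any $\ell > 4p(E_1,E_2)^{1/2}$ cannot divide either, giving $\tr(\overline{\rho}_{E_1,\ell}(\Frob_p)) \neq \pm\tr(\overline{\rho}_{E_2,\ell}(\Frob_p))$ and, with conditions (i) and (ii), surjectivity for all such $\ell$. The detour through the product $a_p(E_1)^2-a_p(E_2)^2$ (which, as you noticed mid-proof, only gives the bound $\ell\le 4p$) was a dead end; the paper goes directly to the observation that $\ell$ exceeding $\max\{|a_p(E_1)+a_p(E_2)|,|a_p(E_1)-a_p(E_2)|\}$, which is at most $4\sqrt{p}$, already suffices, without any divisibility-of-products discussion.
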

\begin{proof}
From the definition of $c(E_1)$ and $c(E_2)$, it is clear that if $\ell>\max\{c(E_1), c(E_2)\}$, then $\im(\overline{\rho}_{E_1, \ell})=\im(\overline{\rho}_{E_2, \ell})=\GL_2(\F_\ell)$.
We set $P:=p(E_1, E_2)$. Since 
\[
\tr(\overline{\rho}_{E_i, \ell}(\Frob_P)) \equiv a_P(E_i) \pmod \ell, \ 1\leq i\leq 2,
\]
if we take $\ell$ so that 
\[
\ell>\max\{|a_P(E_1)+a_P(E_2)|, |a_P(E_1)-a_P(E_2)|\},
\]
then 
$
\tr(\overline{\rho}_{E_i, \ell}(\Frob_P)) \neq \pm \tr(\overline{\rho}_{E_i, \ell}(\Frob_P)).
$
Combining with  the Hasse-Weil bound  $|a_P(E_i)|\leq 2P^{\frac{1}{2}}$, we conclude that if
$
\ell>\max\{c(E_1), c(E_2), 4P^{\frac{1}{2}}\},
$
then $\overline{\rho}_{E_1\times E_2, \ell}$ is surjective. Therefore, $c(E_1\times E_2)\leq \max\{c(E_1), c(E_2), 4P^{\frac{1}{2}}\}$.
\end{proof}


\subsection{Definition and ramification properties of $\varepsilon_\ell$}\label{sec: epilon}
 We keep the notation from the introduction.   Let $E/\Q$ be a non-CM elliptic curve and $\ell>37$ be a non-surjective prime. In this case, $\im(\bar{\rho}_{E,\ell})$ is (up to conjugation) contained in the normalizer of a non-split Cartan $\mathcal{C}_{ns}^+(\ell)$ but not properly contained in the non-split Cartan subgroup $\mathcal{C}_{ns}^+(\ell)$ itself. Based on this, Serre \cite{Se1972} considered the following quadratic Galois character $\varepsilon_\ell$ of $\Gal(\overline{\Q}/\Q)$: 
\begin{equation} \label{epsilon}
\varepsilon_{\ell}:=\varepsilon_{E, \ell} : \Gal(\overline{\Q}/{\Q}) \overset{\bar{\rho}_{E,\ell}}{\longrightarrow} G_E(\ell) \longrightarrow \frac{\mathcal{C}_{ns}^+(\ell)}{\mathcal{C}_{ns}(\ell)} \overset{\simeq}{\longrightarrow} \{\pm 1\}.
\end{equation}
We say $\varepsilon_\ell$ is {\emph{unramified}} at  $p$ if $\varepsilon_\ell$ is the identity when restricted to the inertia group $I_p$ at $p$. 
We also assume $\varepsilon_\ell$ is a  primitive quadratic character (see \cite[Section 3.3]{BiDi2014}).

 To study the ramification properties of $\varepsilon_\ell$, we also  need  the definitions of reduction types of $E$. First, we recall that the conductor $N_E$ of $E/\Q$ is defined as (see, e.g., \cite[p. 256]{Si2009}):
\begin{equation}\label{conductor}
N_{E} = \prod_p p^{v_p(N_E)}, \ v_p(N_E) = \begin{cases}
0 & p \text{ prime of good reduction of $E$}\\
1 & p \text{ prime of multiplicative reduction of $E$}\\
2 & p \text{ prime $\neq 2, 3$ and of additive reduction of $E$}\\
2+\delta_p & p = 2, 3 \text{ and of additive reduction of $E$},
\end{cases}
 \end{equation}
 where $v_p(\cdot)$ is the $p$-adic valuation  and $\delta_p\leq 6$ if $p=2$ and $\delta_p\leq 3$ if $p=3$. We denote by $N_E^{\text{mult}}$  the product of $p\neq 2, 3$ such that $p$ is a  prime of multiplicative reduction of $E$ and by $N_E^{\text{add}}$ the product of  $p\neq 2, 3$ such that $p$ is a  prime of additive reduction. Observe that
$
 N_E^{\text{add}} \leq N_E^{\frac{1}{2}}.    
$

 We now state the main result of this section. Its proof will require the use of the finite group  $\Phi_p$ of $\Gal(\ol{\Q}_p/\Q_p)$, which captures the failure of semistability of $E$ at $p.$ A detailed discussion of $\Phi_p$ will be provided in the next section.  
\begin{proposition} \label{lemma-epsilon} Let $E/\Q$ be a non-CM elliptic curve and $\ell>37$ be a non-surjective prime of $E$. Then, the following statements hold.
\begin{enumerate}

    \item \label{lemma-epsilon-unramified}  $\varepsilon_\ell$ is surjective and is ramified only at primes $p$ of additive, potentially good reduction of $E$ or $p\in \{2, 3\}$. More precisely, for each $\Frob_q\in \Gal(\ol{\Q}/\Q)$ with $q\nmid \ell N_E$,  
we have
   $\varepsilon_{\ell}(\Frob_q) =   \kronecker{D_\ell}{q}$, where  $D_\ell$ is an integer supported at primes $p$ satisfying one of:   
   \begin{itemize}
       \item $p$ is a prime of additive, potentially good reduction of $E$ such that $|\Phi_p|=4$,
       \item $p=\ell$  is a prime of additive, potentially good reduction of $E$, 
       \item $p\in \{2, 3\}$.
   \end{itemize}
    \item \label{lemma-epsilon-ap} For each prime $p \nmid N_E$ such that $\varepsilon_\ell(\Frob_p) = -1$, we have $\ell\mid a_p(E)$. 
\end{enumerate}
\end{proposition}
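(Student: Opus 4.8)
The plan is to treat parts (ii) and (i) separately; (ii) is immediate and (i) carries the work. For (ii), fix an $\F_\ell$-linear isomorphism $\F_\ell^2\simeq\F_{\ell^2}$ identifying $\mathcal{C}_{ns}(\ell)$ with $\F_{\ell^2}^\times$ acting by multiplication and $\mathcal{C}_{ns}^+(\ell)\setminus\mathcal{C}_{ns}(\ell)$ with the $\F_\ell$-linear maps $x\mapsto c\,x^\ell$, $c\in\F_{\ell^2}^\times$. In a basis $\{1,\omega\}$ with $\omega^2=d\in\F_\ell^\times$ a non-square, writing $c=c_0+c_1\omega$ the map $x\mapsto c\,x^\ell$ has matrix $\begin{pmatrix}c_0&-c_1 d\\ c_1&-c_0\end{pmatrix}$, so every $g\in\mathcal{C}_{ns}^+(\ell)\setminus\mathcal{C}_{ns}(\ell)$ has $\tr g=0$ (and $\det g=-\Norm(c)$, $\Norm$ the norm to $\F_\ell$). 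Hence if $p\nmid N_E$ (so $p\neq\ell$ by our convention, and $\bar\rho_{E,\ell}$ is unramified at $p$) and $\varepsilon_\ell(\Frob_p)=-1$, then $\bar\rho_{E,\ell}(\Frob_p)\in\mathcal{C}_{ns}^+(\ell)\setminus\mathcal{C}_{ns}(\ell)$, so $a_p(E)\equiv\tr\bar\rho_{E,\ell}(\Frob_p)\equiv 0\pmod\ell$, which is (ii).

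For (i), surjectivity of $\varepsilon_\ell$ is immediate from $\im(\bar\rho_{E,\ell})\subseteq\mathcal{C}_{ns}^+(\ell)$ together with $\im(\bar\rho_{E,\ell})\not\subseteq\mathcal{C}_{ns}(\ell)$. Writing the primitive quadratic character $\varepsilon_\ell$ as $\kronecker{D_\ell}{\cdot}$ for a fundamental integer $D_\ell$, the primes dividing $D_\ell$ are exactly those at which $\varepsilon_\ell$ ramifies, and $\varepsilon_\ell$ ramifies at $q$ if and only if $\bar\rho_{E,\ell}(I_q)\not\subseteq\mathcal{C}_{ns}(\ell)$. If $q\nmid\ell N_E$ then $E$ has good reduction at $q$, so $\bar\rho_{E,\ell}(I_q)=\{1\}\subseteq\mathcal{C}_{ns}(\ell)$ and $q\nmid D_\ell$; this yields the displayed formula $\varepsilon_\ell(\Frob_q)=\kronecker{D_\ell}{q}$ for $q\nmid\ell N_E$ and shows $D_\ell$ is supported on $\ell$ and the bad primes of $E$. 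What remains is to discard every $q\mid N_E$ with $q\notin\{2,3,\ell\}$ that is not of additive, potentially good reduction with $|\Phi_q|=4$, and to control the prime $\ell$.

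The refinement rests on two facts about $G:=\mathcal{C}_{ns}^+(\ell)$: (a) $|G|=2(\ell^2-1)$ is prime to $\ell$, so $G$ has no nontrivial unipotent element; and (b) every $g\in G\setminus\mathcal{C}_{ns}(\ell)$ satisfies $g^2=\Norm(c)\cdot I$ and $\det g=-\Norm(c)$, so a determinant-$1$ element of $G\setminus\mathcal{C}_{ns}(\ell)$ squares to $-I$ and hence has order exactly $4$. For $q\neq\ell$ the cyclotomic character $\det\bar\rho_{E,\ell}$ is unramified at $q$, so $\bar\rho_{E,\ell}(I_q)$ lies in the determinant-$1$ part of $G$. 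Now run through the reduction types at $q\mid N_E$, $q\notin\{2,3,\ell\}$: if $q$ is of multiplicative reduction, $\bar\rho_{E,\ell}(I_q)$ is unipotent, hence trivial by (a); if $q$ is of additive, potentially multiplicative reduction, then for the ramified quadratic character $\chi$ with $E\otimes\chi$ multiplicative at $q$, the representation $\bar\rho_{E,\ell}\otimes\chi$ still has image in $G$ (as $-I\in G$) and $(\bar\rho_{E,\ell}\otimes\chi)(I_q)$ is unipotent, hence trivial by (a), forcing $\bar\rho_{E,\ell}(I_q)\subseteq\{\pm I\}\subseteq\mathcal{C}_{ns}(\ell)$; in both cases $\varepsilon_\ell$ is unramified at $q$. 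If $q$ is of additive, potentially good reduction (so $q\geq 5$), then $\bar\rho_{E,\ell}(I_q)\cong\Phi_q$ is cyclic of order $|\Phi_q|\in\{2,3,4,6\}$; a cyclic group of order $2$, $3$, or $6$ has no element of order $4$, so if $|\Phi_q|\neq4$ then by (b) no element of $\bar\rho_{E,\ell}(I_q)$ lies in $G\setminus\mathcal{C}_{ns}(\ell)$, whence $\bar\rho_{E,\ell}(I_q)\subseteq\mathcal{C}_{ns}(\ell)$ and $\varepsilon_\ell$ is unramified at $q$. Thus, for $q\notin\{2,3,\ell\}$, $\varepsilon_\ell$ ramifies at $q$ only if $q$ is of additive, potentially good reduction with $|\Phi_q|=4$. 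For $q=\ell$ the cyclotomic determinant is ramified and this argument breaks; there I would use Serre's description of the restriction of $\bar\rho_{E,\ell}$ to $I_\ell$ and to a decomposition group at $\ell$ (cf. \cite{Se1972}): if $\ell$ is of good ordinary, multiplicative, or additive potentially multiplicative reduction then that restriction is reducible, and after killing the unipotent part via (a) the image $\im(\bar\rho_{E,\ell})$ would contain a non-scalar element with two distinct $\F_\ell$-rational eigenvalues whose quotient generates $\F_\ell^\times$, which lies in no conjugate of $G$ — a contradiction; so $\ell$ is of good supersingular reduction, where $\bar\rho_{E,\ell}(I_\ell)$ lies in a non-split Cartan and $\varepsilon_\ell$ is unramified at $\ell$, or $\ell$ is of additive potentially good reduction. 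Combining these cases pins the support of $D_\ell$ to the asserted set $\{2,3\}\cup\{q\geq5:\ q\text{ additive pot.\ good},\ |\Phi_q|=4\}\cup\{\ell\ \text{if }\ell\text{ additive pot.\ good}\}$.

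The main obstacle is the local Galois-theoretic input rather than the global assembly: one must correctly identify $\bar\rho_{E,\ell}(I_q)$ with the cyclic group $\Phi_q$ at additive, potentially good primes $q\geq5$ — this is precisely where $\Phi_p$, and with it the hypothesis $|\Phi_p|\neq4$ in \eqref{inert-cond}, enters the statement — and one must handle the prime $q=\ell$, where the cyclotomic determinant is ramified, using Serre's classification of the local representation at $\ell$ to exclude the reducible reduction types. Once those local facts are in hand, the conclusion is the bookkeeping driven by the two observations (a) and (b) about $\mathcal{C}_{ns}^+(\ell)$.
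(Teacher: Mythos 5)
Your proof of (ii) is the same as the paper's: every element of $\mathcal{C}_{ns}^+(\ell)\setminus\mathcal{C}_{ns}(\ell)$ has trace zero, so $\varepsilon_\ell(\Frob_p)=-1$ forces $a_p(E)\equiv 0\pmod\ell$. For (i), your overall plan matches the paper's — surjectivity of $\varepsilon_\ell$ from $\im(\bar\rho_{E,\ell})\subseteq\mathcal{C}_{ns}^+(\ell)$ but $\not\subseteq\mathcal{C}_{ns}(\ell)$ (a deep fact; the paper cites \cite[Theorem~1.8]{FuLo2023}, so calling it ``immediate'' deserves at least a citation), followed by a case analysis over reduction types — but the local work is carried out differently. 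Where the paper defers to references (Serre for multiplicative and integral-$j$ cases, and \cite[Prop.~3.3]{LemosII2019} for $q=\ell$), you rederive the needed facts from the two elementary observations about $\mathcal{C}_{ns}^+(\ell)$: order coprime to $\ell$, and determinant-one outer elements squaring to $-I$. Your twisting argument for additive, potentially multiplicative primes $q\ne\ell$ is a genuinely different mechanism from the paper's (which uses $|\Phi_q|=2$ and the claim that $-I$ is the only order-$2$ element in $\mathcal{C}_{ns}^+(\ell)$; note the paper's version actually needs the additional observation $\det\bar\rho_{E,\ell}(I_q)=1$ to rule out the $\ell+1$ order-two outer elements of determinant $-1$, a constraint your unipotent argument sidesteps). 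Your $q=\ell$ treatment is the most compressed step: you are in effect reproving the input the paper takes from Lemos. The sketch has the right shape — in the reducible cases one finds a semisimple element of inertia with eigenvalue ratio a primitive root mod $\ell$, which cannot sit in $\mathcal{C}_{ns}^+(\ell)$ since the outer coset has trace zero (ratio $-1$) and $\mathcal{C}_{ns}(\ell)$ has Frobenius-conjugate eigenvalues — but to be a real proof you would need to spell out why the restriction to $I_\ell$ has the asserted triangular form in each of the ordinary/multiplicative/additive pot.\ mult.\ cases, and why in the supersingular case the non-split Cartan containing $\bar\rho_{E,\ell}(I_\ell)$ coincides with the fixed one (order bound on outer-coset elements does this). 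Net: correct, and somewhat more self-contained than the paper's version, at the price of a $q=\ell$ step that currently reads as a pointer to Serre rather than a complete argument.
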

\begin{proof}
i). The surjectivity follows from \cite[Theorem 1.8]{FuLo2023}. Assume $p\neq \ell$ and $p\mid D_\ell$. Since the statements above are known if $E$ is semistable or if $j(E)$ is integral \cite[Lemma 2, p. 295 and \S 5.8, p. 317]{Se1972}, we only need to assume $p$ is a prime of additive, potentially multiplicative reduction. Since the proof can be performed locally, we may assume $E$ is an elliptic curve over the local field $\Q_p$. 
   
   From \cite[(b), p. 312]{Se1972}, we know that  $|\Phi_p|=2$. Hence,  each matrix in  $\ol{\rho}_{E, \ell}(I_p)\simeq \Phi_p$ has order $2.$ Note that the only possible matrix of order $2$ in $\mathcal{C}_{ns}^+(\ell)$ is $-I (\in \mathcal{C}_{ns}(\ell))$. This implies $\varepsilon_{\ell}(I_p)=1$ and $\varepsilon_\ell$ is unramified at $p$. 

   Next, assume $p=\ell$ and $p\mid D_\ell$. We only need to exclude that $p$ is a prime of  additive, potentially multiplicative reduction. This follows from \cite[Proposition 3.3]{LemosII2019}.

    ii) This is because if $\varepsilon_\ell(\Frob_p)=-1$, then $\ol{\rho}_{E, \ell}(\Frob_p)\in \mathcal{C}_{ns}^+(\ell)\backslash \mathcal{C}_{ns}(\ell)$. Hence $\tr(\ol{\rho}_{E, \ell}(\Frob_p))=0\in \F_\ell$. The result follows.  
\end{proof}


\subsection{Proof of \Cref{thm-number} }\label{subsec:inertial}
 In this section, we first prove  \Cref{quadratic-character-conductor} that gives refined ramification information of  $\varepsilon_{\ell}$. This requires some background including  the local  $\ell$-adic  Galois representations,  the Weil-Deligne representations of elliptic curves, and a  classification result on the inertial types of $E$ at primes of additive reduction of $E$. Afterwards, we will give the proof of \Cref{thm-number} using \cref{quadratic-character-conductor}. Throughout, we assume $\ell\neq p$ and retain some notation from the earlier sections. 

\subsubsection{Local Galois representations}\label{sec:local-Galois}

Let $E$ be an elliptic curve defined over the local field $\Q_p$. Assume $p$ is a prime of additive reduction of $E$. In other words,  $p$ is either a prime of  potentially good reduction or potentially multiplicative reduction of $E$ \cite[Proposition 5.4, p. 197]{Si2009}. The (local) $\ell$-adic Galois representation of $E$ is defined similarly as in the  global case:  
\[
\rho_{E, \ell}:\Gal(\ol{\Q}_p/\Q_p)\to \Aut(\varprojlim_nE[\ell^n])\simeq \GL_2(\Z_\ell),
\]
where $E[\ell^n]$ is the group of $\ell^n$-torsion points of $E$. Similarly, we can define the mod $\ell$ Galois representation $\ol{\rho}_{E, \ell}$. 
 We will recall some facts about the restriction of $\rho_{E, \ell}$  on the inertia group $I_p$ (up to conjugation) of $\Gal(\ol{\Q}_p/\Q_p)$ at  $p$.

 If $E$ is not semistable at $p$, then there is a nontrivial finite subgroup $\Phi_p$  that measures the failure of semistability of $E$ at $p$, defined as follows.  By \cite{SeTa1968}, the action of $I_p$ on $E[\ell]$  factors through a finite quotient group $\Phi_p$ of $I_p$, which produces the sequence 
\[
\ol{\rho}_{E, \ell}\mid_{I_p}:  I_p \twoheadrightarrow \Phi_p \hookrightarrow \Aut (E[\ell]).
\]
Therefore, $\Phi_p\simeq \bar{\rho}_{E,\ell}(I_p)\simeq \operatorname{Gal}(L/\Q_p^{ur})$, where $\Q_p^{ur}$ is the  maximal unramified extension of $\Q_p$ and $L$ is the  smallest extension
of $\Q_p^{ur}$, over which $E$ attains good reduction. It turns out that $\Phi_p$ can be identified as a subgroup of  $\Aut_{\ol{\F}_p}(E_p)$, where $E_p$ is the reduction of $E$ at a prime of $L$ above $p$. 

 We  now assume $p$ is a prime of additive, potentially good reduction. 
If $p\neq 2, 3$, 
then $\Phi_p$ is cyclic and  
\begin{enumerate}
\item $|\Phi_p|=2 \Leftrightarrow v_p(\Delta)= 6 \Leftrightarrow E \text{ is of type $\text{I}_0^*$};
$
\item  $|\Phi_p|=3 \Leftrightarrow v_p(\Delta)= 4, 8 \Leftrightarrow E \text{ is of type $\text{IV}$ or $\text{IV}^*$};
$
\item $|\Phi_p|=4 \Leftrightarrow v_p(\Delta)= 3, 9 
\Leftrightarrow E \text{ is of type $\text{III}$ or $\text{III}^*$},
$
\item $|\Phi_p|=6 \Leftrightarrow v(\Delta)= 2, 10 
\Leftrightarrow E \text{ is of type $\text{II}$ or $\text{II}^*$},
$
\end{enumerate}
where $v_p(\Delta)$ is the  valuation of the minimal discriminant $\Delta$ of $E$.
For $p=2, 3$, we have 
\begin{enumerate}
    \item if $p=3$, then $\Phi_p\simeq \Z/3\Z \rtimes \Z/4\Z$; 
    \item if $p=2$, then  $\Phi_p\simeq Q_8$, where $Q_8$ is the quaternion group of order 8;
    \item if $p=2$, then $\Phi_p\simeq \SL_2(\F_3)$. 
\end{enumerate}
We refer the reader to  \cite[5.6]{Se1972} or \cite{Kr1995} for more details.

\subsubsection{Weil-Deligne representations}\label{sec:weil-deligne}
Next, we  recall some results on Weil-Deligne representations of elliptic curves.  Most of the statements can be found in \cite{Se1972, DeFrVo2024}.

Let $p$ be a rational prime, $F/\Q_p$ be a finite extension, and $\ol{F}$ the algebraic closure of $F$. The Weil group $W_F$ of  $F$ is a locally
compact totally disconnected subgroup of the absolute Galois group  $G_F:=\Gal(\ol{F}/F)$. Let $\pi$ be a uniformizer of $F$, $F_\pi$ the Frobenius element of $W_F$, and $q$ be the size of the residue field. Let $I_F$ be the inertia group of $G_F$. Then, by local class field theory, for any $g\in W_F$, there is a unique $n\in \Z$ such that $g=F_\pi^n g_0$ for some $g_0\in I_F$. We define $|g|:=q^{-n}$.  Then, a two dimensional Weil-Deligne representation is a pair $(\rho, N)$ such that 
\begin{enumerate}
\item $\rho_: W_F\to \GL_2(\CC)$ is a finite-dimensional, semisimple, smooth, complex representation of  $W_F$ with open kernel; 
\item $N\in \GL_2(\CC)$ is nilpotent and satisfies 
\[
\rho(g) N\rho(g)^{-1}=|g| N \; \; \forall g \in W_F.
\]
\end{enumerate}
The classification of two dimensional Weil-Deligne representations are well-understood. Any such representation is one of the principal series representations, special (Steinberg) representations, and supercuspidal representations. 

It is often useful to study refined  classification results by examining their restrictions to inertia subgroups. 
An {\emph{inertial Weil-Deligne type}} is an equivalence class $[\rho, N]$ of $(\rho, N)$, where  the equivalence condition is given by  $(\rho, N)\sim (\rho', N')$ if and only if there is $P\in \GL_2(\CC)$ such that $\rho'(g)= P \rho(g) P^{-1}$ and $N'= P \rho(g) N^{-1}$ for all $g\in I_F$. An equivalence class $[\rho, N]$ is determined by the pair $(\tau, N)$, where $\tau=\rho\mid_{I_F}$ with $(\rho, N)$ a Weil-Deligne representation. 

 Given an elliptic curve $E/F$, where $F$ has residual characteristic $p$, we can attach  to $E$  a 
Weil-Deligne representation $(\rho_{E}, N)$,   where $\rho_E$ is induced from the $\ell$-adic Galois representation $\rho_{E, \ell}: G_F\to \GL_2(\Q_\ell)$ by fixing an embedding $\iota: \Q_\ell \to \CC$ and  
$N=0$ or $\begin{pmatrix} 0 & 1 \\ 0 & 0 \end{pmatrix}$, depending on if $p$ is a prime of  potentially good or potentially  multiplicative reduction \cite[Sections 4, 14, and 15]{Ro1994}. The Weil-Deligne representation  $(\rho_{E}, N)$ attached to $E/F$ is independent of the choice of $\ell$ and $\iota$.  We define the inertial Weil-Deligne type $\tau_E$ of $E$ as the equivalence class of $[\rho_{E}, N]$. 

We assume $E/F$ has  additive, potentially good reduction.
The following classification of the inertial Weil-Deligne type  $\tau_E$  holds, where we use some terminologies and notation in \cite[Section 2.5]{DeFrVo2024}. 
\begin{lemma}[Proposition 4.2.1\cite{DeFrVo2024}]\label{Phi_p=4}
 Let $p\geq 5$ be a prime. Let $E/\Q_p$ be an elliptic curve of additive, potentially good reduction such that $|\Phi_p|=4$. Let $\tau_E$ be the inertial Weil-Deligne type of $E$. 
\begin{enumerate}
\item If $p\equiv 1\pmod 4$, then $\tau_E$ is a principal series representation.
\item If $p\equiv 3\pmod 4$, then  $\tau_E$ is a nonexceptional supercuspidal representation.
\end{enumerate}
Moreover explicitly,
\begin{enumerate}
    \item If $p\equiv 1\pmod 4$,  then $\tau_E \simeq \tau_{ps}(1,1,4)$. In this case, 
    $\tau_E=\chi\mid_{I_p}\oplus   \chi^{-1}\mid_{I_p}$, where $\chi: W_{\Q_p}\to \mathbb{C}^\times$ is a cyclotomic character of order 4.
    \item If $p\equiv 3\pmod 4$, then $\tau_E \simeq \tau_{sc}(u,2,4)$, where $u\in \Z_p^\times$ is a nonsquare. In this case,
    $\tau_E=\chi\mid_{I_K}\oplus   \chi^{-1}\mid_{I_K}$, where $K=\Q_p(\sqrt{u})$ and $\chi: W_{K}\to \mathbb{C}^\times$ is a cyclotomic character of order $4.$
\end{enumerate}

\end{lemma}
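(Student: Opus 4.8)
This is \cite[Proposition 4.2.1]{DeFrVo2024}, so strictly speaking the ``proof'' is a reference; but the mechanism is short, and I sketch the plan one would follow to reprove it. The starting point is that $E/\Q_p$ has additive, \emph{potentially good} reduction, so the monodromy operator $N$ of the Weil--Deligne representation $(\rho_E,N)$ of $E$ vanishes and the inertial type is simply $\tau_E=\rho_E|_{I_p}$ (for the fixed embedding $\iota$). Moreover, by the Néron--Ogg--Shafarevich criterion recalled in \S\ref{sec:local-Galois}, the image of $I_p$ under $\rho_{E,\ell}$ — hence, transported by $\iota$, the image of $\tau_E$ — is the group $\Phi_p$, which for $p\geq 5$ is cyclic and which is of order $4$ by hypothesis; thus $\Phi_p\simeq\Z/4\Z$.

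The plan is then to decompose $\tau_E$ and follow the Frobenius action. Since $I_p$ acts through the abelian group $\Phi_p$, over $\CC$ we may write $\tau_E=\psi\oplus\psi'$ with $\psi,\psi'$ characters of $I_p$. The determinant of the $\ell$-adic representation of $E$ is the cyclotomic character, which is unramified at $p$ because $p\neq\ell$; hence $\det\tau_E$ is trivial on $I_p$ and $\psi'=\psi^{-1}$. As the image of $\tau_E$ has order $4$, the character $\psi$ has \emph{exact} order $4$, so $\psi\neq\psi^{-1}$. Both characters factor through the tame quotient $I_p^{\mathrm t}$, on which a Frobenius lift acts by $t\mapsto t^p$; therefore the Frobenius-conjugate of $\psi$ is $\psi^p$ (up to the harmless inversion convention), which equals $\psi$ if $p\equiv 1\pmod 4$ and $\psi^{-1}$ if $p\equiv 3\pmod 4$.

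I would then split into the two congruence classes. If $p\equiv 1\pmod 4$, Frobenius fixes $\psi$ and $\psi^{-1}$, so each $I_p$-eigenline of $\tau_E$ is stable under all of $W_{\Q_p}$; choosing the Frobenius eigenvalues exhibits $\rho_E$ as a sum of two characters of $W_{\Q_p}$, one of which restricts to $\psi$ on $I_p$ and thus has order $4$ there — a principal series inertial type, which one checks is $\tau_{ps}(1,1,4)$ in the normalization of \cite[\S2.5]{DeFrVo2024}. If $p\equiv 3\pmod 4$, Frobenius interchanges the two eigenlines, so $\rho_E$ is irreducible over $W_{\Q_p}$ and the stabilizer of one eigenline is the index-two subgroup $W_K$, where $K=\Q_p(\sqrt u)$ with $u\in\Z_p^\times$ a non-square (the unramified quadratic extension of $\Q_p$); hence $\rho_E\simeq\Ind_{W_K}^{W_{\Q_p}}\chi$ with $\chi|_{I_K}=\psi$ of order $4$, whose projective image is dihedral — a non-exceptional supercuspidal inertial type, identified with $\tau_{sc}(u,2,4)$.

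The dichotomy itself is essentially forced by how a Frobenius acts on a tamely ramified character of order $4$, so I do not expect that to be the obstacle. The genuine work — and the reason one defers to \cite[\S4]{DeFrVo2024} — is matching the two outcomes precisely with the normalized types $\tau_{ps}(1,1,4)$ and $\tau_{sc}(u,2,4)$: this requires pinning down the order-$4$ character $\chi$ (equivalently, the quartic field cut out by $\psi$) and checking the conductor and parameter bookkeeping in their classification, together with the structural input from \S\ref{sec:local-Galois} that $|\Phi_p|=4$ is exactly the case of Kodaira type $\mathrm{III}$ or $\mathrm{III}^*$ and that $\Phi_p$ is cyclic when $p\geq 5$.
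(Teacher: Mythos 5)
Your proposal is correct and consistent with the paper, which itself offers no proof and simply cites \cite[Proposition 4.2.1]{DeFrVo2024}. Your sketch of the underlying mechanism — $N=0$ since reduction is potentially good, $\tau_E=\psi\oplus\psi^{-1}$ with $\psi$ of exact order $4$ and trivial determinant on $I_p$ (via the unramified cyclotomic determinant), the Frobenius action $\psi\mapsto\psi^p$ on the tame quotient producing the $p\bmod 4$ dichotomy, decomposability for $p\equiv1$ and induction from the unramified quadratic extension $W_K$ for $p\equiv3$ — is sound, and you rightly flag that the remaining work is the bookkeeping needed to identify the result with the normalized types $\tau_{ps}(1,1,4)$ and $\tau_{sc}(u,2,4)$ of the cited reference.
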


\subsubsection{The proofs}
 The following result is a refinement of \Cref{lemma-epsilon}, because it gives a if and only if condition for the ramified primes of $\varepsilon_\ell$.  
\begin{lemma}\label{quadratic-character-conductor}
    Let $E/\Q$ be a non-CM  elliptic  curve. Let $\ell>37$ be a prime such that $\ol{\rho}_{E, \ell}$ is nonsurjective and $\ell\equiv 1\pmod 4$. Then $\varepsilon_{ \ell}$ is ramified at a prime $p\neq \ell$  if and only if  $p$ is a prime  of additive,  potentially good reduction with  $|\Phi_p|=4$ or  $p\in \{2, 3\}$.
\end{lemma}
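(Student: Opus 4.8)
The plan is to upgrade \Cref{lemma-epsilon}(i), which already gives one direction (if $\varepsilon_\ell$ is ramified at $p \neq \ell$, then $p \in \{2,3\}$ or $p$ is a prime of additive, potentially good reduction with $|\Phi_p| = 4$), by establishing the converse: if $p \notin \{2,3\}$ is a prime of additive, potentially good reduction with $|\Phi_p| = 4$, then $\varepsilon_\ell$ \emph{is} ramified at $p$. As in the proof of \Cref{lemma-epsilon}, the argument is local, so I would fix such a $p \geq 5$ and regard $E$ as an elliptic curve over $\Q_p$. Since $\ell \equiv 1 \pmod 4$, the key structural input is \Cref{Phi_p=4}(i): the inertial Weil--Deligne type is $\tau_E \simeq \tau_{ps}(1,1,4)$, i.e. $\tau_E = \chi|_{I_p} \oplus \chi^{-1}|_{I_p}$ where $\chi \colon W_{\Q_p} \to \CC^\times$ is a cyclotomic character of order $4$. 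In particular $\bar\rho_{E,\ell}(I_p) \simeq \Phi_p$ is cyclic of order $4$, generated by an element whose eigenvalues are primitive $4$th roots of unity, hence conjugate to a diagonal matrix $\mathrm{diag}(i,-i)$ (equivalently, an element of order $4$ in a Cartan subgroup of $\GL_2(\F_\ell)$; this uses $\ell \equiv 1 \pmod 4$ so that $\F_\ell$ contains a primitive $4$th root of unity, placing the relevant Cartan in split form and matching the principal-series shape of $\tau_E$).

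The next step is to locate $\bar\rho_{E,\ell}(I_p)$ inside the normalizer $\mathcal{C}_{ns}^+(\ell)$ of the non-split Cartan and compute $\varepsilon_\ell$ on it. Since $\ell > 37$ and $\bar\rho_{E,\ell}$ is non-surjective, $\im(\bar\rho_{E,\ell})$ is conjugate into $\mathcal{C}_{ns}^+(\ell)$ but not into $\mathcal{C}_{ns}(\ell)$. The subgroup $\bar\rho_{E,\ell}(I_p)$ is cyclic of order $4$ inside $\mathcal{C}_{ns}^+(\ell)$. The crucial observation is that a cyclic subgroup of order $4$ cannot lie entirely in $\mathcal{C}_{ns}(\ell)$: the non-split Cartan $\mathcal{C}_{ns}(\ell) \cong \F_{\ell^2}^\times$ is cyclic of order $\ell^2 - 1$, so it does contain elements of order $4$ when $4 \mid \ell^2 - 1$ (always true) — so I need a finer argument. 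The right point is to compare the \emph{two} possible conjugacy classes of order-$4$ elements: those of order $4$ in $\mathcal{C}_{ns}(\ell) \cong \F_{\ell^2}^\times$ have eigenvalues that are Galois-conjugate over $\F_\ell$ (so when $\ell \equiv 1 \pmod 4$, i.e. $4 \mid \ell - 1$, these eigenvalues lie in $\F_\ell$ and are $\{i, -i\}$ — fine), whereas the generator of $\bar\rho_{E,\ell}(I_p)$ coming from the principal-series type $\tau_{ps}(1,1,4)$ has the property, via \Cref{Phi_p=4}(i), that it is the image of $\chi$ restricted to inertia with $\chi$ of order exactly $4$, and one must track whether $\varepsilon_\ell$ kills it. I would instead argue directly: $\varepsilon_\ell$ restricted to $I_p$ is the composite $I_p \twoheadrightarrow \Phi_p \hookrightarrow \mathcal{C}_{ns}^+(\ell) \to \mathcal{C}_{ns}^+(\ell)/\mathcal{C}_{ns}(\ell) \cong \{\pm 1\}$; since $\Phi_p$ is cyclic of order $4$ and $\mathcal{C}_{ns}(\ell) \cap \Phi_p$ is either all of $\Phi_p$ or its unique index-$2$ subgroup, $\varepsilon_\ell|_{I_p}$ is trivial \emph{iff} $\Phi_p \subseteq \mathcal{C}_{ns}(\ell)$. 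So the task reduces to: \textbf{show $\Phi_p$ is not contained in the non-split Cartan} when $\ell \equiv 1 \pmod 4$.

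To rule out $\Phi_p \subseteq \mathcal{C}_{ns}(\ell)$, the cleanest route is the determinant/eigenvalue dichotomy forced by the Weil--Deligne classification together with the congruence on $\ell$. An order-$4$ element of $\mathcal{C}_{ns}(\ell) \cong \F_{\ell^2}^\times$ either has order $4$ with eigenvalues $\{\zeta, \zeta^\ell\}$ for $\zeta$ a primitive $4$th root of unity in $\overline{\F_\ell}$; when $\ell \equiv 1 \pmod 4$ these are both in $\F_\ell$, so such an element is already \emph{split}-diagonalizable over $\F_\ell$, and in fact any such element of $\mathcal{C}_{ns}(\ell)$ lies in $\mathcal{C}_{ns}(\ell) \cap (\text{split torus})$ — which for $\ell \equiv 1\pmod 4$ one checks is just the central element of order... here I would invoke the precise description: the order-$4$ elements of $\mathcal{C}_{ns}(\ell)$ with $\F_\ell$-rational eigenvalues are exactly $\{\pm J\}$ for a fixed $J$ with $J^2 = -I$, and since they are $\F_\ell$-rational and lie in the non-split Cartan, they must be \emph{central} in $\GL_2(\F_\ell)$ — but $\mathrm{diag}(i,-i)$ is not central (as $i \neq -i$ for $\ell > 2$), contradiction. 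Therefore the generator of $\Phi_p$, whose eigenvalues are $i$ and $-i$ by \Cref{Phi_p=4}(i) (these being the values of the order-$4$ cyclotomic character $\chi$ and its inverse), cannot lie in $\mathcal{C}_{ns}(\ell)$, so $\Phi_p \not\subseteq \mathcal{C}_{ns}(\ell)$, hence $\varepsilon_\ell|_{I_p} \neq 1$ and $\varepsilon_\ell$ is ramified at $p$. The main obstacle I anticipate is precisely this last comparison — pinning down, with the classification of \Cref{Phi_p=4} in hand and using $\ell \equiv 1 \pmod 4$, that an order-$4$ element with distinct $\F_\ell$-rational eigenvalues is forced out of the non-split Cartan; this requires being careful about the embedding $\Phi_p \cong \bar\rho_{E,\ell}(I_p) \hookrightarrow \mathcal{C}_{ns}^+(\ell)$ and the fact that $\mathcal{C}_{ns}(\ell)$, being non-split, contains no non-central element with both eigenvalues in $\F_\ell$. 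For $p = \ell$ no claim is made (it is excluded in the statement), so no additional work is needed there, and the primes $p \in \{2,3\}$ are already handled by being listed as exceptional in both directions.
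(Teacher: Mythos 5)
Your argument for the main case ($p\geq 5$ of additive, potentially good reduction with $|\Phi_p|=4$) is correct and is essentially the paper's approach: both proofs combine \Cref{Phi_p=4}(i) (pinning the eigenvalues of the generator of $\bar\rho_{E,\ell}(I_p)$ to $\{i,-i\}$, a pair of distinct $\F_\ell$-rational values since $\ell\equiv 1\pmod 4$) with a structural fact about $\mathcal{C}_{ns}(\ell)$ that forces $\bar\rho_{E,\ell}(I_p)\not\subseteq\mathcal{C}_{ns}(\ell)$. The paper phrases the incompatibility as a trace computation (an order-$4$ element $\begin{pmatrix}a&\varepsilon c\\c&a\end{pmatrix}\in\mathcal{C}_{ns}(\ell)$ must have $a\neq 0$, hence trace $2a\neq 0$, contradicting $\tr=\chi(\sigma)+\chi(\sigma)^{-1}=0$), while you phrase it via eigenvalues (any element of $\mathcal{C}_{ns}(\ell)$ with both eigenvalues in $\F_\ell$ is scalar, but $\mathrm{diag}(i,-i)$ is not scalar). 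These are two faces of the same observation; your version is slightly more conceptual and would compile to an equally correct proof once the meandering at the middle of the paragraph is tidied up.

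However, there is a genuine gap at the end: you dismiss the primes $p\in\{2,3\}$ with ``already handled by being listed as exceptional in both directions, so no additional work is needed there.'' This is incorrect reasoning. The ``if'' direction of the biconditional still asserts that for $p\in\{2,3\}$ (a prime of additive, potentially good reduction), $\varepsilon_\ell$ \emph{is} ramified at $p$; this requires proof and does not follow formally from the statement. The paper establishes it via a different mechanism than the $|\Phi_p|=4$ case: for $p\in\{2,3\}$ of additive, potentially good reduction, the classification in \S\ref{sec:local-Galois} shows $\Phi_2$ and $\Phi_3$ are \emph{non-abelian} (quaternion $Q_8$, $\SL_2(\F_3)$, or $\Z/3\Z\rtimes\Z/4\Z$), while $\mathcal{C}_{ns}(\ell)\cong\F_{\ell^2}^\times$ is abelian; hence $\bar\rho_{E,\ell}(I_p)\cong\Phi_p$ cannot lie in $\mathcal{C}_{ns}(\ell)$, so $\varepsilon_\ell|_{I_p}\neq 1$. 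Without this non-abelianness argument (or some replacement), the converse direction at $p\in\{2,3\}$ is left unproved, and your overall proof is incomplete.
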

\begin{proof}
The only if direction follow from part i) of \Cref{lemma-epsilon}. 
For the other direction,  we take $p\notin \{ 2, 3\}$ and assume $p$ is a prime of additive,  potentially good reduction  with  $|\Phi_p|=4$. Hence,   $\ol{\rho}_{E, \ell}(I_p)$ is isomorphic to a cyclic group of order 4 in $\GL_2(\F_\ell)$. 

Recall that by fixing a nonsquare element $\varepsilon \in \F_\ell^\times$, we can write, up to conjugation,   
\[
\mathcal{C}_{ns}(\ell) = \left\{\begin{pmatrix} a & \varepsilon c \\ c & a\end{pmatrix} : a,c \in \F_\ell \text{ and } (a,c) \neq (0,0)\right\},
\; \; 
\mathcal{C}^+_{ns}(\ell) = \mathcal{C}_{ns}(\ell) \cup \begin{pmatrix} 1 & 0 \\ 0 & -1 \end{pmatrix} \mathcal{C}_{ns}(\ell).
\]
Note that $\varepsilon_{\ell}$ is unramified at $p$ if and only if  $\varepsilon_{ \ell}$ is trivial when restricted to $I_p$, which is equivalent to say that
$
\ol{\rho}_{E, \ell}(I_p)\subseteq \mathcal{C}_{ns}(\ell) \simeq \F_{\ell^2}^{\times}.
$
Assume  for the sake of contradiction, $\ol{\rho}_{E, \ell}(I_p)$ is generated by 
\[
g=\begin{pmatrix} a & \varepsilon c \\ c & a\end{pmatrix} \in \mathcal{C}_{ns}(\ell).
\]
Using the fact that $g$ has order 4, a straightforward computation  implies $a\neq 0$ (otherwise, we would get $\varepsilon c^2=-1$, which is impossible as $\ell\equiv 1 \pmod 4$ and $\varepsilon$ is a nonsquare). In particular, \[
\tr \ g \not \equiv  0\pmod \ell.
\]
On the other hand, we see that 
$
\tr ( \rho_{E, \ell}(\sigma))= 0
$ for any  $\sigma \in I_p$. This is because by Lemma \ref{Phi_p=4},  $\chi(\sigma)+\chi(\sigma)^{-1}=i+(-i)=0$, where $i=\sqrt{-1}\in \Q_\ell\subseteq \CC$. \footnote{Since $\ell\equiv 1\pmod 4$, we can view $\pm i$ sa elements in $\Z_\ell$ upon fixing an  embedding $\Q_\ell \hookrightarrow \CC$.} Hence,  
$
\tr(\ol{\rho}_{E, \ell}(\sigma))\equiv 0\pmod \ell
$
and we reach a contradiction.
Therefore, $\ol{\rho}_{E, \ell}(I_p)\not\subseteq \mathcal{C}_{ns}(\ell)$ and $\varepsilon_{\ell}$ is ramified at $p$.

Now, assume $p\in \{2,  3\}$ and $p$ a prime of additive, potentially good reduction.   We recall  facts from \S \ref{sec:local-Galois} that the groups  $\Phi_2$ and $\Phi_3$ are not abelian. Since $\mathcal{C}_{ns}(\ell)$ is abelian,  the image of $\ol{\rho}_{E, \ell}(I_p)$ can not be contained in $\mathcal{C}_{ns}(\ell)$. So $\varepsilon_{ \ell}$ is always ramified at 2 and 3.
This completes the proof.
\end{proof}

\begin{remark}
We need to assume that $p\neq \ell$ because,  when $p=\ell$ and $p$ is a prime of additive, potentially good reduction of $E$, much less is known about the the group $\Phi_p$. This lack of understanding makes it hard to analyze the ramification of  $\varepsilon_\ell$ at $p$.
\end{remark}

We now end this section with the proof of Theorem \ref{thm-number}.   
\begin{proof}[Proof of Theorem \ref{thm-number}]
For each  $\ell\in \mathcal{L}_E$,  we can associate a primitive quadratic  character $\varepsilon_{\ell}$  for the absolute Galois group $\Gal(\ol{\Q}/\Q)$.   By \Cref{lemma-epsilon} i), we have
\[
\varepsilon_{\ell}(\cdot)=\left( \frac{2^{v_2}3^{v_3}\ell^{v_\ell}D}{\cdot}\right), \text{ where $\displaystyle D=\prod_{\substack{p\mid N_E^{\add}\\p\neq 2, 3, \ell\\
|\Phi_p|=4}} p$, $0\leq v_2 \leq 3$, and  $v_3, v_\ell\in \{0, 1\}$}.
\]   
Further, by part i) of \Cref{lemma-epsilon} and \Cref{quadratic-character-conductor} and the definition of $\mathcal{L}_E$,  we obtain that the character $\varepsilon_{\ell}$ is independent of the choice of $\ell$ whenever $\ell\equiv 1\pmod 4$ and $\ell$ is not a prime of additive, potentially good reduction. 
Hence, from Theorem \ref{thm-uncod}, there is a prime $p\nmid N_E$ such that  $p\ll N_E^{\frac{1+\varepsilon}{2}}$ and  
\[
\ell\mid a_p(E) \quad \text{ for all} \quad \ell\in \mathcal{L}_E.
\]
Then by  part ii) of \Cref{lemma-epsilon}, we get 
\[
\prod_{\ell\in \mathcal{L}_E} \ell \mid  |a_p(E)|\leq 2\sqrt{p}\ll N_E^{\frac{1+\varepsilon}{4}}.
\]
Finally, taking the logarithm on both sides and applying the partial summation formula, we obtain the desired bound. 
\end{proof}

\section{Zero density estimates and Linnik type results}\label{Sec:mult-one-and-density}

 In this section, we study  two Linnik-type problems: determining the least prime  $p$ such that $\lambda_p(f)\neq \pm \lambda_p(g)$, where   $f$ and $g$  are holomorphic newforms of fixed weight, and identifying the smallest prime $p$ such that $\lambda_p(f)\neq \chi(p)\lambda_p(f)$, where  $f$ is a holomorphic newform and $\chi$ is a quadratic character. These results will later be applied to the case of non-CM elliptic curves  over $\Q$ that are not $\overline{\Q}$-isogenous. Our main tools are zero density estimates and the explicit formula. 

\subsection{Zero density estimates}\label{subsec: zero density}
Let $\mathbb{A}_{\mathbb{Q}}$ be the ring of ad\`{e}les over $\mathbb{Q}$.  Let $\mathfrak{F}_{n}$ be the set of cuspidal automorphic representations $\pi=\bigotimes_{v} \pi_{v}$ of $\mathrm{GL}_{n}(\mathbb{A}_{\mathbb{Q}})$ with trivial central character. Let $q_{\pi}$ be the arithmetic conductor of $\pi$, $C(\pi)\geq 1$ the analytic conductor of $\pi$, and $\mathfrak{F}_n(Q)=\{\pi\in\mathfrak{F}_n\colon C(\pi)\leq Q\}$. The analytic conductor $C(\pi)$ is a useful measure for the arithmetic and spectral complexity of $\pi$. Let $\mathbbm{1}\in\mathfrak{F}_1$ be the trivial representation, whose $L$-function is the Riemann zeta function $\zeta(s)$.

Let $\pi$ be a  cuspidal automorphic representation of $\GL_n(\mathbb{A}_{\mathbb{Q}}).$ We can associate a degree $n$
$L$-function:
\[L(s,\pi)=\sum_{n\geq1}\frac{\lambda_{\pi}(n)}{n^s}=\prod_{p<\infty}\prod_{i=1}^{n}\left(1-\frac{\alpha_{p,i}}{p^s}\right)^{-1}.\]
which is absolutely convergent for $\Re(s)>1$. We call the conjugate class (of diagonal matrices) $[\diag(\alpha_{p,1},\ldots, \alpha_{p,n})]$ the Satake parameters at $p.$ In particular, when $p\nmid q_{\pi}$, $\alpha_{p,i}\neq0.$ The well-known Ramanuajn conjecture predicts that $|\alpha_{p,i}|= 1$ for all $1 \leq i \leq n.$ This was proved, by Deligne, when $\pi$ corresponds to a holomorphic newform in $\mathfrak{F}_2$.

\begin{remark}
Our normalization for the central characters ensures that $|\mathfrak{F}_n(Q)|$ is finite, and that $L(s,\pi)$ satisfies a functional equation relating to $L(1-s,\widetilde{\pi})$. 
\end{remark}

Let $\pi'$ be another  cuspidal automorphic representation of $\GL_{n'}(\mathbb{A}_{\mathbb{Q}})$. We can define the Rankin-Selberg $L$-function $L(s,\pi\times\pi').$ More precisely, suppose that for $p\nmid q_{\pi}$ (resp. $p\nmid q_{\pi'}$), the Satake parameter of $\pi$ (resp. $\pi'$) is $[\diag(\alpha_{p,1},\ldots,\alpha_{p,n})]$ (resp.  $[\diag(\beta_{p,1},\ldots,\beta_{p,n'})]$), then
\[L(s,\pi\times\pi')=\prod_{p<\infty}L_p(s,\pi_p\times \pi_{p}'),\]
and when $p\nmid q_{\pi}q_{\pi'}$, we have
\[L_p(s,\pi_p\times \pi_{p}')=\prod_{1\leq i\leq n}\prod_{1\leq j\leq n'}\left(1-\frac{\alpha_{p,i}\beta_{p,j}}{p^s}\right)^{-1}.\]
We also define the von Mangoldt function $\Lambda_{\pi\times\pi'}$ associated with $\pi\times\pi'$  via the logarithmic differentiation:
\[-\frac{L'(s,\pi\times\pi')}{L(s,\pi\times\pi')}=\sum_{n=1}^{\infty}\frac{\Lambda_{\pi\times\pi'}(n)}{n^s},\]
provided that $\re(s)>1.$ A direct calculation will show: $\Lambda_{\pi\times\pi'}$ is only supported on prime powers and  when $p\nmid q_{\pi}q_{\pi'}$,
\[\Lambda_{\pi\times\pi'}(p^{\ell})=\log p\sum_{1\leq i\leq n}\sum_{1\leq j\leq n'}\alpha_{p,i}^{\ell}\beta_{p, j}^{\ell}.\]
This implies $\Lambda_{\pi\times\pi'}(p)=\lambda_{\pi}(p)\lambda_{\pi'}(p)\log p.$

Define, for $\sigma\geq 0$ and $T\geq 1$, the quantity
\[
N_{\pi\times\pi'}(\sigma,T)=|\{\rho=\beta+i\gamma\colon L(\rho,\pi\times\pi')=0,~\beta\geq\sigma,~|\gamma|\leq T\}|.
\]
Note that $N_{\pi\times\pi'}(\frac{1}{2},T)$ is roughly $T\log(C(\pi)C(\pi')T)$ via the argument principle and the functional equation, and GRH can be restated as $N_{\pi\times\pi'}(\sigma,T)=0$ for any $\sigma>\frac{1}{2}$.

\begin{theorem}
	\label{thm:ZDE}
	Let $n,n'\geq 1$ and $Q,T\geq 1$.  Let $\mathcal{S}\subseteq\mathfrak{F}_n$, $\mathcal{S}(Q)=\{\pi\in\mathcal{S}\colon C(\pi)\leq Q\}$, and $\pi'\in\mathfrak{F}_{n'}$.  If $\varepsilon>0$ and $\sigma\geq 0$, then
	\[
	\sum_{\pi\in\mathcal{S}(Q)}N_{\pi\times\pi'}(\sigma,T)\ll_{n,n'} \big(|\mathcal{S}(Q)|^4 (C(\pi')QT)^{6.15\max\{n^2,n'n\}})^{1-\sigma+\varepsilon}.
	\]
	In particular, if $n'=1$ and $\pi'=\mathbbm{1}$, then
	\[
	\sum_{\pi\in\mathcal{S}(Q)}N_{\pi}(\sigma,T)\ll_{n} \big(|\mathcal{S}(Q)|^4 (QT)^{6.15n^2})^{1-\sigma+\varepsilon}.
	\]
\end{theorem}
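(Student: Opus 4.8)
\textbf{Proof strategy for Theorem \ref{thm:ZDE}.}

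The plan is to deduce this family zero-density estimate from the work of Humphries and Thorner \cite{HumphriesThorner2024}, which provides a large-sieve-type inequality for Dirichlet coefficients of Rankin-Selberg $L$-functions over a family, and to convert it into a zero-counting bound via the standard mean-value/detection argument. First I would recall the analytic setup: for each $\pi \in \mathcal{S}(Q)$, the completed $L$-function $L(s,\pi\times\pi')$ has analytic conductor $\ll C(\pi)^{n'} C(\pi')^{n} \ll Q^{n'} C(\pi')^{n}$ by the conductor bounds for Rankin-Selberg convolutions (Bushnell-Henniart, and the archimedean analogue), so in the range $|\gamma|\le T$ the number of zeros of each individual $L(s,\pi\times\pi')$ is $\ll \log(C(\pi')QT)$, and the total $\sum_{\pi} N_{\pi\times\pi'}(1/2,T) \ll |\mathcal{S}(Q)| \log(C(\pi')QT)$, which already gives the claim trivially for $\sigma$ near $1/2$. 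The substance is the decay in $\sigma$ as $\sigma\to 1$.

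The main steps in order: (i) detect a zero $\rho=\beta+i\gamma$ with $\beta\ge\sigma$ by a Mellin-type smoothed sum, writing a zero-detecting polynomial $M_\pi(s) = \sum_{m\le x} \mu_{\pi\times\pi'}(m) m^{-s}$ (a truncated inverse Dirichlet series) so that $|L(\rho,\pi\times\pi')M_\pi(\rho) - 1|$ is small when $x$ is a suitable power of the conductor; this is Selberg's classical device adapted to the Rankin-Selberg setting, and it reduces counting zeros to bounding a mean value of $|\sum_{N<n\le 2N}\Lambda_{\pi\times\pi'}(n) n^{-s} e^{-n/y}|$ (or of the coefficients $\lambda_{\pi\times\pi'}(n)$ directly) over $\pi\in\mathcal{S}(Q)$, over $|\gamma|\le T$, and over the relevant dyadic ranges $N$ up to roughly $(|\mathcal{S}(Q)| C(\pi')QT)^{O(\max\{n^2,n'n\})}$. (ii) Apply the large sieve inequality of \cite[Theorem 1.1]{HumphriesThorner2024}: summing $|\sum_{n\sim N} a_n \lambda_{\pi\times\pi'}(n) n^{-it}|^2$ over $\pi\in\mathcal{S}(Q)$ and integrating over $|t|\le T$ is bounded by $(|\mathcal{S}(Q)| N (C(\pi')QT)^{\max\{n^2,n'n\}})^{\varepsilon}\big(|\mathcal{S}(Q)|^{?} (C(\pi')QT)^{?} + N\big)\|a\|_2^2$, with the precise exponents — in particular the factor $|\mathcal{S}(Q)|^4$ and the constant $6.15$ — coming directly from their statement; the fourth power on $|\mathcal{S}(Q)|$ is the hallmark of using the Rankin-Selberg (rather than standard) large sieve, because one pairs $\pi$ with $\pi$ to get $\pi\times\tilde\pi$ inside a second-moment expansion. (iii) Combine (i) and (ii): Hölder/Cauchy-Schwarz over the dyadic decomposition of $M_\pi$ and over the zeros (spacing out the detected zeros by $\gg (\log)^{-1}$ so that a bounded number share each unit interval in $\gamma$) yields $\sum_{\pi}N_{\pi\times\pi'}(\sigma,T) \ll (|\mathcal{S}(Q)|^4 (C(\pi')QT)^{6.15\max\{n^2,n'n\}})^{1-\sigma+\varepsilon}$ after optimizing the length $x$ of the zero-detector — the standard outcome that the density exponent is the logarithm of the ``size'' of the large-sieve bound. (iv) The particular case $n'=1$, $\pi'=\mathbbm{1}$ is immediate: $L(s,\pi\times\mathbbm{1})=L(s,\pi)$, $C(\mathbbm{1})=1$, and $\max\{n^2, n\cdot 1\}=n^2$, so the second displayed inequality drops out of the first.

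The hardest part of the write-up is the bookkeeping in step (i)--(iii) that keeps all the implied exponents honest — in particular verifying that the zero-detector length $x$ can be taken to be exactly a power $(|\mathcal{S}(Q)|^4(C(\pi')QT)^{6.15\max\{n^2,n'n\}})^{1+o(1)}$ so that the final density exponent is $1-\sigma+\varepsilon$ and not merely $A(1-\sigma)+\varepsilon$ for some larger $A$; this requires using the (known, conditional-free) bounds towards Ramanujan for Rankin-Selberg coefficients (Luo-Rudnick-Sarnak-type bounds, or the recent work towards the generalized Ramanujan conjecture) so that the trivial contribution of the ``diagonal'' $N$-range does not inflate the exponent, together with the convexity bound for $L(s,\pi\times\pi')$ on $\Re(s)=1/2$ to handle the edge of the critical strip. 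Since \cite{HumphriesThorner2024} is designed precisely to package these inputs, the cleanest exposition is to cite their large sieve as a black box and carry out only the classical zero-detection reduction, which is what I would do here; the remark that one obtains a \emph{power-saving} error term in the applications (Theorems \ref{unconditional for two elliptic curves thm family} and \ref{thm-uncond-family inertia}) is a consequence of choosing $\sigma$ bounded away from $1$ in the eventual application of this estimate.
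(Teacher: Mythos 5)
The paper's proof is a one-line specialization of \cite[Theorem 1.1]{HumphriesThorner2024}: that result is \emph{already} a family zero-density estimate of exactly this shape, so the only work is to plug in the specific data (a subset $\mathcal{S}\subseteq\mathfrak{F}_n$, a fixed $\pi'\in\mathfrak{F}_{n'}$, and the parameters $Q,T$) and record the resulting exponents. Your proposal treats Humphries--Thorner's Theorem 1.1 as a large-sieve inequality and then outlines the classical zero-detection machinery (Selberg's mollified product, dyadic decomposition, Cauchy--Schwarz over detected zeros, optimization of the detector length $x$) to convert it into a zero count. That is a plausible account of how Humphries and Thorner proved their theorem internally, but it misidentifies the cited result: you are proposing to re-derive the zero-density bound from an input that already \emph{is} the zero-density bound. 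The symptoms of this are visible in your step (ii), where you write the large-sieve bound with question-mark placeholders for the exponents and then say the $|\mathcal{S}(Q)|^4$ and $6.15$ "come directly from their statement" -- they do, but because their statement is the final inequality, not a mean-value estimate feeding into one.

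Concretely: you do not need the zero-detection argument, the choice of $x$, or the dyadic bookkeeping in steps (i)--(iii); none of this is required once one reads the cited theorem correctly, and introducing it risks degrading the exponent (you even flag this yourself with the worry about obtaining $A(1-\sigma)$ rather than $1-\sigma$). The correct short proof is: apply \cite[Theorem 1.1]{HumphriesThorner2024} with the family $\mathcal{S}(Q)$ and the fixed $\pi'$, note the analytic conductor of each $\pi\times\pi'$ is $\ll (C(\pi)C(\pi'))^{\max\{n,n'\}}\ll (QC(\pi'))^{\max\{n,n'\}}$, and record the exponent $6.15\max\{n^2,n'n\}$ and the factor $|\mathcal{S}(Q)|^4$ that their theorem provides. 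Your step (iv) (the specialization to $n'=1$, $\pi'=\mathbbm{1}$, $C(\mathbbm{1})=1$, $\max\{n^2,n\}=n^2$) is correct and is the only genuinely necessary computation.
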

\begin{proof}
This is a specialization of \cite[Theorem 1.1]{HumphriesThorner2024}.
\end{proof}

Let $\mathscr{F}$ be a set of normalized non-dihedral holomorphic newforms of fixed weight. For each $f\in \mathscr{F},$ we can associate an automrophic cuspidal representation $\pi_f\in\mathfrak{F}_2.$ 
Let $\mathscr{F}(Q)=\{f\in\mathscr{F}\colon C(f)\leq Q\}$. For any $f\in\mathscr{F}$, the $L$-function is 
\[L(s,f)=\sum_{n\geq1}\frac{\lambda_f(n)}{n^s}=\prod_{p<\infty}\left(1-\frac{\alpha_{f,p}}{p^s}\right)^{-1}\left(1-\frac{\beta_{f,p}}{p^s}\right)^{-1}.\]
When $p\nmid q_f$, $\alpha_{f,p}\beta_{f,p}=1$ and $|\alpha_{f,p}|=|\beta_{f,p}|=1.$ For any $j\geq1$, we can define the $n$-th symmetric $L$-function $L(s,\mathrm{Sym}^nf)$ via a Euler product:
\[L(s,\mathrm{Sym}^nf)=\prod_{p<\infty}L_p(s,\mathrm{Sym}^nf)\]
satisfying, when $p\nmid q_f$,
\[L_p(s,\mathrm{Sym}^nf)=\prod_{\ell=0}^{n}\left(1-\frac{\alpha_{f,p}^{n-2\ell}}{p^s}\right)^{-1}.\]

Notice that each modular form  $f\in\mathscr{F}$ corresponds to a cuspidal automorphic representation $\pi_f$ of $\GL_2(\A_{\Q}).$ A recent  result of Newton and Thorne \cite{NeTh2021} implies  that the $n$-th symmetric power lift  $\mathrm{Sym}^nf:=\mathrm{Sym}^n\pi_f$ is a  cuspidal automorphic representation of $\GL_{n+1}(\mathbb{A}_{\mathbb{Q}}).$
\begin{theorem}
If $f\in\mathscr{F}$ and $n\in\{2,4\}$, then $\mathrm{Sym}^nf$ lies in $\mathfrak{F}_{n+1}$ and has trivial central character.  
\end{theorem}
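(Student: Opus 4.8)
The plan is to read the statement off from known instances of symmetric power functoriality together with a one-line computation of the central character; there is essentially nothing to prove beyond invoking input we are permitted to cite. By the modularity of holomorphic newforms, each $f\in\mathscr{F}$ corresponds to a unitary cuspidal automorphic representation $\pi_f$ of $\GL_2(\A_\Q)$, and since the forms in $\mathscr{F}$ have trivial Nebentypus, $\pi_f$ has trivial central character $\omega_{\pi_f}$; in the analytic normalization used throughout, this is precisely the statement that the unramified Satake parameters satisfy $\alpha_{f,p}\beta_{f,p}=1$. Because $f$ is non-dihedral, $\pi_f$ is not automorphically induced from a quadratic field; and since a holomorphic newform of weight $\geq 2$ has discrete series at the archimedean place, $\pi_f$ is not of tetrahedral or octahedral (finite image) type either. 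Thus the cuspidality hypotheses for the symmetric lifts of degree up to $4$ are satisfied.

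With this in hand, I would invoke the functoriality results directly: $\mathrm{Sym}^2\pi_f$ is a cuspidal automorphic representation of $\GL_3(\A_\Q)$ by Gelbart--Jacquet, and $\mathrm{Sym}^4\pi_f$ is a cuspidal automorphic representation of $\GL_5(\A_\Q)$ by Kim and Kim--Shahidi; alternatively, both cases follow uniformly from Newton--Thorne \cite{NeTh2021}. Hence $\mathrm{Sym}^n\pi_f$ is cuspidal on $\GL_{n+1}(\A_\Q)$ for $n\in\{2,4\}$, and it is unitary since $\pi_f$ is. It then remains only to check that $\mathrm{Sym}^n\pi_f$ has trivial central character: at each prime $p\nmid q_f$ the Satake parameters of $\mathrm{Sym}^n\pi_f$ are $\alpha_{f,p}^n,\alpha_{f,p}^{n-1}\beta_{f,p},\dots,\beta_{f,p}^n$, whose product equals $(\alpha_{f,p}\beta_{f,p})^{n(n+1)/2}=1$, so the central character of $\mathrm{Sym}^n\pi_f$ is $\omega_{\pi_f}^{n(n+1)/2}$, which is trivial. (Equivalently, the self-duality $\widetilde{\pi_f}\cong\pi_f$ gives $\widetilde{\mathrm{Sym}^n\pi_f}\cong\mathrm{Sym}^n\pi_f$, matching the shape of the functional equation for $L(s,\mathrm{Sym}^n f)$.) Therefore $\mathrm{Sym}^n f\in\mathfrak{F}_{n+1}$ for $n\in\{2,4\}$, as claimed.

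The main, and indeed only, obstacle is that the substance of the statement is carried entirely by the deep functoriality theorems cited above; what remains is bookkeeping, and the two points needing care are: (i) matching normalization conventions, so that ``trivial Nebentypus'' genuinely yields ``trivial central character'' and the lift lands in the unitary class $\mathfrak{F}_{n+1}$, which is what makes $|\mathfrak{F}_{n+1}(Q)|$ finite and gives the functional equation relating $L(s,\pi)$ to $L(1-s,\widetilde{\pi})$ for $\pi=\mathrm{Sym}^n f$; and (ii) confirming that for holomorphic newforms of weight $\geq 2$ the single hypothesis ``non-dihedral'' rules out every non-cuspidal case of $\mathrm{Sym}^4$, which it does. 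For the intended application, namely weight-two forms attached to non-CM elliptic curves over $\Q$, both points are immediate.
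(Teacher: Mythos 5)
Your proof is correct and takes essentially the same route as the paper: citing Gelbart--Jacquet for $n=2$, Kim for $n=4$, and Newton--Thorne as a uniform alternative. The paper states only those citations without the explicit central-character computation or the discussion of why non-dihedral (together with the archimedean discrete series condition) rules out the non-cuspidal cases; your added bookkeeping is correct and fills those small details in.
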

\begin{proof}
If $n=2$ (resp. $n=4$), this was implied by \cite[Theorem (2.2)]{GJ} (resp. \cite[Theorem B]{Kim}). For general $n,$ this was proved in \cite{NeTh2021}.
\end{proof}

\begin{lemma}
\label{lem:Ramakrishnan}
Let $f,g\in\mathscr{F}$.  If $\mathrm{Sym}^2f=\mathrm{Sym}^2g$ or $\mathrm{Sym}^4f=\mathrm{Sym}^4g$, then there exists a primitive Dirichlet character $\chi$ of order dividing $2$ such that $\pi_g=\pi_f\otimes\chi$.
\end{lemma}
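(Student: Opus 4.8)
The plan is to exploit the fact that isobaric/symmetric-power lifts are governed by the multiplicativity of Rankin--Selberg $L$-functions, so that an equality of symmetric powers forces $\pi_f$ and $\pi_g$ to be twists of one another. First I would treat the case $\mathrm{Sym}^2 f = \mathrm{Sym}^2 g$. Recall the classical factorization $L(s,\pi_f\times\widetilde{\pi_f}) = \zeta(s)\,L(s,\mathrm{Sym}^2 f)$ (valid since $\pi_f$ has trivial central character, so $\widetilde{\pi_f}\cong\pi_f$), and more generally $L(s,\pi_f\times\widetilde{\pi_g}) = L(s,\mathrm{Sym}^2 f \text{-related piece})\cdots$; the cleanest route is to consider $L(s,\pi_f\times\widetilde{\pi_g})$ and observe that $\pi_f\times\widetilde{\pi_g}$ decomposes according to the $\mathrm{GL}_2\times\mathrm{GL}_2$ tensor, whose ``symmetric'' part is $\mathrm{Sym}^2$-type and whose ``antisymmetric'' part is $\det$-type (a Hecke character, here trivial because both central characters are trivial). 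The hypothesis $\mathrm{Sym}^2 f = \mathrm{Sym}^2 g$ then makes $L(s,\pi_f\times\widetilde{\pi_g})$ have a pole at $s=1$ (it picks up the $\zeta(s)$ factor coming from the matching $\mathrm{Sym}^2$ part times the trivial determinant twist), and by Jacquet--Shalika the Rankin--Selberg $L$-function $L(s,\pi_f\times\widetilde{\pi_g})$ has a pole at $s=1$ if and only if $\pi_g\cong\pi_f$; but that would give $\mathrm{Sym}^2 f=\mathrm{Sym}^2 g$ trivially and is the degenerate subcase. The genuinely informative case is handled by Ramakrishnan's theorem on the multiplicity one for $\mathrm{SL}(2)$: I would cite \cite{Ramakrishnan2000} (the ``$\mathrm{Sym}^2$ determines $\pi$ up to twist by a quadratic character'' statement), which says precisely that if $\mathrm{Sym}^2\pi_f\cong\mathrm{Sym}^2\pi_g$ then $\pi_g\cong\pi_f\otimes\chi$ for some Hecke character $\chi$ with $\chi^2=\omega_{\pi_g}\omega_{\pi_f}^{-1}=1$, i.e.\ $\chi$ is quadratic.

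For the case $\mathrm{Sym}^4 f = \mathrm{Sym}^4 g$, I would reduce to the $\mathrm{Sym}^2$ case. The key observation is that $\mathrm{Sym}^4$ determines $\mathrm{Sym}^2$: one has the isobaric decomposition $\mathrm{Sym}^2 f \boxtimes \mathrm{Sym}^2 f = \mathrm{Sym}^4 f \boxplus \mathrm{Sym}^2 f \boxplus \mathbbm{1}$ (the Clebsch--Gordan decomposition for $\mathrm{GL}_2$-symmetric powers, valid at the level of $L$-functions by Gelbart--Jacquet and Kim), and similarly for $g$. Comparing these, and using that the complete set of Rankin--Selberg $L$-functions (equivalently, the Dirichlet series coefficients $\lambda_{\mathrm{Sym}^2 f}(n)$) is determined by $\mathrm{Sym}^4 f$ together with $\mathrm{Sym}^2 f$, one extracts $L(s,\mathrm{Sym}^2 f \times \mathrm{Sym}^2 g)$ and detects its pole: $\mathrm{Sym}^4 f=\mathrm{Sym}^4 g$ forces $\mathrm{Sym}^2 f \boxtimes \mathrm{Sym}^2 g$ to contain the trivial representation as an isobaric summand, hence $L(s,\mathrm{Sym}^2 f\times\mathrm{Sym}^2 g)$ has a pole at $s=1$, hence by Jacquet--Shalika $\mathrm{Sym}^2 g\cong\mathrm{Sym}^2 f$ (as cuspidal representations of $\mathrm{GL}_3$, both being cuspidal by Gelbart--Jacquet since $f,g$ are non-dihedral). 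This returns us to the previous case, and Ramakrishnan's theorem again yields $\pi_g\cong\pi_f\otimes\chi$ with $\chi$ quadratic. Finally, $\chi$ may be replaced by the primitive Dirichlet character inducing it, and its order divides $2$; since $\pi_f,\pi_g$ correspond to newforms of the same fixed weight and trivial nebentypus, $\chi$ is a genuine primitive Dirichlet character.

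The main obstacle, and the point requiring care, is justifying that ``$\mathrm{Sym}^4$ determines $\mathrm{Sym}^2$'': a priori the equality $\mathrm{Sym}^4 f=\mathrm{Sym}^4 g$ is only an equality of $\mathrm{GL}_5$-automorphic representations and does not obviously see the lower symmetric powers. The clean way around this is the Rankin--Selberg pole argument sketched above — one does not need to algebraically invert the Clebsch--Gordan relation, only to recognize that the trivial representation appears in the isobaric decomposition of $\mathrm{Sym}^2 f\boxtimes\mathrm{Sym}^2 g$ precisely when a common constituent of $\mathrm{Sym}^2 f\boxtimes\mathrm{Sym}^2 f$ and $\mathrm{Sym}^2 g\boxtimes\mathrm{Sym}^2 g$ forces it, which is guaranteed by $\mathrm{Sym}^4 f=\mathrm{Sym}^4 g$ together with $\mathrm{Sym}^0 = \mathbbm{1}$ being common to both. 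Alternatively, and perhaps most economically for the writeup, one can cite Ramakrishnan's refinement directly: in \cite{Ramakrishnan2000} (or its sequel with Wang) it is shown that $\mathrm{Sym}^m\pi_f\cong\mathrm{Sym}^m\pi_g$ for a single $m\geq 2$ already implies $\pi_f$ and $\pi_g$ are twist-equivalent, with the twisting character of order dividing $m$; combined with the triviality of the central characters this pins the order down to dividing $2$ in both the $m=2$ and $m=4$ cases. I would use whichever of these is cleanest given the references already available in the bibliography.
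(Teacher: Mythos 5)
Your treatment of the $\mathrm{Sym}^2$ case is essentially the paper's: cite Ramakrishnan's theorem. The issue is the $\mathrm{Sym}^4$ case, where your primary route — the Rankin--Selberg pole argument — has a genuine gap.

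The gap is that detecting a pole of $L(s,\mathrm{Sym}^2 f\times\mathrm{Sym}^2 g)$ at $s=1$ is, by Jacquet--Shalika, \emph{equivalent} to $\mathrm{Sym}^2 g\cong\widetilde{\mathrm{Sym}^2 f}=\mathrm{Sym}^2 f$, which is exactly the statement you are trying to reach. The Clebsch--Gordan identity $\mathrm{Sym}^2 f\boxtimes\mathrm{Sym}^2 f = \mathrm{Sym}^4 f\boxplus\mathrm{Sym}^2 f\boxplus\mathbbm{1}$ decomposes the \emph{self}-product; it does not propagate to the \emph{mixed} product $\mathrm{Sym}^2 f\boxtimes\mathrm{Sym}^2 g$, which only acquires a trivial isobaric summand once you already know $f$ and $g$ are twist-equivalent. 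Concretely, the local Clebsch--Gordan relation $\lambda_{\mathrm{Sym}^2 f}(p)^2 = \lambda_{\mathrm{Sym}^4 f}(p)+\lambda_{\mathrm{Sym}^2 f}(p)+1$ (and its $g$-analogue), together with $\mathrm{Sym}^4 f=\mathrm{Sym}^4 g$, gives $\bigl(\lambda_{\mathrm{Sym}^2 f}(p)-\lambda_{\mathrm{Sym}^2 g}(p)\bigr)\bigl(\lambda_{\mathrm{Sym}^2 f}(p)+\lambda_{\mathrm{Sym}^2 g}(p)-1\bigr)=0$ at each unramified $p$. So locally you get the dichotomy ``agree \emph{or} sum to $1$,'' and ruling out the second alternative on a density-one set of primes requires a further ingredient beyond what you have written. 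The sentence ``a common constituent of $\mathrm{Sym}^2 f\boxtimes\mathrm{Sym}^2 f$ and $\mathrm{Sym}^2 g\boxtimes\mathrm{Sym}^2 g$ forces it'' is a non-sequitur: common constituents of the two self-products say nothing a priori about the cross-product.

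Your alternative route at the end — citing directly a result of the form ``$\mathrm{Sym}^m\pi_f\cong\mathrm{Sym}^m\pi_g$ implies twist-equivalence'' — is the right one and is what the paper actually does: it invokes \cite[Lemma 4.2]{JT_Siegel} to get $\pi_g=\pi_f\otimes\chi$ for a primitive Dirichlet character $\chi$, and then notes that because both $f$ and $g$ have trivial central character (equivalently are self-dual), $\omega_{\pi_g}=\omega_{\pi_f}\chi^2$ forces $\chi^2=\mathbbm{1}$. That central-character step you mention at the end is correct and suffices; you should lead with it rather than with the pole argument. Just make sure to point at the actual reference used (the paper cites \cite{JT_Siegel}, not a Ramakrishnan paper, for the $\mathrm{Sym}^4$ direction).
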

\begin{proof}
The result for the symmetric square lift follows from \cite[Theorem 4.1.2]{Ramakrishnan}.  To handle the symmetric fourth power lift, we observe via \cite[Lemma 4.2]{JT_Siegel} that if $\mathrm{Sym}^4f=\mathrm{Sym}^4g$, then there exists a primitive Dirichlet character $\chi$ such that $\pi_g=\pi_f\otimes\chi$.  Since $g\in\mathscr{F}$, we have that $g$ is self-dual with trivial central character.  It follows that $\chi$ must have order dividing $2$.
\end{proof}

For $j\in\{2,4\}$, we define an equivalence relation $\sim_j$ on $\mathscr{F}(Q)$ by
\[
\textup{$f\sim_jg$ if and only if $\mathrm{Sym}^{j}f=\mathrm{Sym}^{j}g$.}
\]
Let $\widetilde{\mathscr{F}}_j=\mathscr{F}(Q)/\sim_j$ be the set of equivalence classes.  Since each equivalence class is finite, we can choose a representative having the largest analytic conductor; let $\mathscr{F}_j$ be a set of such representatives.  We have the bounds $|\mathscr{F}_j|\leq |\mathscr{F}(Q)|$ and $C(\mathrm{Sym}^2f)\ll C(f)^2\asymp  q_{f}^2$.\footnote{In our case, the analytic conductor is $\mathfrak{q}(f)$ in \cite[Page 95]{IwaniecKowalski2004}. When the weight of the modular form $f\in\mathscr{F}$ is fixed,  $c(f)\asymp q_f,$ the level of $f.$}  In addition, we have that $C(\mathrm{Sym}^4f)\ll C(f)^5\asymp q_{f}^{5}$ \cite[Theorem A.1]{HIJT}.  Therefore, it follows from \Cref{thm:ZDE} that
\begin{equation}
\label{eqn:zde_pre}
\begin{aligned}
\sum_{f\in\mathscr{F}_2}N_{\mathrm{Sym}^2f}(\sigma,Q)&\ll_{\varepsilon} \big(|\mathscr{F}(Q)|^{4} (Q^{2} \cdot Q)^{6.15\cdot (3)^2}\big)^{1-\sigma+\varepsilon},\\
\sum_{f\in\mathscr{F}_4}N_{\mathrm{Sym}^4f}(\sigma,Q)&\ll_{\varepsilon} \big(|\mathscr{F}(Q)|^{4} (Q^{5} \cdot Q)^{6.15\cdot (5)^2}\big)^{1-\sigma+\varepsilon},\\
\sum_{f\in\mathscr{F}_2}N_{\mathrm{Sym}^2f\times\mathrm{Sym}^2f_0}(\sigma,Q)&\ll_{\varepsilon} \big(|\mathscr{F}(Q)|^{4} (Q^2\cdot Q^{2}\cdot Q)^{6.15\cdot (3)^2}\big)^{1-\sigma+\varepsilon},\qquad \forall f_0\in\mathscr{F}(Q).
\end{aligned}
\end{equation}

We conclude the following result.
\begin{corollary}
\label{cor:final_ZDE}
Fix $\delta>0$ and let $\varepsilon>0$.  If $\sigma\geq 0$, $Q\geq 1$, $|\mathscr{F}(Q)|\gg Q^{\delta}$, and $\chi$ is a primitive Dirichlet character of absolutely bounded conductor, then
\begin{align*}
\sum_{f\in\mathscr{F}(Q)}N_{\mathrm{Sym}^2f\otimes\chi}(\sigma,Q)&\ll_{\varepsilon} |\mathscr{F}(Q)|^{\frac{1}{2\delta}+(4+\frac{166.05}{\delta})(1-\sigma)+\frac{\varepsilon}{5}},\\
\sum_{f\in\mathscr{F}(Q)}N_{\mathrm{Sym}^4f}(\sigma,Q)&\ll_{\varepsilon} |\mathscr{F}(Q)|^{\frac{1}{2\delta}+(4+\frac{922.5}{\delta})(1-\sigma)+\frac{\varepsilon}{5}},\\
\sum_{f\in\mathscr{F}(Q)}N_{\mathrm{Sym}^2f\times\mathrm{Sym}^2f_0}(\sigma,Q)&\ll_{\varepsilon} |\mathscr{F}(Q)|^{\frac{1}{2\delta}+(4+\frac{276.75}{\delta})(1-\sigma)+\frac{\varepsilon}{5}},\qquad \forall f_0\in\mathscr{F}(Q).
\end{align*}
If the conductor of each $\pi\in \mathscr{F}(Q)$ is squarefree, then the $1/(2\delta)$ in the exponent can be removed.
\end{corollary}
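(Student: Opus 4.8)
The plan is to derive all three estimates by bootstrapping the three bounds of \eqref{eqn:zde_pre} — themselves instances of \Cref{thm:ZDE}, but with the outer sum running only over the representative sets $\mathscr{F}_2,\mathscr{F}_4$ — to sums over the whole family $\mathscr{F}(Q)$, and then converting the explicit powers of $Q$ that appear into powers of $|\mathscr{F}(Q)|$ via the standing hypothesis $|\mathscr{F}(Q)|\gg Q^{\delta}$. The first step is a formal reduction to representatives: fix $j\in\{2,4\}$ and let $[f]$ be the $\sim_j$-class of $f$ in $\mathscr{F}(Q)$. If $f\sim_j g$ then $\mathrm{Sym}^j f=\mathrm{Sym}^j g$ as automorphic representations, so $N_{\mathrm{Sym}^j f}(\sigma,Q)=N_{\mathrm{Sym}^j g}(\sigma,Q)$; likewise $N_{\mathrm{Sym}^2 f\otimes\chi}(\sigma,Q)$ and $N_{\mathrm{Sym}^2 f\times\mathrm{Sym}^2 f_0}(\sigma,Q)$ depend on $f$ only through $\mathrm{Sym}^2 f$, hence only through its $\sim_2$-class. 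Thus, letting $N_{\bullet}(\sigma,Q)$ denote any of $N_{\mathrm{Sym}^2 f\otimes\chi}$, $N_{\mathrm{Sym}^4 f}$, $N_{\mathrm{Sym}^2 f\times\mathrm{Sym}^2 f_0}$ with $f$ the summation variable (and $j=4$ in the second case, $j=2$ in the other two),
\[
\sum_{f\in\mathscr{F}(Q)}N_{\bullet}(\sigma,Q)=\sum_{g\in\mathscr{F}_j}|[g]|\,N_{\bullet}(\sigma,Q)\le\Bigl(\max_{g\in\mathscr{F}_j}|[g]|\Bigr)\sum_{g\in\mathscr{F}_j}N_{\bullet}(\sigma,Q).
\]

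The crux is to bound $\max_g|[g]|$. By \Cref{lem:Ramakrishnan}, every $g'$ with $g'\sim_j g$ satisfies $\pi_{g'}=\pi_g\otimes\psi$ for a primitive Dirichlet character $\psi$ of order dividing $2$ (or $\psi$ trivial), and since $g$ is non-dihedral it has no non-trivial self-twist, so distinct $g'$ give distinct $\psi$; hence $|[g]|\le 1+\#\{\psi\text{ primitive quadratic}\colon q_{\pi_g\otimes\psi}\ll Q\}$, the implied constant reflecting that the weight is fixed. I would split $\psi=\psi_1\psi_2$ according to its ramified primes: $\psi_1$ supported on the primes $p$ with $v_p(\mathrm{cond}(\psi))\le v_p(q_g)$, and $\psi_2$ on those with $v_p(\mathrm{cond}(\psi))>v_p(q_g)$. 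Since $\psi$ is quadratic, $\mathrm{cond}(\psi_1)\mid 8\,\rad(q_g)$, which leaves $\ll_{\varepsilon}Q^{\varepsilon}$ choices for $\psi_1$; and at every prime $p\mid\mathrm{cond}(\psi_2)$ the conductor exponent of $\pi_{g,p}\otimes\psi_p$ equals $2\,v_p(\mathrm{cond}(\psi))$ (immediate when $p>2$, since then $\pi_{g,p}$ is forced to be unramified, and a brief local computation handles $p=2$), so $\mathrm{cond}(\psi_2)^2\mid q_{\pi_g\otimes\psi}\ll Q$ and there are $\ll Q^{1/2}$ choices for $\psi_2$. Therefore $\max_g|[g]|\ll_{\varepsilon}Q^{1/2+\varepsilon}$. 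If every conductor in $\mathscr{F}(Q)$ is squarefree, then any ramified prime $p$ of a non-trivial quadratic $\psi$ satisfies $v_p(q_g)\le 1$, so $\pi_{g,p}$ is unramified or an unramified twist of the Steinberg representation, and in either case $\pi_{g,p}\otimes\psi_p$ has conductor exponent $\ge 2$; thus $q_{\pi_g\otimes\psi}$ is not squarefree, $\pi_g\otimes\psi\notin\mathscr{F}(Q)$, and every $\sim_j$-class is a singleton, so the factor $Q^{1/2}$ is absent.

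Finally I would assemble, which is bookkeeping. For the first estimate one additionally needs the $\mathrm{Sym}^2 f\otimes\chi$-analogue of the first line of \eqref{eqn:zde_pre}: since $\chi$ has absolutely bounded conductor, $\mathrm{Sym}^2 f\otimes\chi$ is cuspidal on $\GL_3$ with analytic conductor $\ll C(\mathrm{Sym}^2 f)\,\mathrm{cond}(\chi)^{3}\ll C(f)^{2}\asymp q_f^{2}$ and a central character of bounded conductor, so \Cref{thm:ZDE} applied to $\{\mathrm{Sym}^2 f\otimes\chi\colon f\in\mathscr{F}_2\}$ (with $\pi'=\mathbbm{1}$) gives $\sum_{f\in\mathscr{F}_2}N_{\mathrm{Sym}^2 f\otimes\chi}(\sigma,Q)\ll_{\varepsilon}(|\mathscr{F}(Q)|^{4}Q^{166.05})^{1-\sigma+\varepsilon}$, its proof (large sieve plus the functional equation) seeing only the analytic conductor; alternatively the zeros of $L(s,\mathrm{Sym}^2 f\otimes\chi)$ are controlled through those of $L(s,\mathrm{Sym}^2 f)L(s,\mathrm{Sym}^2 f\otimes\chi)$. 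Combining the three bounds of \eqref{eqn:zde_pre} with the reduction above, with $|\mathscr{F}_j|\le|\mathscr{F}(Q)|$, and with $Q\ll|\mathscr{F}(Q)|^{1/\delta}$, the powers $Q^{166.05},Q^{922.5},Q^{276.75}$ turn into $|\mathscr{F}(Q)|^{C/\delta}$ for the respective $C$, and the factor $Q^{1/2}$ turns into $|\mathscr{F}(Q)|^{1/(2\delta)}$; the resulting exponents have the shape $\frac{1}{2\delta}+\bigl(4+\frac{C}{\delta}\bigr)(1-\sigma)+O_{\delta}(\varepsilon)$, and running the argument with $\varepsilon$ replaced by a sufficiently small $\varepsilon'=\varepsilon'(\delta,\varepsilon)$ absorbs all stray $O_{\delta}(\varepsilon')$ terms (and the $Q^{\varepsilon'}$ from the class-size bound) into the stated $\frac{\varepsilon}{5}$, while the squarefree case loses the $Q^{1/2}$ and hence the $\frac{1}{2\delta}$. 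The one genuinely non-formal step is the conductor analysis of quadratic twists bounding $|[g]|$ by $O(Q^{1/2+\varepsilon})$ (and collapsing the classes to singletons when the conductors are squarefree); the rest is routine.
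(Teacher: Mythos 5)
Your argument follows the paper's proof exactly: reduce the sum over $\mathscr{F}(Q)$ to a sum over the representative sets $\mathscr{F}_j$ by multiplying by the maximal $\sim_j$-class size $O_\varepsilon(Q^{1/2+\varepsilon})$, apply \eqref{eqn:zde_pre}, convert $Q$ to $|\mathscr{F}(Q)|^{1/\delta}$, and collapse classes to singletons in the squarefree case. The only difference is cosmetic: where you prove the class-size bound and the squarefree collapse directly via local conductor analysis of quadratic twists, the paper cites \cite[p.~11]{DuKo2000} and \cite[Theorem~A]{RamakrishnanYang}, and you also spell out the $\mathrm{Sym}^2 f\otimes\chi$ variant of the first line of \eqref{eqn:zde_pre}, which the paper treats as implicit since $\chi$ has bounded conductor.
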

\begin{proof}
Proceeding as in \cite[p.~11]{DuKo2000}, we find that if $j\in\{2,4\}$, then each equivalence class in $\widetilde{\mathscr{F}}_j$ has cardinality $O_{\varepsilon}(Q^{1/2+\varepsilon})$.  Therefore, since $Q\ll |\mathscr{F}(Q)|^{1/\delta}$, the desired result follows from  \eqref{eqn:zde_pre}.  The last claim follows from the fact that if $f,g\in\mathscr{F}(Q)$, there exists a primitive Dirichlet character $\chi$ such that $\pi_g=\pi_f\otimes\chi$ and $q_{f}$ and $q_{g}$ are both squarefree, then $\chi$ is trivial \cite[Theorem A]{RamakrishnanYang}.
\end{proof}

\subsection{Identification and the explicit formula}\label{subsec: explicit formula}

We keep the notation from the earlier sections.

For $\delta>\frac{1}{2}$, we define
\begin{equation}\label{eq. constant c}
    c(\delta)=\frac{2(4+\frac{923}{\delta})}{1-\frac{1}{2\delta}}.
\end{equation}
Let $Q\geq 1000$ and recall $\mathscr{F}(Q)$ is a set of   normalized holomorphic newforms  with conductor at most $Q$ and for any $f\in\mathscr{{F}}(Q),$ $q_f$ is its level. Fix $f_0\in \mathscr{F}(Q)$ and we define
\[
M(f_0;Q,\varepsilon)=\{f\in\mathscr{F}(Q)\colon \textup{if $p\leq (\log Q)^{c(\delta)+\varepsilon}$ and $p\nmid q_{f}q_{f_0}$, then $|\lambda_{f}(p)|=|\lambda_{f_0}(p)|$}\},
\]

We want to show that for certain  choices \footnote{Here ``certain" means $L(s,\mathrm{Sym}^2f_0)$ and $L(s,\mathrm{Sym}^4f_0)$ have a relatively large zero free region  (see \Cref{thm:multiplicity_one_1}).} of $f_0$ and any $0<\varepsilon<1$, $|M(f_0;Q,\varepsilon)|$ is  small compared to $|\mathscr{F}(Q)|$. If $f\in M(f_0; Q, \varepsilon)$ with $f\neq f_0$, then
\[
|\lambda_{f}(p)|^2 = |\lambda_{f_0}(p)|^2,\qquad p\leq (\log Q)^{c(\delta)+\varepsilon},\qquad p\nmid q_{f}q_{f_0}.
\]
Since $|\lambda_{f}(p)|^2=1+\lambda_{\mathrm{Sym}^2f}(p)$ (and similarly for $f_0$), it follows that
\[
 \lambda_{\mathrm{Sym}^2f}(p) =\lambda_{\mathrm{Sym}^2f_0}(p),\qquad p\leq (\log Q)^{c(\delta)+\varepsilon},\qquad p\nmid q_{f}q_{f_0}.
\]
In particular,
\[
 \lambda_{\mathrm{Sym}^2f_0}(p)\lambda_{\mathrm{Sym}^2f_0}(p)=\lambda_{\mathrm{Sym}^2f}(p)\lambda_{\mathrm{Sym}^2f_0}(p),\qquad p\leq (\log Q)^{c(\delta)+\varepsilon},\qquad p\nmid q_{f}q_{f_0}.
\]
Since $p\nmid q_{f}q_{f_0}$,  we conclude that 
\[
1+\lambda_{\mathrm{Sym}^2f_0}(p)+\lambda_{\mathrm{Sym}^4f_0}(p)= \lambda_{\mathrm{Sym}^2f\times \mathrm{Sym}^2f_0}(p),\qquad p\leq (\log Q)^{c(\delta)+\varepsilon},\quad p\nmid q_{f}q_{f_0}.
\]

It follows that if $\phi\colon\mathbb{R}\to\mathbb{R}$ is an infinitely differentiable function supported in $[1/2,1]$ with $\widehat{\phi}(1)=1$ and 
\begin{equation}\label{eq: x-logQ}
    x=(\log Q)^{c(\delta)+\varepsilon},
\end{equation}
then
\[
\sum_{p\nmid q_{f}q_{f_0}}(1+\lambda_{\mathrm{Sym}^2f_0}(p)+\lambda_{\mathrm{Sym}^4f_0}(p))\phi(p/x)=\sum_{p\nmid q_{f}q_{f_0}}\lambda_{\mathrm{Sym}^2f\times\mathrm{Sym}^2f_0}(p)\phi(p/x).
\]
Trivially incorporate ramified primes and all prime powers using progress towards Ramanujan:
\[
\sum_{n=1}^{\infty}(\Lambda(n)+\Lambda_{\mathrm{Sym}^2f_0}(n)+\Lambda_{\mathrm{Sym}^4f_0}(n))\phi(n/x)=\sum_{n=1}^{\infty}\Lambda_{\mathrm{Sym}^2f\times\mathrm{Sym}^2f_0}(n)\phi(n/x)+O(\sqrt{x}).
\]
Then the Mellin inversion formula implies:
\begin{align*}
&\frac{1}{2\pi i}\int_{2-i\infty}^{2+i\infty}\Big(-\frac{\zeta'}{\zeta}(s)-\frac{L'}{L}(s,\mathrm{Sym}^2f_0)-\frac{L'}{L}(s,\mathrm{Sym}^4f_0)\Big)x^s\widehat{\phi}(s)ds\\
&=\frac{1}{2\pi i}\int_{2-i\infty}^{2+i\infty}-\frac{L'}{L}(s,\mathrm{Sym}^2f\times\mathrm{Sym}^2f_0)x^s\widehat{\phi}(s)ds+O(\sqrt{x}).
\end{align*}
Push the contour and we obtain:
\begin{align*}
&x - \sum_{\substack{\rho=\beta+i\gamma \\ \zeta(\rho)=0}}x^{\rho}\widehat{\phi}(\rho) - \sum_{\substack{\rho=\beta+i\gamma \\ L(\rho,\mathrm{Sym}^2f_0)=0}}x^{\rho}\widehat{\phi}(\rho)- \sum_{\substack{\rho=\beta+i\gamma \\ L(\rho,\mathrm{Sym}^4f_0)=0}}x^{\rho}\widehat{\phi}(\rho)\\
&=-\sum_{\substack{\rho=\beta+i\gamma \\ L(\rho,\mathrm{Sym}^2f\times\mathrm{Sym}^2f_0)=0}}x^{\rho}\widehat{\phi}(\rho)+O(\sqrt{x}).
\end{align*}
 The contribution from the trivial zeros is negligible, and upon reordering the summations, we arrive at:
\begin{align*}
&x - \sum_{\substack{\rho=\beta+i\gamma \\ \zeta(\rho)=0}}x^{\rho}\widehat{\phi}(\rho)\\
&=\sum_{\substack{\rho=\beta+i\gamma \\ L(\rho,\mathrm{Sym}^2f_0)=0}}x^{\rho}\widehat{\phi}(\rho) + \sum_{\substack{\rho=\beta+i\gamma \\ L(\rho,\mathrm{Sym}^4f_0)=0}}x^{\rho}\widehat{\phi}(\rho) -\sum_{\substack{\rho=\beta+i\gamma \\ L(\rho,\mathrm{Sym}^2f\times\mathrm{Sym}^2f_0)=0}}x^{\rho}\widehat{\phi}(\rho)+O(\sqrt{x}).
\end{align*}
Then apply the Prime number theorem:
\begin{equation}
\label{eqn:PNT_classical}
x\asymp \Big|\sum_{\substack{\rho=\beta+i\gamma \\ L(\rho,\mathrm{Sym}^2f_0)=0}}x^{\rho}\widehat{\phi}(\rho) + \sum_{\substack{\rho=\beta+i\gamma \\ L(\rho,\mathrm{Sym}^4f_0)=0}}x^{\rho}\widehat{\phi}(\rho) -\sum_{\substack{\rho=\beta+i\gamma \\ L(\rho,\mathrm{Sym}^2f\times\mathrm{Sym}^2f_0)=0}}x^{\rho}\widehat{\phi}(\rho)\Big|.
\end{equation}
Let $T\geq 2$.  By triangle inequality, we trivially estimate zeros with $|\gamma|>T$:
\begin{align*}
&\sum_{\substack{\rho=\beta+i\gamma \\ L(\rho,\mathrm{Sym}^2f_0)=0}}x^{\rho}\widehat{\phi}(\rho) + \sum_{\substack{\rho=\beta+i\gamma \\ L(\rho,\mathrm{Sym}^4f_0)=0}}x^{\rho}\widehat{\phi}(\rho) -\sum_{\substack{\rho=\beta+i\gamma \\ L(\rho,\mathrm{Sym}^2f\times\mathrm{Sym}^2f_0)=0}}x^{\rho}\widehat{\phi}(\rho)\\
&\ll \sum_{\substack{\rho=\beta+i\gamma \\ L(\rho,\mathrm{Sym}^2f_0)=0 \\ |\gamma|\leq T}}\frac{x^{\beta}}{|\gamma|+1} + \sum_{\substack{\rho=\beta+i\gamma \\ L(\rho,\mathrm{Sym}^4f_0)=0\\ |\gamma|\leq T}}\frac{x^{\beta}}{|\gamma|+1}+\sum_{\substack{\rho=\beta+i\gamma \\ L(\rho,\mathrm{Sym}^2f\times\mathrm{Sym}^2f_0)=0\\ |\gamma|\leq T}}\frac{x^{\beta}}{|\gamma|+1}+\frac{x\log(Q T)}{T^2}.
\end{align*}

Fix  $\varepsilon$ such that $0<\varepsilon<\frac{1}{2\delta}$.   Let
\[
T = Q,\qquad \sigma = 1-\frac{1-\frac{1}{2\delta}-\varepsilon}{4+\frac{922.5}{\delta}},\qquad \mathcal{R}(\sigma,T)=\{s\in\mathbb{C}\colon \mathop{\mathrm{Re}}(s)\geq \sigma,~|\mathop{\mathrm{Im}}(s)|\leq T\}.
\]

\begin{proposition}\label{thm:multiplicity_one_1}
Fix $\delta>0$. Let $Q\geq 5$ and assume that $|\mathscr{F}(Q)|\gg Q^{\delta}$.  Let $f_0\in\mathscr{F}(Q)$ satisfying that $L(s,\mathrm{Sym}^2f_0)L(s,\mathrm{Sym}^4f_0)$ has no zero in $\mathcal{R}(\sigma,T)$. Then,  there is a small $\varepsilon>0$ (depending on $\delta$) such that
\[
|M(f_0;Q,\varepsilon)|\ll_{\varepsilon}\begin{cases}
|\mathscr{F}(Q)|^{1-\frac{1}{2\delta}-\varepsilon}&\mbox{if each $f \in\mathscr{F}(Q)$ has a squarefree conductor,}\\
|\mathscr{F}(Q)|^{1-\varepsilon}&\mbox{otherwise.}
\end{cases}
\]  
\end{proposition}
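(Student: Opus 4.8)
The plan is to combine the explicit-formula bound established just before the statement with the zero-density estimates of \Cref{cor:final_ZDE}, after splitting $M(f_0;Q,\varepsilon)$ according to whether $\mathrm{Sym}^2 f=\mathrm{Sym}^2 f_0$. Throughout write $x=(\log Q)^{c(\delta)+\varepsilon}$ and $T=Q$ as fixed above, and let $f\in M(f_0;Q,\varepsilon)$ with $f\neq f_0$. If $\mathrm{Sym}^2 f=\mathrm{Sym}^2 f_0$, then $f$ lies in the $\sim_2$-equivalence class of $f_0$; by the argument following \Cref{lem:Ramakrishnan} (see \cite[p.~11]{DuKo2000}), $f=f_0\otimes\chi$ for a quadratic character $\chi$ of conductor $\ll Q^{1/2}$, so there are $O_\varepsilon(Q^{1/2+\varepsilon})=O_\varepsilon(|\mathscr{F}(Q)|^{1/(2\delta)+\varepsilon})$ such $f$; when every conductor in $\mathscr{F}(Q)$ is squarefree, \cite[Theorem A]{RamakrishnanYang} forces $\chi$ trivial, so this case is vacuous. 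For $\delta>\tfrac12$ and $\varepsilon$ small in terms of $\delta$, this part contributes $\ll |\mathscr{F}(Q)|^{1-\varepsilon}$, which already gives the squarefree/non-squarefree dichotomy in the statement.

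\textbf{The generic case via the explicit formula.}
Now suppose $\mathrm{Sym}^2 f\neq\mathrm{Sym}^2 f_0$. Since $\mathrm{Sym}^2 f_0$ is self-dual and cuspidal on $\GL_3$, the Rankin--Selberg $L(s,\mathrm{Sym}^2 f\times\mathrm{Sym}^2 f_0)$ is entire and nonvanishing on $\Re(s)=1$, so the chain leading to \eqref{eqn:PNT_classical} and the subsequent truncated display apply. The hypothesis that $L(s,\mathrm{Sym}^2 f_0)L(s,\mathrm{Sym}^4 f_0)$ has no zero in $\mathcal{R}(\sigma,T)$ means every zero of these two $L$-functions with $|\gamma|\le T$ has $\beta<\sigma$; since such zeros weighted by $(|\gamma|+1)^{-1}$ contribute $\ll x^{\sigma}(\log Q)^2$, and since $(\log Q)^2=x^{2/(c(\delta)+\varepsilon)}$ while $1-\sigma=\tfrac{1-1/(2\delta)-\varepsilon}{4+922.5/\delta}>\tfrac{2}{c(\delta)+\varepsilon}$ for $\varepsilon$ small (here $922.5<923$ in \eqref{eq. constant c} is what provides the room), this is $o(x)$; the tail $|\gamma|>T$ and the term $x\log(QT)/T^2$ with $T=Q$ are likewise $o(x)$. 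Hence
\[
x\ll \sum_{\substack{\rho=\beta+i\gamma\\ L(\rho,\mathrm{Sym}^2 f\times\mathrm{Sym}^2 f_0)=0\\ |\gamma|\le T}}\frac{x^{\beta}}{|\gamma|+1}.
\]

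\textbf{Dyadic decomposition and zero density.}
Fix $\sigma_0$ with $1-\sigma_0$ a small fixed multiple of $1/c(\delta)$, chosen so that still $x^{\sigma_0}(\log Q)^2=o(x)$; zeros with $\beta<\sigma_0$ are then negligible, so I may restrict to $\sigma_0\le\beta<1$. Covering $[\sigma_0,1)$ by $O(\log\log Q)$ intervals $[\sigma_j,\sigma_{j+1})$ of length $\le(\log 2)/\log x$, on each of which $x^{\beta}\asymp x^{\sigma_j}$, and using $\sum_{\beta\ge\sigma_j,\,|\gamma|\le T}(|\gamma|+1)^{-1}\le N_{\mathrm{Sym}^2 f\times\mathrm{Sym}^2 f_0}(\sigma_j,T)$, a pigeonhole step produces an index $j=j(f)$ with
\[
N_{\mathrm{Sym}^2 f\times\mathrm{Sym}^2 f_0}(\sigma_{j(f)},T)\gg \frac{x^{1-\sigma_{j(f)}}}{\log\log Q},\qquad 0<1-\sigma_{j(f)}\le 1-\sigma_0\ll \tfrac{1}{c(\delta)}.
\]
Grouping the $f$ of this type by $j(f)$, summing over all $f\in\mathscr{F}(Q)$, and invoking the third estimate of \Cref{cor:final_ZDE} (with $f_0$ the distinguished form, $T=Q$), the group with index $j$ has size $\ll (\log\log Q)\,x^{-(1-\sigma_j)}|\mathscr{F}(Q)|^{\frac{1}{2\delta}+(4+276.75/\delta)(1-\sigma_j)+\varepsilon/5}$, with the $\tfrac{1}{2\delta}$ removed when all conductors are squarefree. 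Since $|\mathscr{F}(Q)|\gg Q^{\delta}$, the factor $(\log\log Q)\,x^{-(1-\sigma_j)}\ll |\mathscr{F}(Q)|^{\varepsilon/5}$, and as $1-\sigma_j\le 1-\sigma_0$ with $1-\sigma_0$ close to $\tfrac{2}{c(\delta)}=\tfrac{1-1/(2\delta)}{4+923/\delta}$, summing the $O(\log\log Q)$ groups gives total exponent at most
\[
\frac{1}{2\delta}+\Big(4+\frac{276.75}{\delta}\Big)\cdot\frac{1-\frac{1}{2\delta}}{4+\frac{923}{\delta}}+\varepsilon'=\frac{1}{2\delta}+\Big(1-\frac{1}{2\delta}\Big)\frac{4+276.75/\delta}{4+923/\delta}+\varepsilon'.
\]
Because $\tfrac{4+276.75/\delta}{4+923/\delta}<1$, this is $<1$ for all $\delta>\tfrac12$ (and $<1-\tfrac{1}{2\delta}$ in the squarefree case, where the $\tfrac{1}{2\delta}$ is dropped), with a gap depending only on $\delta$; taking $\varepsilon$ small relative to this gap and combining with the first case yields $|M(f_0;Q,\varepsilon)|\ll_\varepsilon|\mathscr{F}(Q)|^{1-\varepsilon}$ in general, and $\ll_\varepsilon|\mathscr{F}(Q)|^{1-1/(2\delta)-\varepsilon}$ when all conductors are squarefree.

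\textbf{Main obstacle.}
The delicate point is the calibration visible in the last two paragraphs: the zero-free-region hypothesis only annihilates the $\mathrm{Sym}^2 f_0$- and $\mathrm{Sym}^4 f_0$-zeros when $1-\sigma>2/c(\delta)$, while the Rankin--Selberg zero-density input is only decisive when $1-\sigma_0$ (hence each relevant $1-\sigma_j$) is small enough that $(4+276.75/\delta)(1-\sigma_0)$ stays below the gap $1-\tfrac{1}{2\delta}$ (resp.\ $1$). The constant $c(\delta)$ in \eqref{eq. constant c} is tuned precisely so that both can hold with room for the various $\varepsilon$'s; verifying these numeric inequalities, and checking that the $O(\log\log Q)$ loss from the dyadic sum and the $O(Q^{1/2+\varepsilon})$ loss in the first case are absorbed into $|\mathscr{F}(Q)|^{\varepsilon}$ using $|\mathscr{F}(Q)|\gg Q^{\delta}$, is the technical heart of the proof.
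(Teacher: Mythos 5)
Your proof is correct and follows the same high-level plan as the paper (explicit formula plus zero-density estimates), but it differs in two meaningful ways. First, you explicitly split off the subset of $f$ with $\mathrm{Sym}^2 f=\mathrm{Sym}^2 f_0$ before invoking the explicit-formula contradiction. This is in fact a necessary refinement: when $\mathrm{Sym}^2f=\mathrm{Sym}^2f_0$, the Rankin--Selberg $L(s,\mathrm{Sym}^2f\times\mathrm{Sym}^2f_0)$ has a pole at $s=1$, whose residue contributes an extra main term $x$ that cancels the left-hand main term, so the identity \eqref{eqn:PNT_classical} and the resulting contradiction $x=o(x)$ simply do not apply to those $f$. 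The paper passes over this and asserts that \emph{every} $f\in M(f_0;Q,\varepsilon)\setminus\{f_0\}$ must produce a Rankin--Selberg zero in $\mathcal{R}(\sigma,T)$; your handling of the $O_\varepsilon(Q^{1/2+\varepsilon})$-sized $\sim_2$-class of $f_0$ (vacuous in the squarefree case by Ramakrishnan--Yang) closes that gap cleanly. Second, after establishing $x\ll\sum_\rho x^\beta/(|\gamma|+1)$, you run a dyadic decomposition in $\beta$ with a pigeonhole over $O(\log\log Q)$ levels and then sum the zero-density bounds. This works, but it is more machinery than needed: because the zero-free-region hypothesis for $L(s,\mathrm{Sym}^2f_0)L(s,\mathrm{Sym}^4f_0)$ is stated at a single fixed $\sigma$, the paper simply concludes that $L(s,\mathrm{Sym}^2 f\times\mathrm{Sym}^2 f_0)$ must have at least one zero in $\mathcal{R}(\sigma,T)$ (so $N_{\mathrm{Sym}^2f\times\mathrm{Sym}^2f_0}(\sigma,T)\geq1$) and then bounds $\#\{f\}$ directly by $\sum_f N_{\mathrm{Sym}^2f\times\mathrm{Sym}^2f_0}(\sigma,T)$ using the third estimate of \Cref{cor:final_ZDE} at that same $\sigma$. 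Your dyadic layering yields the same exponent in the end (the worst level $\sigma_0$ is essentially the paper's $\sigma$), so it buys nothing here; it is the kind of technique one would reach for in the absence of the explicit zero-free region, and you could shorten the generic case considerably by adopting the paper's one-step argument.
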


\begin{proof}
By our assumption, $L(s,\mathrm{Sym}^2f_0)L(s,\mathrm{Sym}^4f_0)$ has no zero in $\mathcal{R}(\sigma,T)$. Since $L(s,\mathrm{Sym}^2f_0)$ (resp. $L(s,\mathrm{Sym}^4f_0)$) have $O(\log(Q(|t|+3)))$ nontrivial zeros $\beta+i\gamma$ such that $|\gamma-t|\leq 1$ (e.g., \cite[Proposition 5.7(1)]{IwaniecKowalski2004}), it follows that
\begin{align*}
&\sum_{\substack{\rho=\beta+i\gamma \\ L(\rho,\mathrm{Sym}^2f_0)=0 \\ |\gamma|\leq T}}\frac{x^{\beta}}{|\gamma|+1} + \sum_{\substack{\rho=\beta+i\gamma \\ L(\rho,\mathrm{Sym}^4f_0)=0\\ |\gamma|\leq T}}\frac{x^{\beta}}{|\gamma|+1}+\frac{x\log(Q T)}{T^2}\\
&\ll x^{\sigma}\int_1^T \frac{\log(Q(t+3))}{t+1}dt+\frac{x\log(QT)}{T^2}\ll x^{\sigma+\frac{2}{c(\delta)+\varepsilon}}=o(x).
\end{align*}

Our hypothesis that $Q$ is large ensures that the implied constant is independent of $\varepsilon$. 
  
  Similarly,  if $f\in M(f_0;Q,\varepsilon)-\{f_0\}$ and $L(s,\mathrm{Sym}^2f\times\mathrm{Sym}^2f_0)$ has no zero in $\mathcal{R}(\sigma,T)$, then
\[
\sum_{\substack{\rho=\beta+i\gamma \\ L(\rho,\mathrm{Sym}^2f\times\mathrm{Sym}^2f_0)=0\\ |\gamma|\leq T}}\frac{x^{\beta}}{|\gamma|+1}\ll x^{\sigma}\int_1^T \frac{\log(Q(t+3))}{t+1}dt+\frac{x\log(QT)}{T^2}\ll x^{\sigma+\frac{2}{c(\delta)+\varepsilon}}=o(x).
\]
Therefore, by \eqref{eqn:PNT_classical}, we have $x=o(x)$ as $Q$ (hence $x$) grows, a contradiction.  Therefore, every $f\in M(f_0;Q,\varepsilon)-\{f_0\}$ has the property that $L(s,\mathrm{Sym}^2f\times\mathrm{Sym}^2f_0)$ {\it must} have a zero in $\mathcal{R}(\sigma,T)$.  The number of such $f$ is bounded by the last part of  \Cref{cor:final_ZDE}. 
\end{proof}

To prove Theorem \ref{thm-uncond-family inertia}, we will need the following \Cref{prop.comparision-thm-character}, a variant of Proposition \ref{thm:multiplicity_one_1}. Since the proof follow closely along the same lines, we only outline the key steps here. 
\begin{proposition}\label{prop.comparision-thm-character}
Fix $\delta>0$.    Let $Q\geq 5$ and assume that $|\mathscr{F}(Q)|\gg Q^{\delta}$.   For  a primitive quadratic character  $\chi\pmod{q}$  of conductor $q\ll 1$ and any $\epsilon>0$, denote by 
\[
M(\chi;Q,\varepsilon):=\{f\in\mathscr{F}(Q)\colon \textup{if $p\leq (\log Q)^{c(\delta)+\varepsilon}$ and $p\nmid q_{f}q$, then $\lambda_{f}(p)=\lambda_{f}(p)\chi(p)$}\}.
\]
Then, there is a  (small) $\varepsilon>0$ (depending on $\delta$) such that 
\begin{align*}
|M(\chi;Q,\varepsilon)|\ll_{\varepsilon}\begin{cases}
|\mathscr{F}(Q)|^{1-\frac{1}{2\delta}-\varepsilon}&\mbox{if each $f\in\mathscr{F}(Q)$ has squarefree conductor,}\\ 
|\mathscr{F}(Q)|^{1-\varepsilon}&\mbox{otherwise.}
\end{cases}
\end{align*}
The first case is nontrivial when $\delta>\frac{1}{2}$.
\end{proposition}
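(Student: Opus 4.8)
The plan is to follow the proof of Proposition~\ref{thm:multiplicity_one_1} essentially verbatim, with the fixed newform $f_0$ replaced by the fixed quadratic character $\chi$. The arithmetic input is this: for $f\in M(\chi;Q,\varepsilon)$ and a prime $p\leq x:=(\log Q)^{c(\delta)+\varepsilon}$ (cf.\ \eqref{eq: x-logQ}) with $p\nmid q_f q$, the defining relation $\lambda_f(p)=\lambda_f(p)\chi(p)$ is equivalent to $\lambda_f(p)^2(1-\chi(p))=0$, hence to $\lambda_f(p)^2=\lambda_f(p)^2\chi(p)$; using $\lambda_f(p)^2=1+\lambda_{\mathrm{Sym}^2 f}(p)$ and $\lambda_{\mathrm{Sym}^2 f\otimes\chi}(p)=\chi(p)\lambda_{\mathrm{Sym}^2 f}(p)$, this becomes
\[
1+\lambda_{\mathrm{Sym}^2 f}(p)=\chi(p)+\lambda_{\mathrm{Sym}^2 f\otimes\chi}(p),\qquad p\leq x,\quad p\nmid q_f q.
\]
In contrast with Proposition~\ref{thm:multiplicity_one_1}, no symmetric fourth power enters, since this identity is linear, not quadratic, in $\lambda_{\mathrm{Sym}^2 f}$; the four relevant $L$-functions are $\zeta(s)$, $L(s,\mathrm{Sym}^2 f)$, $L(s,\chi)$ and $L(s,\mathrm{Sym}^2 f\otimes\chi)$. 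Since $\mathscr{F}$ consists of non-dihedral newforms, $\mathrm{Sym}^2 f$ and $\mathrm{Sym}^2 f\otimes\chi$ are cuspidal on $\mathrm{GL}_3$ (so the latter three are entire), and comparing central characters gives $\mathrm{Sym}^2 f\otimes\chi\not\cong\mathrm{Sym}^2 f$.

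Next I would run the explicit formula exactly as in the display chain preceding Proposition~\ref{thm:multiplicity_one_1}: picking a smooth $\phi$ supported in $[1/2,1]$ with $\widehat{\phi}(1)=1$, inserting the ramified primes and all higher prime powers at a cost of $O(\sqrt{x})$ via Deligne's bound, applying Mellin inversion and pushing the contour past the single pole at $s=1$ coming from $\zeta(s)$, one obtains
\[
x=\sum_{L(\rho,\mathrm{Sym}^2 f)=0}x^{\rho}\widehat{\phi}(\rho)-\sum_{L(\rho,\mathrm{Sym}^2 f\otimes\chi)=0}x^{\rho}\widehat{\phi}(\rho)+\sum_{\zeta(\rho)=0}x^{\rho}\widehat{\phi}(\rho)-\sum_{L(\rho,\chi)=0}x^{\rho}\widehat{\phi}(\rho)+O(\sqrt{x}).
\]
The one genuinely new point is the treatment of the ``fixed'' part $\zeta(s)$ and $L(s,\chi)$: because $\chi$ has conductor $q\ll 1$, it ranges over a finite set of characters, and for each of them $L(s,\chi)$ — like $\zeta(s)$ — satisfies the classical zero-free region, with any exceptional real zero bounded away from $1$ by an absolute constant (there is no Siegel-zero obstruction for a \emph{fixed} modulus). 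Hence the smooth prime number theorem gives $\sum_{\zeta(\rho)=0}x^{\rho}\widehat{\phi}(\rho)$ and $\sum_{L(\rho,\chi)=0}x^{\rho}\widehat{\phi}(\rho)$ are both $\ll x\exp(-c\sqrt{\log x})=o(x)$. This is exactly why the proposition requires \emph{no} zero-free-region hypothesis: the roles played in Proposition~\ref{thm:multiplicity_one_1} by $L(s,\mathrm{Sym}^2 f_0)$ and $L(s,\mathrm{Sym}^4 f_0)$ are here played by $\zeta(s)$ and $L(s,\chi)$, for which the required zero information is unconditional and uniform over the finite family.

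It then remains to argue as before. We get $x\ll\bigl|\sum_{L(\rho,\mathrm{Sym}^2 f)=0}x^{\rho}\widehat{\phi}(\rho)\bigr|+\bigl|\sum_{L(\rho,\mathrm{Sym}^2 f\otimes\chi)=0}x^{\rho}\widehat{\phi}(\rho)\bigr|$; truncating the zeros to $|\gamma|\leq T:=Q$ at a cost of $O(x\log(QT)/T^2)=o(x)$ and using $\#\{\rho:|\gamma-t|\leq 1\}\ll\log(Q(|t|+3))$, if both $L(s,\mathrm{Sym}^2 f)$ and $L(s,\mathrm{Sym}^2 f\otimes\chi)$ were zero-free in $\mathcal{R}(\sigma,T)$ with $\sigma=1-\tfrac{1-1/(2\delta)-\varepsilon}{4+922.5/\delta}$, both sums would be $\ll x^{\sigma}\int_1^T\frac{\log(Q(t+3))}{t+1}\,dt\ll x^{\sigma+2/(c(\delta)+\varepsilon)}$, which by the choice of $c(\delta)$ in \eqref{eq. constant c} is $o(x)$ for $\varepsilon$ small — contradicting $x\ll o(x)$. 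Thus every $f\in M(\chi;Q,\varepsilon)$ has a zero of $L(s,\mathrm{Sym}^2 f)$ or of $L(s,\mathrm{Sym}^2 f\otimes\chi)$ in $\mathcal{R}(\sigma,T)$, so
\[
|M(\chi;Q,\varepsilon)|\leq\sum_{f\in\mathscr{F}(Q)}N_{\mathrm{Sym}^2 f}(\sigma,T)+\sum_{f\in\mathscr{F}(Q)}N_{\mathrm{Sym}^2 f\otimes\chi}(\sigma,T),
\]
and Corollary~\ref{cor:final_ZDE} (its first estimate, applied to $\chi$ and to the trivial character, both of bounded conductor) finishes the proof: since $166.05<922.5$, the exponent $\frac{1}{2\delta}+(4+\frac{166.05}{\delta})(1-\sigma)+\frac{\varepsilon}{5}$ is strictly below $1$, and drops to strictly below $1-\frac{1}{2\delta}$ when every conductor is squarefree, giving the two claimed bounds. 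The main obstacle, as indicated, is the bookkeeping around $L(s,\chi)$: one must make sure that ``$\chi$ of absolutely bounded conductor'' really does permit discarding the zero contribution of $L(s,\chi)$ unconditionally, with all implied constants independent of which such $\chi$ is chosen; everything else is a transcription of the proof of Proposition~\ref{thm:multiplicity_one_1} with the $\mathrm{Sym}^4$ terms deleted.
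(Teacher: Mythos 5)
Your proposal reproduces the paper's own sketch of this proposition essentially verbatim: the same reduction of $\lambda_f(p)=\lambda_f(p)\chi(p)$ to the Rankin–Selberg identity $1+\lambda_{\mathrm{Sym}^2 f}(p)=\chi(p)+\lambda_{\mathrm{Sym}^2 f\otimes\chi}(p)$, the same explicit-formula manipulation with $\zeta(s)$, $L(s,\chi)$, $L(s,\mathrm{Sym}^2 f)$, $L(s,\mathrm{Sym}^2 f\otimes\chi)$, the same use of the classical zero-free region (the paper's ``prime number theorem for arithmetic progressions, since the conductor of $\chi$ is $O(1)$'') to discard the $\zeta$ and $L(s,\chi)$ zero sums, the same truncation at $T=Q$ and the same choice of $\sigma$, and the same final appeal to Corollary~\ref{cor:final_ZDE} applied to $\mathrm{Sym}^2 f\otimes\chi$ and $\mathrm{Sym}^2 f$. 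The only differences are expository — you spell out why the fixed-modulus $\chi$ avoids Siegel-zero issues and note $\mathrm{Sym}^2 f\otimes\chi\not\cong\mathrm{Sym}^2 f$ via central characters — neither of which changes the route.
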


{\bf Sketch:} Let $\phi\colon\mathbb{R}\to\mathbb{R}$ be  an infinitely differentiable function supported in $[1/2,1]$ with $\widehat{\phi}(1)=1$.  If $f\in M(\chi;Q,\varepsilon)$, then   $\lambda_{f}(p)=\lambda_{f}(p)\chi(p)$ for all $p\leq x = (\log Q)^{c(\delta)+\varepsilon}$, hence we have
\[
\sum_{n=1}^{\infty}\Lambda_{f\times f}(n)\phi(n/x)=\sum_{n=1}^{\infty}\Lambda_{f\times f}(n)\chi(n)\phi(n/x)+O(\sqrt{x}).
\]

By Mellin inversion, pushing the contour and estimating the contribution from the trivial zeros trivially,
\[
x - \sum_{\substack{\rho = \beta+i\gamma \\ \zeta(\rho)=0}}x^{\rho}\widehat{\phi}(\rho)-\sum_{\substack{\rho = \beta+i\gamma \\ L(\rho,\mathrm{Sym}^2f)=0}}x^{\rho}\widehat{\phi}(\rho)=- \sum_{\substack{\rho = \beta+i\gamma \\ L(s,\chi)=0}}x^{\rho}\widehat{\phi}(\rho)-\sum_{\substack{\rho = \beta+i\gamma \\ L(\rho,\mathrm{Sym}^2f\otimes\chi)=0}}x^{\rho}\widehat{\phi}(\rho)+O(\sqrt{x}).
\]
Prime number theorem for arithmetic progressions (since the conductor of $\chi$ is $O(1)$):
\begin{equation}
\label{eqn:PNTAP_classical}
x\asymp\Big|\sum_{\substack{\rho = \beta+i\gamma \\ L(\rho,\mathrm{Sym}^2f)=0}}x^{\rho}\widehat{\phi}(\rho)-\sum_{\substack{\rho = \beta+i\gamma \\ L(\rho,\mathrm{Sym}^2f\otimes\chi)=0}}x^{\rho}\widehat{\phi}(\rho)\Big|.
\end{equation} 

By trivially estimating zeros with $|\gamma|>T$,
\begin{align*}
&\sum_{\substack{\rho = \beta+i\gamma \\ L(\rho,\mathrm{Sym}^2f)=0}}x^{\rho}\widehat{\phi}(\rho)-\sum_{\substack{\rho = \beta+i\gamma \\ L(\rho,\mathrm{Sym}^2f\otimes\chi)=0}}x^{\rho}\widehat{\phi}(\rho)\\
&\ll \sum_{\substack{\rho = \beta+i\gamma \\ |\gamma|\leq T \\ L(\rho,\mathrm{Sym}^2f)=0}}\frac{x^{\beta}}{1+|\gamma|}+\sum_{\substack{\rho = \beta+i\gamma \\ |\gamma|\leq T\\  L(\rho,\mathrm{Sym}^2f\otimes\chi)=0}}\frac{x^{\beta}}{1+|\gamma|}+\frac{x\log(QT)}{T^2}.
\end{align*}
Let
\[
T = Q,\qquad \sigma = 1-\frac{1-\frac{1}{2\delta}-\varepsilon}{4+\frac{922.5}{\delta}},\qquad \mathcal{R}(\sigma,T)=\{s\in\mathbb{C}\colon \mathop{\mathrm{Re}}(s)\geq \sigma,~|\mathop{\mathrm{Im}}(s)|\leq T\}.
\]
Assume $f\in M(\chi;Q,\varepsilon)$ is taken such that $L(s,\mathrm{Sym}^2f)L(s,\mathrm{Sym}^2f\times\chi)$ has no zero in $\mathcal{R}(\sigma,T)$. Since $L(s,\mathrm{Sym}^2f)$ (resp. $L(s,\mathrm{Sym}^2f\times\chi)$) have $O(\log(Q(|t|+3)))$ nontrivial zeros $\beta+i\gamma$ such that $|\gamma-t|\leq 1$ \cite[Proposition 5.7(1)]{IwaniecKowalski2004}. It follows that
\begin{align*}
&\sum_{\substack{\rho=\beta+i\gamma \\ L(\rho,\mathrm{Sym}^2f)=0 \\ |\gamma|\leq T}}\frac{x^{\beta}}{|\gamma|+1} +\frac{x\log(Q T)}{T^2}\ll x^{\sigma}\int_1^T \frac{\log(Q(t+3))}{t+1}dt+\frac{x\log(QT)}{T^2}=o(x)
\end{align*}
and
\[
\sum_{\substack{\rho = \beta+i\gamma \\ |\gamma|\leq T\\  L(\rho,\mathrm{Sym}^2f \otimes\chi)=0}}\frac{x^{\beta}}{1+|\gamma|}\ll x^{\sigma}\int_1^T \frac{\log(Q(t+3))}{t+1}dt=o(x).
\]
 Therefore, by \eqref{eqn:PNTAP_classical}, we have $x=o(x)$ as $Q$ (hence $x$) grows, a contradiction.  
Therefore, $f\in M(\chi;Q,\varepsilon) $ implies that either $L(s,\mathrm{Sym}^2f)$ or $L(s,\mathrm{Sym}^2f\times\chi)$  has a zero in $\mathcal{R}(\sigma,T).$ Then $|M(\chi;Q,\varepsilon)|$ is bounded  by the first two parts of Corollary \ref{cor:final_ZDE}. This completes the proof of  \Cref{prop.comparision-thm-character}.
\qed

\subsection{Linnik type results}\label{subsec: dyadic refinement}
We keep the notation from \S \ref{subsec: zero density}. 
In this section, we will replace the $\log Q$ in $M(f_0,Q,\varepsilon)$ (resp. $M(\chi;Q,\varepsilon)$) by $\log q_f.$ We will use a dyadic division trick as follows. Fix $f_0$ satisfying conditions in Proposition \ref{thm:multiplicity_one_1}. It follows from \Cref{thm:multiplicity_one_1} that if $Q\geq 3$, then
\begin{align*}
&\#\{f\in\mathscr{F}(Q)-\mathscr{F}(Q/2)\colon \textup{if $p\leq (\max\{\log C(f),\log C(f_0)\})^{c(\delta)+\varepsilon}$ and $p\nmid q_{f}q_{f_0}$, then $|\lambda_{f}(p)|=|\lambda_{f_0}(p)|$}\}\\
&\ll_{\varepsilon}\begin{cases}
|\mathscr{F}(Q)|^{1-\frac{1}{2\delta}-\varepsilon}&\mbox{if each $f \in\mathscr{F}(Q)$ has squarefree conductor,}\\
|\mathscr{F}(Q)|^{1-\varepsilon}&\mbox{otherwise.}
\end{cases}
\end{align*}

We decompose $[1000,Q]$ into $O(\log Q)$ dyadic subintervals, applying the above bound to each of them.  Since $\log Q\leq \kappa^{-1}Q^{\kappa}$ for all $Q\geq 1$ and $\kappa>0$, we conclude (upon 
rescaling $\varepsilon$ and choosing $\kappa$ in terms of $\varepsilon$) the following theorem.
\begin{theorem}\label{thm. multi}
Fix $\delta>0$.   Let $Q\geq 5$ and $\mathscr{F}$ be a set of newforms associated to non-CM elliptic curves over $\Q$, satisfying $|\mathscr{F}(Q)|\gg Q^{\delta}$. Fix $f_0\in\mathscr{F}(Q)$ and assume that $L(s,\mathrm{Sym}^2f_0)L(s,\mathrm{Sym}^4f_0)$ has no zero in $\mathcal{R}(\sigma,T)$. 
For any $\epsilon>0$, set
\[
\mathcal{M}(f_0;Q,\varepsilon)=\{f\in\mathscr{F}(Q)\colon \textup{if $p\leq (\max(\log q_f,\log q_{f_0}))^{c(\delta)+\varepsilon}$ and $p\nmid q_{f}q_{f_0}$, then $|\lambda_{f}(p)|=|\lambda_{f_0}(p)|$}\}.
\]
Then, there is a  small $\varepsilon>0$ (depending on $\delta$) such that 
\begin{align*}
\mathcal{M}(f_0;Q,\varepsilon)\ll_{\varepsilon}\begin{cases}
|\mathscr{F}(Q)|^{1-\frac{1}{2\delta}-\varepsilon}&\mbox{if each $\pi\in\mathscr{F}(Q)$ has squarefree conductor,}\\
|\mathscr{F}(Q)|^{1-\varepsilon}&\mbox{otherwise.}
\end{cases}
\end{align*}
The first case is nontrivial when $\delta>\frac{1}{2}$.
\end{theorem}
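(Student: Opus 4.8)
The plan is to deduce \Cref{thm. multi} from \Cref{thm:multiplicity_one_1} by a dyadic decomposition in the conductor. Any such $\mathscr{F}$ consists of weight-$2$ non-dihedral newforms, so \Cref{thm:multiplicity_one_1} and \Cref{cor:final_ZDE} are available in this setting. The only difference between $\mathcal{M}(f_0;Q,\varepsilon)$ and the set $M(f_0;Q,\varepsilon)$ of \Cref{thm:multiplicity_one_1} is that the range of tested primes is $p\le(\max(\log q_f,\log q_{f_0}))^{c(\delta)+\varepsilon}$ rather than the uniform $p\le(\log Q)^{c(\delta)+\varepsilon}$; the point is that on a dyadic window $q_f\in(Q',2Q']$ one has $\log q_f\asymp\log Q'$, so the two cutoffs agree up to a bounded multiplicative factor, which can be absorbed by shrinking $\varepsilon$ once $Q'$ is large. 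Observe also that the zero-free region hypothesis on $L(s,\mathrm{Sym}^2f_0)L(s,\mathrm{Sym}^4f_0)$ in $\mathcal{R}(\sigma,T)$ with $T=Q$ descends to every smaller scale, since $\sigma$ depends only on $\delta$ and $\varepsilon$ and $\mathcal{R}(\sigma,T')\subseteq\mathcal{R}(\sigma,Q)$ whenever $T'\le Q$.

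Concretely, I would split $\mathcal{M}(f_0;Q,\varepsilon)$ into the forms with $q_f\le q_{f_0}$ and those with $q_f>q_{f_0}$. The first piece coincides with $M(f_0;q_{f_0},\varepsilon)$ (there $\max(\log q_f,\log q_{f_0})=\log q_{f_0}$); it is $O(1)$ when $q_{f_0}$ stays bounded and is otherwise controlled by re-running the proof of \Cref{thm:multiplicity_one_1} at parameter $q_{f_0}$. The second piece I would cover by $O(\log Q)$ dyadic windows $(Q',2Q']\subseteq[\max(1000,q_{f_0}),Q]$; on each of them $\max(\log q_f,\log q_{f_0})=\log q_f\asymp\log Q'$ and $f_0\in\mathscr{F}(2Q')$, so the explicit-formula/zero-density argument of \Cref{thm:multiplicity_one_1} applies at parameter $2Q'$. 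The one cosmetic adjustment is that the size hypothesis $|\mathscr{F}(Q)|\gg Q^{\delta}$ is assumed only at the top level $Q$: I would therefore run the zero-density input (\Cref{thm:ZDE} and \Cref{cor:final_ZDE}, both unconditional in this respect) at level $2Q'$ but record the outcome through $|\mathscr{F}(2Q')|\le|\mathscr{F}(Q)|$ and $2Q'\le Q\ll|\mathscr{F}(Q)|^{1/\delta}$, so that each window contributes $\ll_{\varepsilon}|\mathscr{F}(Q)|^{1-\frac{1}{2\delta}-\varepsilon}$ in the squarefree case and $\ll_{\varepsilon}|\mathscr{F}(Q)|^{1-\varepsilon}$ otherwise, exactly as in \Cref{thm:multiplicity_one_1}. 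For the windows with $Q'$ below a slowly growing cutoff, say $Q'\le(\log Q)^{A}$, where the explicit-formula step is not yet effective because $x=(\log Q')^{c(\delta)+\varepsilon}$ is not large, I would instead use the trivial bound $|\mathscr{F}(2Q')|\ll(\log Q)^{A(1+\varepsilon)}$; summed over the $O(\log\log Q)$ such windows this is polylogarithmic and hence negligible.

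Finally, summing the $O(\log Q)$ window contributions introduces a single factor of $\log Q$, which is absorbed into the exponent via $\log Q\le\kappa^{-1}Q^{\kappa}\ll|\mathscr{F}(Q)|^{\kappa/\delta}$ for any $\kappa>0$, at the cost of replacing $\varepsilon$ by a slightly smaller value; choosing $\kappa$ in terms of $\varepsilon$ and $\delta$ yields the two stated bounds, and in the squarefree case the exponent $1-\frac{1}{2\delta}-\varepsilon$ survives precisely because the last clause of \Cref{cor:final_ZDE} removes the $\frac{1}{2\delta}$ coming from the sizes of the equivalence classes under $\sim_2$ and $\sim_4$. The main obstacle here is organizational rather than conceptual: one must check that the implied constants in \Cref{thm:multiplicity_one_1} and \Cref{cor:final_ZDE} are uniform as the auxiliary parameter ranges over the dyadic scales (which is the role of the ``$Q$ large'' hypothesis), that the zero-free region assumption on $f_0$ transfers to all smaller scales, and that the $q_{f_0}$-dependence in the cutoff is disposed of by the split above; once this bookkeeping is arranged, the deduction is routine.
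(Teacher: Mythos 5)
Your proposal is correct and is the paper's own argument: a dyadic decomposition of the conductor range $[1000,Q]$, application of Proposition \ref{thm:multiplicity_one_1} on each window, and absorption of the resulting $O(\log Q)$ factor via $\log Q\le\kappa^{-1}Q^{\kappa}$ with $\kappa$ chosen in terms of $\varepsilon$. The bookkeeping you flag (the size hypothesis $|\mathscr{F}(Q)|\gg Q^{\delta}$ holding only at the top level, the zero-free region descending to the smaller rectangles $\mathcal{R}(\sigma,T')$, the window $q_f\le q_{f_0}$, and the negligible small scales) is exactly what the paper's terse one-paragraph write-up leaves implicit, and your resolutions of each are sound.
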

To prove Theorem \ref{thm. multi}, we also use the fact that, for $f$ of fixed weight, $\log C(f)=\log q_f+O(1).$

A similar argument  shows the following.
\begin{theorem}
\label{comparision-thm-character}
Fix $\delta>0$.    Let $Q\geq 5$ and assume that $|\mathscr{F}(Q)|\gg Q^{\delta}$. For  any primitive quadratic character  $\chi\pmod{q}$   of conductor $q\ll 1$ and $\varepsilon>0$, set
\[
\mathcal{M}(\chi;Q,\varepsilon)=\{f\in\mathscr{F}(Q)\colon \textup{if $p\leq (\log q_f)^{c(\delta)+\varepsilon}$ and $p\nmid q_{f}q$, then $\lambda_{f}(p)=\lambda_{f}(p)\chi(p)$}\}.
\]
Then, there is a small $\varepsilon>0$ (depending on $\delta$) such that 
\begin{align*}
|\mathcal{M}(\chi;Q,\varepsilon)|\ll_{\varepsilon}\begin{cases}
|\mathscr{F}(Q)|^{1-\frac{1}{2\delta}-\varepsilon}&\mbox{if each $f\in\mathscr{F}(Q)$ has squarefree conductor,}\\
|\mathscr{F}(Q)|^{1-\varepsilon}&\mbox{otherwise.}
\end{cases}
\end{align*}
The first case is nontrivial when $\delta>\frac{1}{2}$.
\end{theorem}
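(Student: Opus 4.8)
The plan is to derive \Cref{comparision-thm-character} from \Cref{prop.comparision-thm-character} by the same dyadic decomposition that turns \Cref{thm:multiplicity_one_1} into \Cref{thm. multi}, the only structural simplification being that no auxiliary form $f_0$ — and hence no $\max$ in the threshold — appears: the fixed quadratic character $\chi$ has conductor $q\ll 1$, so $\log q=O(1)$ never competes with $\log q_f$. Throughout I would use that for newforms of the fixed weight in question $C(f)\asymp q_f$, so $\log C(f)=\log q_f+O(1)$ and the thresholds $(\log q_f)^{c(\delta)+\varepsilon}$ and $(\log C(f))^{c(\delta)+\varepsilon}$ agree up to a factor $1+O(1/\log q_f)$ that is harmless after an arbitrarily small change of $\varepsilon$.

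First I would split the conductor range into dyadic blocks: writing $Q_j=Q2^{-j}$ for $0\le j\ll\log Q$, the interval $[1000,Q]$ is covered by the blocks $(Q_{j+1},Q_j]$ together with the finitely many $f$ of bounded conductor, which contribute $O(1)$ to the final count. For $f\in\mathscr{F}(Q)$ with $C(f)\in(Q_{j+1},Q_j]$ one has $\log q_f=\log Q_j+O(1)$, so for any fixed $\varepsilon'<\varepsilon$ and all sufficiently large $Q_j$ one gets $(\log q_f)^{c(\delta)+\varepsilon}\ge(\log Q_j)^{c(\delta)+\varepsilon'}$; consequently membership of such an $f$ in $\mathcal{M}(\chi;Q,\varepsilon)$ forces $\lambda_f(p)=\lambda_f(p)\chi(p)$ for every $p\le(\log Q_j)^{c(\delta)+\varepsilon'}$ with $p\nmid q_f q$, i.e.\ $f\in M(\chi;Q_j,\varepsilon')$ in the notation of \Cref{prop.comparision-thm-character}. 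Applying that proposition at scale $Q_j$ bounds the number of these $f$ by $O_\varepsilon(|\mathscr{F}(Q_j)|^{1-\frac{1}{2\delta}-\varepsilon'})$ in the squarefree case and by $O_\varepsilon(|\mathscr{F}(Q_j)|^{1-\varepsilon'})$ in general, and since $\mathscr{F}(Q_j)\subseteq\mathscr{F}(Q)$ each such bound is at most the same expression with $|\mathscr{F}(Q)|$ in place of $|\mathscr{F}(Q_j)|$.

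Summing over the $O(\log Q)$ blocks and using $|\mathscr{F}(Q)|\gg Q^\delta$ (so $Q\ll|\mathscr{F}(Q)|^{1/\delta}$) together with $\log Q\ll_\kappa Q^\kappa$ for every $\kappa>0$, the total is $\ll_\varepsilon |\mathscr{F}(Q)|^{\kappa/\delta}|\mathscr{F}(Q)|^{1-\frac{1}{2\delta}-\varepsilon'}$ (resp.\ $|\mathscr{F}(Q)|^{\kappa/\delta}|\mathscr{F}(Q)|^{1-\varepsilon'}$); choosing $\kappa$ small in terms of $\varepsilon-\varepsilon'$ and relabelling $\varepsilon$ gives the stated estimate, the first case being nontrivial precisely when $\delta>\frac{1}{2}$. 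The step that deserves the most care — as already in \Cref{thm. multi} — is that invoking \Cref{prop.comparision-thm-character} at a sub-scale $Q_j$ presupposes $|\mathscr{F}(Q_j)|\gg Q_j^\delta$; if one does not wish to assume this lower bound holds uniformly in the scale, one instead re-runs the explicit-formula argument behind \Cref{prop.comparision-thm-character} directly at the top scale (keeping $T=Q$ and $|\mathscr{F}(Q)|$ in the input supplied by \Cref{cor:final_ZDE}) while restricting only the set of $f$ being counted to the block $C(f)\in(Q_{j+1},Q_j]$, on which the threshold is still $\asymp(\log Q_j)^{c(\delta)+\varepsilon}$; the zero-free-region hypothesis on $L(s,\mathrm{Sym}^2f)L(s,\mathrm{Sym}^2f\otimes\chi)$ then again forces a zero in $\mathcal{R}(\sigma,T)$ and one concludes exactly as in \S\ref{subsec: explicit formula}. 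Either route is routine and introduces no ingredient beyond those assembled in \S\ref{subsec: zero density}--\S\ref{subsec: explicit formula}.
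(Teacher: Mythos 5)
Your proposal mirrors the paper's own route exactly: Theorem \ref{comparision-thm-character} is obtained from Proposition \ref{prop.comparision-thm-character} by the same dyadic decomposition used to pass from Proposition \ref{thm:multiplicity_one_1} to Theorem \ref{thm. multi}, with the simplification that the fixed character $\chi$ has bounded conductor so no $\max$ with $\log q_{f_0}$ appears. You also correctly flag a subtlety that the paper's sketch passes over, namely that invoking Proposition \ref{prop.comparision-thm-character} at a sub-scale $Q_j$ tacitly presupposes $|\mathscr{F}(Q_j)|\gg Q_j^\delta$.

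One technical correction to your second workaround: keeping $T=Q$ in the explicit formula is not safe for forms in a low dyadic block. With $x=(\log q_f)^{c(\delta)+\varepsilon}$ and $T=Q$, the contribution of zeros with $\mathop{\mathrm{Re}}(\rho)<\sigma$ is $\ll x^{\sigma}(\log Q)^2$, and since $x^{1-\sigma}\asymp(\log q_f)^{2+O(\varepsilon)}$ this is $o(x)$ only when $(\log Q)^2=o\bigl((\log q_f)^{2+O(\varepsilon)}\bigr)$, which fails once $q_f$ is substantially smaller than $Q$ (say $q_f\leq\exp((\log Q)^{1-\eta})$). The fix is to take $T=q_f$ (or $T=Q_j$) in the explicit formula for forms in that block: then the error is $\ll x^{\sigma}(\log q_f)^2+\tfrac{x\log q_f}{q_f^2}=o(x)$ uniformly once $q_f$ exceeds an absolute threshold, the forced zero lies in $\mathcal{R}(\sigma,q_f)\subseteq\mathcal{R}(\sigma,Q)$, and Corollary \ref{cor:final_ZDE} applied once at the top scale $Q$ bounds the count; this in fact removes the need to absorb an $O(\log Q)$ factor at all. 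With that modification both of your routes go through, and the argument is the paper's.
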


\begin{remark}
In \Cref{thm. multi} (resp. \Cref{comparision-thm-character}), if we  assume GRH for $L(s,\mathrm{Sym}^2f\times\mathrm{Sym}^2f_0)$ and $L(s,\mathrm{Sym}^2f_0\times\mathrm{Sym}^2f_0)$ (resp. $L(s,\mathrm{Sym}^2f)$ and $L(s,\mathrm{Sym}^2f \otimes \chi)$), then there is a prime $p\ll (\max\{\log q_f, \log q_{f_0}\})^2$ (resp. $p\ll (\log q_f)^2$) such that $|\lambda_f(p)|\neq |\lambda_{f_0}(p)|$ (resp. $\lambda_f(p)\neq \chi(p) \lambda_{f}(p)$) \cite[Proposition 5.22]{IwaniecKowalski2004}.
\end{remark}

\section{Results for  elliptic curves}\label{Sec:single-ellptic}

We keep the notation form \S \ref{Sec:elliptic-curve} and \S \ref{Sec:mult-one-and-density}. 
We also specialize $\mathscr{F}$ to be a set of non-dihedral holomorphic cusp forms of weight $2$ associated to non-CM elliptic curves over $\Q$ by the modularity theorem. 
Note that  if we assume that an elliptic curve $E/\Q$ corresponds to a newform $f_E,$ then $a_p(E)=\lambda_{f_E}(p)p^{1/2}.$ 
We also recall that the arithmetic conductor $q_{f_E}$ of $f_E$ is exactly the conductor   $N_E$ of $E$.

\subsection{Proof of Theorem \ref{thm-uncod}}
\begin{proof}
Let $\ell> 37$ be a prime such that $\ell$ is a non-surjective prime. If this happens, then from \S \ref{sec: epilon}, we obtain a natural character $\varepsilon_\ell$ and the quadratic twist $E^{\varepsilon_\ell}$. Since $E$ is non-CM, $E^{\varepsilon_\ell}$ and $E$ are not isogenous over $\Q$.  
By part (2) of  Lemma \ref{lemma-epsilon} and \cite[Theorem 7.4]{Iwaniec1997} or \cite[Proposition 2.2]{Co2005},  primes of   multiplicative reduction  of $E$ remains to be so for   $E^{\varepsilon_{\ell}}$, while the exponent of additive primes of $E^{\varepsilon_{\ell}}$ can only decrease. 
Therefore, we obtain
\[ N_{E^{\varepsilon_\ell}} \mid 6^{100}N_E.  
\]
We denote $f_E$ and $f_{E^{\epsilon_\ell}}$ the weight 2 newforms associated to $E$ and $E^{\varepsilon_\ell}$ by the modularity theorem.
Applying  \cite[Lemma 11]{Mu1999}, 
we conclude that there is a positive integer $n$, coprime to $N_E$ such that  $a_n(f)\neq a_n(f')$ and 
\[
n \ll_{\varepsilon} N_E^{1+\varepsilon}. 
\]
In particular,  we can find a prime $p\mid n$ such that
$a_p(f_E)\neq a_p(f_{E^{\epsilon_\ell}})$ and $\varepsilon_\ell(\Frob_p)=-1$ (otherwise, we would have $a_p(f_E)= a_p(f_{E^{\epsilon_\ell}})$). Finally, we derive from part (3) of Lemma \ref{lemma-epsilon} that
\[
\ell \mid a_p(E) \ll 2\sqrt{p}\ll_{\varepsilon} N_E^{\frac{1+\varepsilon}{2}}.
\]

\end{proof}

\subsection{Proof of \Cref{thm-uncond-family inertia}}
We recall the notation from \S \ref{Sec:mult-one-and-density}.
\begin{proof}
Fix a sufficiently small  $\varepsilon>0$ and let $\delta$ be the constant in \Cref{thm-uncond-family inertia}. As the result holds for arbitrary small $\varepsilon>0$,  we will, unless otherwise necessary, treat any constant multiple of $\varepsilon$ as $\varepsilon$.
It suffices to show 
\begin{equation}\label{inert-excpetional-set}
   \# \{E\in \mathcal{E}'(N): c_0(E)>  (\log N_E)^{c(\delta)/2+\varepsilon}\}=O(|\mathcal{E}'(N)|^{1-\varepsilon}),
\end{equation}
where $c(\delta)$ is given by \eqref{eq. constant c}. 

By part i) of  \eqref{epsilon}, 
if $\ell$ is a prime such that  $\ell> 37$, is a non-surjective prime of $E\in \mathcal{E}'(N)$, and satisfies (\ref{c0E}), then  $\varepsilon_{\ell}(\cdot)=\chi_{a, b}(\cdot) :=\left(\frac{2^a3^b}{\cdot}\right)$ for some  positive integers $a, b$ such that $0\leq a, b\leq 3$ and $(a, b)\neq (0, 0)$. In particular, the quadratic character is independent of the choice of $E$ and $\ell$. 

For any $E$ counted by (\ref{inert-excpetional-set}), denote by  $\ell=c_0(E)$.  Let $p_0:=p(E)\nmid 6 N_E$ be the least prime such that 
\[
\varepsilon_{\ell}(\Frob_{p_0})=\chi_{a, b}(p_0)=-1 \; \text{ and } \; a_{p_0}(E)\neq 0
\]
or equivalently,
\[
\chi_{a, b}(p_0)a_{p_0}(E)\neq a_{p_0}(E).
\]
Then, by par ii) of Lemma \ref{lemma-epsilon}, we obtain 
\[
c_0(E)\mid a_{p_0}(E) \Longrightarrow (\log N_E)^{c(\delta)/2+\epsilon}<c_0(E) \leq |a_{p_0}(E)| \leq 2\sqrt{p_0}.
\]
Hence $p_0>\frac{1}{4}(\log N_E)^{c(\delta)+\varepsilon}$. Therefore, for any $p\leq \frac{1}{4}(\log N_E)^{c(\delta)+\varepsilon}$ and  $p\nmid N_E$, we have $\chi_{a, b}(p)a_p(E)=a_p(E)$.

Now for each pair $(a, b)$ with $0\leq a, b\leq 3$ and $(a, b)\neq (0, 0)$, we consider the family 
\[
\mathcal{F}^{a, b}(N):=\left\{E\in \mathcal{E}'(N): \text{ if $ p\leq \frac{1}{4}(\log N_E)^{c(\delta)+\varepsilon}$ and $p\nmid 6N_E$, then }
\chi_{a, b}(p)a_p(E)=a_p(E) 
\right\}
\]
From the discussion above,
\[
 \{E\in \mathcal{E}'(N): c_0(E)>  (\log N_E)^{c(\delta)/2+\varepsilon}\}\subseteq \bigcup_{\substack{0\leq a,b\leq 3\\ (a, b)\neq (0, 0)}}\mathcal{F}^{a, b}(N).
\]
Hence, it suffices to prove 
$
 \left|\mathcal{F}^{a, b}(N)\right|\ll_\varepsilon |\mathcal{E}'(N)|^{1-\varepsilon}.
$
Since $\mathcal{F}^{a, b}(N)$ is contained in  
\[
\mathcal{M}(\chi_{a, b};N,\varepsilon)=\{E \in\mathcal{E}'(N)\colon \textup{if $p\leq (\log N_{E})^{
c(\delta)+\varepsilon}$ and $p\nmid 6N_E$, then $a_p(E)=a_p(E)\chi_{a, b}(p)$}\}, 
\]
by applying Theorem \ref{comparision-thm-character} to the set $\mathcal{M}(\chi_{a, b};N,\varepsilon)$, we obtain 
\[
|\mathcal{F}^{a, b}(N)|\leq |\mathcal{M}(\chi_{a, b}; N, \epsilon)|\ll_{\varepsilon} |\mathcal{E}'(N)|^{1-\varepsilon}.
\]
This completes the proof of the theorem.
\end{proof}

\section{Results for products of elliptic curves}\label{Sec:pair-elliptic}

As before, we keep the notation form \S \ref{Sec:elliptic-curve}  and \S \ref{Sec:mult-one-and-density} 
and assume $\mathscr{F}\subset \mathfrak{F}_2$ is a set of non-dihedral newforms of weight $2$ associated to non-CM elliptic curves over $\Q$. We denote by $f_{E}$ the weight $2$ holomorphic newform corresponding to $E/\Q$.


\subsection{An unconditional result for a product of elliptic curves}\label{uncond-2}
In this section, we give an alternative proof for a power bound of $c(E_1\times E_2)$  for  a pair of elliptic curves $(E_1, E_2)$, which was first proved by Murty \cite{Mu1999}.

\begin{theorem}
\label{unconditional for two elliptic curves thm}
    Let $E_1/\Q$ and $E_2/\Q$ be two non-CM elliptic curves such that   $E_1$ and $E_2$ are not $\ol{\Q}$-isogenous.  If $\varepsilon>0$, then $c(E_1\times E_2)\ll_{\varepsilon} (\max\{N_{E_1},N_{E_2}\})^{3+\varepsilon}$.
\end{theorem}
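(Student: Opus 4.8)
Write $M=\max\{N_{E_1},N_{E_2}\}$. First I would invoke \Cref{cE1E2}, which gives
\[
c(E_1\times E_2)\le\max\bigl\{c(E_1),\,c(E_2),\,4\,p(E_1,E_2)^{1/2}\bigr\},
\]
and \Cref{thm-uncod}, which gives $c(E_i)\ll_\varepsilon N_{E_i}^{1/2+\varepsilon}\le M^{3+\varepsilon}$. So the whole problem reduces to the Linnik-type bound $p(E_1,E_2)\ll_\varepsilon M^{6+\varepsilon}$. Let $f_i:=f_{E_i}$ be the associated weight-$2$ newforms. Since $a_p(E_i)=\lambda_{f_i}(p)\sqrt p$ and $\lambda_{f_i}(p)^2=1+\lambda_{\mathrm{Sym}^2 f_i}(p)$, the condition $|a_p(E_1)|=|a_p(E_2)|$ for $p\nmid N_{E_1}N_{E_2}$ is equivalent to $\lambda_{\mathrm{Sym}^2 f_1}(p)=\lambda_{\mathrm{Sym}^2 f_2}(p)$. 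Because $E_1,E_2$ are non-CM, $\pi_i:=\mathrm{Sym}^2 f_i$ is a self-dual cuspidal automorphic representation of $\GL_3(\A_\Q)$ with trivial central character and arithmetic conductor $q_{\pi_i}\ll N_{E_i}^2$, and $\pi_1\ne\pi_2$: by \Cref{lem:Ramakrishnan} an equality $\mathrm{Sym}^2 f_1=\mathrm{Sym}^2 f_2$ would force $\pi_{f_2}=\pi_{f_1}\otimes\chi$ for a quadratic $\chi$, i.e.\ $E_2$ would be $\ol\Q$-isogenous to a quadratic twist of $E_1$, hence to $E_1$, contrary to hypothesis. Thus the target is an effective multiplicity-one statement: produce a prime $p\ll_\varepsilon M^{6+\varepsilon}$, $p\nmid N_{E_1}N_{E_2}$, with $\lambda_{\pi_1}(p)\ne\lambda_{\pi_2}(p)$.

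I would prove this by the explicit-formula comparison of \S\ref{subsec: explicit formula}, now with $\pi_1,\pi_2$ in place of $\mathrm{Sym}^2 f$ and $\mathrm{Sym}^2 f_0$. Suppose for contradiction that $\lambda_{\pi_1}(p)=\lambda_{\pi_2}(p)$ for every prime $p\le x:=M^{6+\varepsilon}$ coprime to $N_{E_1}N_{E_2}$. The Dirichlet series
\[
-\frac{L'}{L}(s,\pi_1\times\pi_1)-\frac{L'}{L}(s,\pi_2\times\pi_2)+2\,\frac{L'}{L}(s,\pi_1\times\pi_2)
\]
has $p$-th coefficient $(\lambda_{\pi_1}(p)-\lambda_{\pi_2}(p))^2\log p\ge 0$, which vanishes for $p\le x$. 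Smoothing against a fixed infinitely differentiable $\phi$ supported in $[1/2,1]$ with $\widehat\phi(1)=1$, applying Mellin inversion and pushing the contour past $s=1$, the main term is $2x$: each $L(s,\pi_i\times\pi_i)$ has a simple pole at $s=1$, while $L(s,\pi_1\times\pi_2)$ is holomorphic and nonvanishing at $s=1$ by the standard theory of Rankin--Selberg $L$-functions. The prime-power terms are $O(\sqrt x\log x)$ by Deligne's bound, and the ramified primes (all $\le M^2<x/2$) do not meet the support of $\phi$. Using the factorization $L(s,\pi_i\times\pi_i)=\zeta(s)\,L(s,\mathrm{Sym}^2 f_i)\,L(s,\mathrm{Sym}^4 f_i)$, what remains is to bound the zero sums of $\zeta$, $L(s,\mathrm{Sym}^2 f_i)$, $L(s,\mathrm{Sym}^4 f_i)$ and $L(s,\pi_1\times\pi_2)$; showing each is $o(x)$ contradicts $2x+O(\sqrt x\log x)=(\text{zero sums})$ and forces $p(E_1,E_2)\le x$.

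For the zero sums I would separate the zeros near the real axis, handled by the classical de la Vall\'ee Poussin zero-free region once Siegel zeros are excluded. For $\zeta$ this is classical; for $L(s,\mathrm{Sym}^2 f_i)$, $L(s,\mathrm{Sym}^4 f_i)$ and $L(s,\mathrm{Sym}^2 f_1\times\mathrm{Sym}^2 f_2)$ it follows from Hoffstein--Ramakrishnan-type positivity, the isobaric Rankin--Selberg $L$-functions of $\mathbbm{1}\boxplus\pi_i$ and of $\mathbbm{1}\boxplus\pi_1\boxplus\pi_2$ having a pole of order $\ge 2$ at $s=1$, which precludes a real zero $\beta$ with $1-\beta\ll 1/\log M$ effectively. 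The remaining zeros are controlled by the log-free zero-density estimate \Cref{thm:ZDE} applied to the singleton families $\{\pi_i\}\subseteq\mathfrak{F}_3$, $\{\mathrm{Sym}^4 f_i\}\subseteq\mathfrak{F}_5$ and to $L(s,\pi_1\times\pi_2)$, together with the conductor bounds $C(\pi_i)\ll N_{E_i}^2$, $C(\mathrm{Sym}^4 f_i)\ll N_{E_i}^{5}$ and $C(\pi_1\times\pi_2)\ll M^{O(1)}$; taking the auxiliary height $T$ to be a fixed power of $M$ then makes every zero sum $o(x)$ for $x=M^{6+\varepsilon}$.

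\textbf{Main obstacle.} The skeleton above is routine given \S\ref{Sec:mult-one-and-density}; the real work is quantitative. One must (i) rule out exceptional (Siegel) zeros of $L(s,\mathrm{Sym}^4 f_i)$ and of $L(s,\mathrm{Sym}^2 f_1\times\mathrm{Sym}^2 f_2)$ \emph{effectively}, so that the implied constant in the final bound is effective, and (ii) carry the degrees and conductors of all the symmetric-power and Rankin--Selberg $L$-functions through the explicit formula and through \Cref{thm:ZDE} carefully enough to land on the target exponent (exponent $6$ for $p(E_1,E_2)$, hence $3$ for $c(E_1\times E_2)$) rather than some larger power. Keeping the weight-$2$ normalization $a_p(E)=\lambda_{f_E}(p)\sqrt p$ consistent throughout is a minor but persistent bookkeeping point.
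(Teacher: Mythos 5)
Your reduction to bounding $p(E_1,E_2)$, and the observation that this is a multiplicity-one statement for $\mathrm{Sym}^2 f_1$ versus $\mathrm{Sym}^2 f_2$, match the paper. But the engine you propose for the multiplicity-one step is different from the paper's, and it has a genuine quantitative gap. The paper does not run the explicit formula for a single pair at all: instead it mimics a Rankin--Selberg/Perron argument. It compares $S(X)=\sum_{(n,N_{E_1}N_{E_2})=1}\lambda_1(n)^2\psi(n/X)$ and $H(X)=\sum_{(n,N_{E_1}N_{E_2})=1}\lambda_1(n)\lambda_2(n)\psi(n/X)$, expresses them by Mellin inversion against $L(s,\mathrm{Sym}^2f_1\times\mathrm{Sym}^2f_1)$ and $L(s,\mathrm{Sym}^2f_1\times\mathrm{Sym}^2f_2)$, shifts the contour only to $\re(s)=3/4$, and bounds the shifted integral by Phragm\'en--Lindel\"of (convexity), together with upper and lower bounds for $L(1,\mathrm{Sym}^n f)$ (the effective lower bound absorbs the Siegel-zero issue). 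No zero-density input and no zero-free region beyond $\re(s)>1$ is needed, and the conductor saving $X^{1/4}\gg C^{1/8}$ from convexity is what ultimately produces the exponent $M^{3+\varepsilon}$.

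The gap in your plan is that for a \emph{single} pair the log-free zero-density estimate of \Cref{thm:ZDE} is far too weak to land on $x=M^{6+\varepsilon}$. With a singleton family, $|\mathcal{S}(Q)|=1$, and for $L(s,\mathrm{Sym}^2f_1\times\mathrm{Sym}^2f_2)$ one gets $N(\sigma,T)\ll(C(\pi_2)QT)^{6.15\cdot 9\,(1-\sigma+\varepsilon)}$, i.e.\ $N(\sigma,T)\ll M^{A'(1-\sigma+\varepsilon)}$ with $A'\geq 55.35\cdot(4+D)\geq 221$ once $T=M^D$. In a Linnik-style dyadic decomposition of $1-\beta$ (with the de la Vall\'ee Poussin region giving $1-\beta\gg 1/\log(CT)$), the geometric series over the dyadic ranges converges only when $\log x/\log(CT)$ exceeds the zero-density exponent, i.e.\ one needs $x\geq M^{A'+o(1)}\approx M^{221}$. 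Even granting effective Siegel-zero exclusion for $\mathrm{Sym}^4 f_i$ and for $\mathrm{Sym}^2 f_1\times\mathrm{Sym}^2 f_2$, this route yields roughly $p(E_1,E_2)\ll M^{221+\varepsilon}$, hence $c(E_1\times E_2)\ll M^{111+\varepsilon}$, not $M^{3+\varepsilon}$. Zero-density estimates are decisive in \S\ref{Sec:mult-one-and-density} precisely because they are applied on average over a family of size $Q^\delta$, so that the exceptional set of $f$ with a zero in $\mathcal{R}(\sigma,T)$ is negligible; for a single $L$-function the same theorem gives essentially no information about whether $\mathcal{R}(\sigma,T)$ is zero-free. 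To prove the stated individual bound you should replace the explicit-formula step by the Rankin--Selberg mean-value comparison with convexity, which is what \S\ref{uncond-2} does.
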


For each $i\in \{1, 2\}$, if $\re(s)>1,$ we write 
\[L(s,\mathrm{Sym}^2f_{E_i})=\sum_{n\geq1}\frac{\lambda_{i}(n)}{n^s}.\]
Then let $\psi$ be a smooth function compactly supported on $[1,2]$ and $X>0.$ Assume that $N_{E_1}\leq N_{E_2}.$ We consider the following sums:
\[S(X)=\sum_{\substack{n\geq1,\\(n,N_{E_1}N_{E_2})=1}}\lambda_1(n)^2\psi(n/X)\]
and
\[H(X)=\sum_{\substack{n\geq1,\\(n,N_{E_1}N_{E_2})=1}}\lambda_1(n)\lambda_2(n)\psi(n/X).\]

Denote by $\widehat{\psi}(s)$ the Mellin transformation of $\psi(x).$ Then we can find $G(s)$ such that
\[S(X)=\frac{1}{2\pi i}\int_{(2)}\widehat{\psi}(s)G(s)L(s,\mathrm{Sym}^2f_{E_1}\times\mathrm{Sym}^2f_{E_1})X^s\,ds.\]

Here $G(s)$ satisfies: 
\begin{enumerate}[(a)]
\item
$G(s)$ is defined by a Euler product and absolutely convergent when $\re(s)>1/2.$ This gives $G(s)\ll_{\varepsilon} 1$ when $\re(s)\geq \frac{1}{2}+\varepsilon$. 
\item
When $\re(s)\geq\frac{3}{4}$, we have $G(s)\gg1$. 
\end{enumerate}
Shifting the integration line to $\re(s)=\frac{3}{4}$, we obtain:
\[S(X)=G(1)XL(1,\mathrm{Sym}^2f_{E_1})L(1,\mathrm{Sym}^4f_{E_1})+O(X^{3/4}(N_{\mathrm{Sym}^2f_{E_1}}N_{\mathrm{Sym}^4f_{E_1}})^{\frac{1}{2}(1-3/4)}(\log N_{E_1})^{3+\varepsilon}).\]
This is due to the following facts: 
\begin{enumerate}[(a)]
\item  for any holomorphic cusp forms $f$ with analytic conductor $C(f),$
\[L(1,\mathrm{Sym}^nf)\ll (\log C(f))^{n+1}\]
\item for any cuspidal automorphic representation $\pi$ (with analytic conductor $C(\pi)$) satisfying Generalized Ramanujan Conjecture (see \cite{Soundararajan2010}),
\[
L(1/2,\pi)\ll\frac{C(\pi)^{\frac{1}{4}}}{(\log C(\pi))^{1-\varepsilon}}.\]
\end{enumerate}

Then apply the Phragm\'en–Lindel\"of principle and one obtains the bound  
\[L(3/4+it,\mathrm{Sym}^2f_{E_1}\times\mathrm{Sym}^2f_{E_1})\ll (1+|t|)^{A}(N_{\mathrm{Sym}^2f_{E_1}}N_{\mathrm{Sym}^4f_{E_1}})^{\frac{1}{2}(1-3/4)}(\log N_{E_2})^{3+\varepsilon}).\]
Then the error term comes from the rapid decay of $\widehat{\psi}.$

On the other hand, one can show (e.g., c.f.\cite[Lemma 4.2]{CogdellMichel2004}, \cite[Corollary 1.1]{Zhao2023}):
\[L(1,\mathrm{Sym}^nf)\gg \frac{1}{(\log C(f))^{2n+2+\varepsilon}}.\]

This will imply
\[S(X)\gg \frac{X}{(\log N_{E_1})^{16+\varepsilon}}\]
provided that $X\gg (N_{{E_1}}N_{{E_2}})^{3}$ (notice that $N_{E_1}\leq N_{E_2}.$)

We use a similar way to analyze $H(X)$ and we obtain:
\[H(X)=O(X^{3/4}N(\mathrm{Sym}^2f_{E_1}\otimes\mathrm{Sym}^2f_{E_2})^{\frac{1}{2}(1-3/4)}(\log N_{E_2})^4),\]
where $N(\mathrm{Sym}^2f_{E_1}\otimes\mathrm{Sym}^2f_{E_2})$ is the conductor of 
$\mathrm{Sym}^2f_{E_1}\otimes\mathrm{Sym}^2f_{E_2}$.

Assuming we can find $X>0$ such that   $X\gg N(\mathrm{Sym}^2f_{E_1}\otimes\mathrm{Sym}^2f_{E_2})^{\frac{1}{2}}(\log N_{E_1}N_{E_2})^{80+\varepsilon}$ and $\lambda_1(n)=\lambda_2(n)$ for all $n\leq 2X$, we would get  $S(X)=H(X)$. Hence
\[ \frac{X}{(\log N_{E_1})^{16+\varepsilon}}\ll X^{3/4}N(\mathrm{Sym}^2f_{E_1}\otimes\mathrm{Sym}^2f_{E_2})^{\frac{1}{2}(1-3/4)}(\log N_{E_2})^{4}.\]
This will force
\[X\ll N(\mathrm{Sym}^2f_{E_1}\otimes\mathrm{Sym}^2f_{E_2})^{\frac{1}{2}}(\log N_{E_1}N_{E_2})^{80+\varepsilon}.\]
A contradiction. Therefore, there is a sufficiently large constant $\Cl[abcon]{RS1}$ such that $\lambda_1(n)\neq \lambda_2(n)$ for some $n\leq \Cr{RS1} N(\mathrm{Sym}^2f_{E_1}\otimes\mathrm{Sym}^2f_{E_2})^{\frac{1}{2}}(\log N_{E_1}N_{E_2})^{80+\varepsilon}$.  Furthermore, we can assume that $(n,N_{E_1}N_{E_2})=1$ by the definition of $S(X),H(X)$. As a result, we can find a prime $p$ satisfying $(p,N_{E_1}N_{E_2})=1$ and $p\ll  N(\mathrm{Sym}^2f_{E_1}\otimes\mathrm{Sym}^2f_{E_2})^{\frac{1}{2}}(\log N_{E_1}N_{E_2})^{80+\varepsilon}$ such that $\lambda_1(p)\neq \lambda_2(p).$ This implies $\lambda_{f_{E_1}}(p)\neq\pm\lambda_{f_{E_2}}(p)$ and hence 
\[p(E_1,E_2)\ll N(\mathrm{Sym}^2f_{E_1}\otimes\mathrm{Sym}^2f_{E_2})^{\frac{1}{2}}(\log N_{E_1}N_{E_2})^{80+\varepsilon}.\]
By \Cref{thm-uncod} and \Cref{cE1E2}, we obtain
\[c(E_1\times E_2)\ll \max\{c(E_1),c(E_2), 4p(E_1,E_2)^{1/2}\}\ll  N(\mathrm{Sym}^2f_{E_1}\otimes\mathrm{Sym}^2f_{E_2})^{\frac{1}{4}}(\log N_{E_1}N_{E_2})^{40+\varepsilon}.\]
Finally, using
  \cite[Theorem 1]{BuHe1997}, we get 
  \[
  c(E_1\times E_2) \ll (N_{E_1}N_{E_2})^{\frac{3}{2}+\varepsilon}.
  \] 
This complete the proof of Theorem \ref{unconditional for two elliptic curves thm}.
\qed

\subsection{Proof of Theorem \ref{unconditional for two elliptic curves thm family}}\label{sec:unconditional for two elliptic curves thm family}

\begin{proof}
Fix a sufficiently small $\varepsilon>0$ and let $\delta$ be the constant in \Cref{unconditional for two elliptic curves thm family}. As the result holds for arbitrary small $\varepsilon>0$,  we will, unless otherwise necessary, treat any expression $\kappa\varepsilon$ as $\varepsilon$.  We denote by $\mathcal{E}_{zero}'(N)$ the subset of $\mathcal{E}'(N)$ such that  that  any  $E \not\in \mathcal{E}_{zero}'(N)$ satisfies  the property that both $L(s, \mathrm{Sym}^2(f_{E}))$ and $L(s, \mathrm{Sym}^4(f_{E}))$ have  no zero in $\mathcal{R}(\sigma, T)$ with $\sigma = 1-\frac{1-\frac{1}{2\delta}-\varepsilon}{4+\frac{922.5}{\delta}}$ and $T=N$. By \Cref{cor:final_ZDE}, 
\begin{equation}\label{eq:complem-set}
    |\mathcal{E}_{zero}'(N)|\ll  |\mathcal{E}'(N)|^{\frac{1}{2\delta}+(4+\frac{922.5}{\delta})(1-\sigma)+\frac{\varepsilon}{5}}\ll_\varepsilon |\mathcal{E}'(N)|^{1-\varepsilon}.
\end{equation}

For each $E_1\notin \mathcal{E}_{zero}'(N)$, denote by 
\begin{align*}
\mathcal{M}(f_{E_1};N,\varepsilon) & :=\\
& \hspace{-15pt} \{E_2 \in\mathcal{E}'(N)\colon \textup{$|a_p(E_2)|=|a_p(E_1)|$ for all  $p\nmid N_{E_1}N_{E_2}$ and $p\leq \max\{\log N_{E_1}, \log N_{E_2}\}^{c(\delta)+\varepsilon}$}\}.
\end{align*}
Note that   for each  pair  $(E_1, E_2)$ such that $E_1\notin \mathcal{E}_{zero}'(N)$ and $E_2\notin \mathcal{M}(f_{E_1};N,\varepsilon)$, we have 
\[
p(E_1, E_2)\leq (\max\{\log N_{E_1}, \log N_{E_2}\})^{c(\delta)+\varepsilon}. 
\]
Consequently, by  Lemma \ref{cE1E2}, we  obtain
\[
c(E_1\times E_2)\leq \max\{c(E_1), c(E_2), 4(\max\{\log N_{E_1}, \log N_{E_2}\})^{c(\delta)/2+\varepsilon}\}.
\]
Applying Theorem \ref{thm. multi} to the set $\mathcal{M}(f_{E_1};N,\varepsilon)$ and invoking \eqref{eq:complem-set}, we obtain that the cardinality of following complementary set 
\[\{(E_1,E_2)\in \mathcal{E}'(N)\times \mathcal{E}'(N): \mbox{ $E_1\in \mathcal{E}_{zero}'(N)$ or $E_2\in \mathcal{M}(f_{E_1};N,\varepsilon)$ }\}\]
is at most
\[
\left|\left(\bigcup_{E_1\in \mathcal{E}'_{zero}(N)} \mathcal{E}'(N)\right) \bigcup \left(\bigcup_{E_1\notin \mathcal{E}'_{zero}(N)} \mathcal{M}(f_{E_1};N,\varepsilon)\right) \right| \ll_\varepsilon |\mathcal{E}'(N)|\cdot |\mathcal{E}'(N)|^{1-\frac{4}{5}\varepsilon}
=o(|\mathcal{E}'(N)|^{2}).
\]
The first limit result follows from this. 

If $\mathcal{E}'(N)\subseteq \mathcal{E}^{ss}(N)$,  we recall from \eqref{eq:family-size}  that  $|\mathcal{E}'(N)|\gg N^{5/6}$ and $c(E)\leq 11$ for each $E\in \mathcal{E}^{ss}(N)$. The second limit result now follows.
\end{proof}




\begin{appendix}\label{app:app}

\section{The number of CM elliptic curves  ordered by conductor}
In this appendix, we will give an upper bound for the number of $\Q$-isogenous classes of CM elliptic curves over $\Q$, when ordered by conductor.\footnote{For the number of CM elliptic curves ordered by naive height, see \cite[Theorem 1.3]{BaCa2025}.} We were unable to find a proof in the literature. Therefore, we include it in the appendix. We will keep the notation from earlier sections.

\begin{theorem}\label{app:thm}
  Let $\mathcal{E}^{CM}(N)$ be the set of $\Q$-isogeny classes of CM elliptic curves over $\Q$ with conductor bounded by $N$. Then, 
  \[
  |\mathcal{E}^{CM}(N)|\ll N^{\frac{1}{2}}.
  \]
\end{theorem}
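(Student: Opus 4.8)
The plan is to exploit the classical fact that every elliptic curve over $\Q$ with complex multiplication has $j$-invariant in the fixed finite set $\{j_1,\dots,j_{13}\}$ of rational CM $j$-invariants, one for each imaginary quadratic order of class number one. Since distinct $\Q$-isogeny classes are pairwise disjoint and each contains at least one $\Q$-isomorphism class, it suffices to bound, for each $i$ separately, the number of $\Q$-isomorphism classes of $E/\Q$ with $j(E)=j_i$ and $N_E\le N$, and then sum these thirteen contributions.

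For the eleven indices $i$ with $j_i\notin\{0,1728\}$ the estimate should be soft. Fix a model $E_i/\Q$ with conductor $N_i$. Since $\Aut_{\ol{\Q}}(E_i)=\{\pm1\}$, every $E/\Q$ with $j(E)=j_i$ is the quadratic twist $E_i^{(d)}$ for a unique squarefree integer $d$. For any prime $p\nmid 6N_i$ dividing $d$, the curve $E_i$ has good reduction at $p$ whereas $E_i^{(d)}$ has additive reduction there, so $v_p(N_{E_i^{(d)}})=2$; as $d$ is squarefree this yields $N_{E_i^{(d)}}\ge\big(|d|/\rad(6N_i)\big)^{2}\gg_{E_i}d^{2}$. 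Hence $N_{E_i^{(d)}}\le N$ forces $|d|\ll_{E_i}N^{1/2}$, giving $\ll_{E_i}N^{1/2}$ curves.

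The cases $j_i\in\{0,1728\}$ are, I expect, the main obstacle, because there $\Aut_{\ol{\Q}}(E_i)$ equals $\mu_6$ or $\mu_4$ and the conductor is far less sensitive to the twist. Every $E/\Q$ with such a $j$-invariant can be written uniquely as $y^2=x^3+a$ (for $j_i=0$) or $y^2=x^3+ax$ (for $j_i=1728$) with $a$ a sixth-power-free, respectively fourth-power-free, integer; write $a=\pm2^{\alpha}3^{\beta}m$ with $\gcd(m,6)=1$. At every prime $p\ge5$ dividing $m$ the curve has additive, potentially good reduction, so $v_p(N_E)=2$, while Tate's algorithm applied to the finitely many reduction types occurring at $2$ and $3$ for these two $j$-invariants shows $v_2(N_E),v_3(N_E)=O(1)$. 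Therefore $N_E\gg\rad(m)^{2}$, so $\rad(m)\ll N^{1/2}$; and for a fixed squarefree $r$ coprime to $6$ the admissible $a$ with $\rad(m)=r$ number $\ll 5^{\omega(r)}$, since each exponent $v_p(m)$ lies in a bounded set and $(\alpha,\beta)$ together with the sign give $O(1)$ choices. Summing over $r\ll N^{1/2}$ and using $\sum_{r\le x}5^{\omega(r)}\ll x(\log x)^{4}$, the number of such curves is $\ll N^{1/2}(\log N)^{4}$.

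Combining the three steps gives $|\mathcal{E}^{CM}(N)|\ll_{\varepsilon}N^{1/2+\varepsilon}$, which is all the application needs: it makes the CM curves negligible in every family $\mathcal{E}'(N)$ with $|\mathcal{E}'(N)|\gg N^{\delta}$, $\delta>\tfrac12$, that appears in the paper. The step requiring the most care, and what I would expect to be the crux, is the treatment of the two exceptional $j$-invariants $0$ and $1728$: there the conductor only detects the radical of the twisting parameter rather than the parameter itself, so one must count higher-power-free integers with a prescribed radical, which is exactly what produces the divisor-type factor $5^{\omega(r)}$; removing the resulting logarithmic loss to reach the clean bound $N^{1/2}$ would call for a finer parametrization of the sextic and quartic twists at these two $j$-invariants.
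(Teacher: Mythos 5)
Your argument is correct and shares the paper's skeleton: reduce to the thirteen rational CM $j$-invariants, observe that all curves with a fixed $j$-invariant are twists of one another, and show that the conductor grows quadratically in (a radical of) the twisting parameter, so that $N_E\le N$ forces the twist to lie in a set of size $\approx N^{1/2}$. Where you diverge is in the execution. The paper treats all three twist families ($\Aut=\mu_2,\mu_4,\mu_6$) uniformly by passing to the associated weight-$2$ newform $f_{E_i}$ and invoking the twist-conductor formula $N_{E_i^\chi}=N_{E_i}D_\chi^2$ for $(D_\chi,N_{E_i})=1$ (citing Duke--Kowalski), thereby reducing the count to primitive Galois characters $\chi$ of order dividing $2,4,6$ with conductor $\ll N^{1/2}$; it also quotes Kenku's theorem that each $\Q$-isogeny class contains at most $8$ curves, which you avoid by bounding isomorphism classes from above directly. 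You instead handle $j\notin\{0,1728\}$ with the same quadratic-twist conductor computation done by hand, and handle the two exceptional $j$-invariants by the fully explicit parametrization $y^2=x^3+a$ (resp.\ $y^2=x^3+ax$) with $a$ sixth- (resp.\ fourth-) power-free, converting the count into a divisor-type sum over squarefree radicals $r\le N^{1/2}$ weighted by $5^{\omega(r)}$ (resp.\ $3^{\omega(r)}$). This is more elementary and self-contained (no appeal to modular forms), at the cost of a logarithmic factor: you arrive at $N^{1/2}(\log N)^{O(1)}\ll N^{1/2+\varepsilon}$ rather than the paper's stated $N^{1/2}$. As you correctly note, $N^{1/2+\varepsilon}$ is all the application (negligibility relative to families of size $\gg N^\delta$, $\delta>1/2$) requires, so the log loss is harmless. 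One small remark: the paper's display \eqref{app:condu-bound} appears to contain a typo --- from $N_{E_i}D_\chi^2\le N$ one should conclude $D_\chi\ll N^{1/2}$, not $D_\chi^2\ll N^{1/2}$ --- and the count of primitive characters of order dividing $4$ or $6$ with conductor up to $X$ is itself of size $\asymp X(\log X)^{O(1)}$, so your bound is in fact of the same quality as what the paper's argument genuinely delivers.
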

\begin{proof}
We first recall that  there are only 13  CM $j$-invariants taking values in $\Q$. For each such rational number $j$, we fix a representative $E$ for the $\ol{\Q}$-isomorphism class corresponding to $j$ such that $E$ has the minimal conductor. We denote by $E_0$ (resp. $E_{1728}$) for the representative with $j$-invariant 0 (resp. 1728), and by $E_1, \ldots, E_{11}$ for the rest of the $j$-invariants.
Now by \cite[III.10 Theorem 10.1  and X.2.  Theorem 2.2]{Si2009}, any $\Q$-isomorphism class of  CM elliptic curves over $\Q$ is in one of the following sets:
\begin{align}\label{eq: cm}
  & \{E_i^{\chi}: \ \chi: \Gal(\ol{\Q}/\Q)\to \Z/2\Z,  \text{ $\chi$ is primitive}, 1\leq i\leq 11\}; \\ 
& \{E_{1728}^{\chi}:  \chi: \Gal(\ol{\Q}/\Q)\to \Z/4\Z, \text{ $\chi$ is primitive}\}; \\ 
  & \{E_{0}^{ \chi}:  \chi: \Gal(\ol{\Q}/\Q)\to \Z/6\Z, \text{ $\chi$ is primitive}\}. 
\end{align}
Here, for each elliptic curve  $E$ and each Galois character  $\chi\in \Hom(\Gal(\ol{\Q}/\Q),\Aut(E))$, there is a unique twist $E^\chi \in H^1(\Gal(\ol{\Q}/\Q), \Aut(E))$  which  is defined explicitly by \cite[X.5 Proposition 5.4]{Si2009}. We say the character $\chi$ is primitive if $\chi$ factor through a cyclic Galois extension that gives rise  to a primitive Dirichlet character.

Since any $\Q$-isogeny class in  $\mathcal{E}^{CM}(N)$  contains  at most $8$ $\Q$-isomorphism classes of elliptic curves \cite{Ke1982}, in order to bound $|\mathcal{E}^{CM}(N)|$, it suffices to bound the number of $\Q$-isomorphism classes of  CM elliptic curves over $\Q$ listed by (A.1)-(A.3). The original counting problem  amounts to counting the number of  primitive characters that lie in one of the sets:
\begin{align}
    & \{\chi: \Gal(\ol{\Q}/\Q)\to \Z/2\Z \; :\: \chi \text{ is primitive, } N_{E_i^{\chi}} \leq N,  1\leq i\leq 11\};\\
   & \{\chi: \Gal(\ol{\Q}/\Q)\to \Z/4\Z \; :\: \chi \text{ is primitive, } N_{E_{1728}^{ \chi}}\leq N \}; \\
  & \{\chi: \Gal(\ol{\Q}/\Q)\to \Z/6\Z \; :\: \chi \text{ is primitive, } N_{E_{0}^{ \chi}}\leq N \}.
\end{align}

Finally, by identifying the above elliptic curves $E$ (resp. $E^\chi$) with a newform $f$ (resp. $f^\chi$) of weight 2 and applying \cite[Proposition 14.20]{DuKo2000}, we obtain 
\[
N_{E_i^{\chi}}=N_{E_i}D_\chi^2 \quad  \text{ if $(D_\chi, N_{E_i})=1$},\]
where $D_\chi$ is the conductor of $\chi$. Therefore, if $(D_\chi, N_{E_i})= 1$, then   
\begin{equation}\label{app:condu-bound}
   N_{E_i^{\chi}}\leq N \Longrightarrow  D_\chi^2 \ll N^{\frac{1}{2}}, \quad \text{ for each } i\in \{0, 1728, 1, \ldots, 11\}. 
\end{equation}
If $(D_\chi, N_{E_i})\neq 1$, then since the set  $\{E_i\}_{\substack{1\leq i\leq 11,\\ i=0, 1728}}$ is finite and $\chi$ is primitive, the bound \eqref{app:condu-bound} still holds. 
Therefore, these sets  (A.4)-(A.6) are  all bounded by $O(N^{\frac{1}{2}})$.
This gives the desired bound of $ |\mathcal{E}^{CM}(N)|$.

\end{proof}
\end{appendix}
\bibliographystyle{abbrv}
\bibliography{EOIT.bib}

\end{document}